\newcommand{\R}{{\mathbb{R}}}
\newcommand{\N}{{\mathbb{N}}}
\newcommand{\vp}{{\varphi}}
\newcommand{\loc}{{\text{loc}}}
\def\d{{\,{\rm d}}}
\newcommand{\dx}{{\,{\rm d}x}}
\newcommand{\dt}{{\,{\rm d}t}}
\newcommand{\ds}{{\,{\rm d}s}}
\newcommand{\dmu}{{\,{\rm d}\mu}}
\newcommand{\dnu}{{\,{\rm d}\nu}}
\renewcommand{\Tilde}{\widetilde}
\newcommand{\coloneqq}{=}
\newcommand{\eqqcolon}{=}
\renewcommand{\div}{\operatorname{div}}
\newcommand{\cH}{{\mathcal{H}}}
\newcommand{\cA}{\mathcal{A}}
\newcommand{{\SP}}{{\mathcal{Z}}}
\DeclareMathOperator{\supp}{supp}
\DeclareMathOperator*{\osc}{osc}
\DeclareMathOperator{\dist}{dist}
\DeclareMathOperator{\diam}{diam}
\DeclareMathOperator*{\esssup}{ess\,sup}
\newcounter{cte}
\definecolor{darkgreen}{rgb}{0.00, 0.50, 0.00}
\definecolor{mahogany}{rgb}{0.75, 0.25, 0.00}
\def\Xint#1{\mathchoice
   {\XXint\displaystyle\textstyle{#1}}%
   {\XXint\textstyle\scriptstyle{#1}}%
   {\XXint\scriptstyle\scriptscriptstyle{#1}}%
   {\XXint\scriptscriptstyle\scriptscriptstyle{#1}}%
   \!\int}
\def\XXint#1#2#3{{\setbox0=\hbox{$#1{#2#3}{\int}$}
     \vcenter{\hbox{$#2#3$}}\kern-.5\wd0}}
\def\dashint{\Xint-}
\def\Cz{{C}}
\def\YYint#1#2#3{{\setbox0=\hbox{$#1{#2#3}{\iint}$}
    \vcenter{\hbox{$#2#3$}}\kern-.51\wd0}}
\theoremstyle{plain}
\newtheorem{proposition}{Proposition}[section]
\newtheorem{lemma}[proposition]{Lemma}
\newtheorem{theorem}{Theorem}
\newtheorem{corollary}[proposition]{Corollary}
\theoremstyle{remark}
\newtheorem{remark}[proposition]{Remark}
\theoremstyle{definition}
\newtheorem{definition}{Definition}[section]
\crefname{theorem}{Theorem}{Theorems}
\Crefname{theorem}{Theorem}{Theorems}
\crefname{lemma}{Lemma}{Lemmas}
\Crefname{lemma}{Lemma}{Lemmas}
\crefname{proposition}{Proposition}{Propositions}
\Crefname{proposition}{Proposition}{Propositions}
\crefname{corollary}{Corollary}{Corollaries}
\Crefname{corollary}{Corollary}{Corollaries}
\crefname{definition}{Definition}{Definitions}
\Crefname{definition}{Definition}{Definitions}
\crefname{remark}{Remark}{Remarks}
\Crefname{remark}{Remark}{Remarks}
\crefname{example}{Example}{Examples}
\Crefname{example}{Example}{Examples}
\numberwithin{equation}{section}
\begin{document}

%\title{Carleson-type removability for equations of parabolic $p$-Laplace type}
\title{Carleson-type removability for $p$-parabolic equations}

\author[M.~Borowski]{Michał Borowski}
\address{University of Warsaw, ul.~Banacha 2, 02-097 Warsaw, Poland}
\email{m.borowski@mimuw.edu.pl}

\author[T.~Elenius]{Theo Elenius}
\address{Department of Mathematics and Systems Analysis, Aalto University, P.O.~Box 11100, FI-00076 Aalto, Finland}
\email{theo.elenius@aalto.fi}

\author[L.~Schätzler]{Leah Schätzler}
\address{Department of Mathematics and Systems Analysis, Aalto University, P.O.~Box 11100, FI-00076 Aalto, Finland}
\email{ext-leah.schatzler@aalto.fi}

\author[D.~Stolnicki]{David Stolnicki}
\address{University of Warsaw, ul.~Banacha 2, 02-097 Warsaw, Poland}
\email{d.stolnicki-gonzalez@uw.edu.pl}

\subjclass[2020]{35K92 (35K65, 35B65, 35B60, 35R06)}

\keywords{Removable sets, parabolic $p$-Laplace equation, Hölder continuous solutions, equations involving measures}

%Alternative title suggestions:
%Characterization of removable sets to degenerate $p$-parabolic equations
%Removable sets for Hölder continuous solutions to equations of parabolic $p$-Laplace type
%A parabolic Carleson's Theorem for general p-type operators

\begin{abstract}
We characterize removable sets for Hölder continuous solutions to degenerate parabolic equations of $p$-growth. A sufficient and necessary condition for a set to be removable is given in terms of an intrinsic parabolic Hausdorff measure, which depends on the considered Hölder exponent. We present a new method to prove the sufficient condition, which relies only on fundamental properties of the obstacle problem and supersolutions, and applies to a general class of operators. For the necessity of the condition, we establish the Hölder continuity of solutions with measure data, provided the measure satisfies a suitable decay property. The techniques developed in this article provide a new point of view even in the case $p=2$.
\end{abstract}

\makeatother
\maketitle

\section{Introduction}
This article concerns removable sets for continuous weak solutions of quasilinear parabolic equations of $p$-growth in the degenerate case $p \geq 2$. We consider the equation    \begin{equation}\label{eq_main}
    \partial_t u-\div \cA(x,t,\nabla u)=0\,,
\end{equation}
where the operator $\cA$ is monotone and of $p$-growth with respect to the last variable; for the precise assumptions, see Section \ref{sec:weak-solutions}.
The prototype is the parabolic $p$-Laplace equation
$$
    \partial_t u - \div \big( |\nabla u|^{p-2} \nabla u \big) = 0\,.
$$
For an open set $\Omega \subset \R^n$, $0<T<\infty$, and $\Omega_T = \Omega \times (0,T)$, the aim is to characterize those sets $E \subset \Omega_T$ such that solutions to \eqref{eq_main} in $\Omega_T \setminus E$, with prescribed modulus of continuity up to \(E\), are in fact solutions in the whole of $\Omega_T$.
In particular, we are interested in removability in a class of Hölder continuous functions, under a condition on the size of the set $E$ in terms of a parabolic Hausdorff measure. In such a setting, sets $E$ of nonzero capacity can be removed.

To state our results, we recall the intrinsic scaling method for equations of type \eqref{eq_main}. Using this technique, an extensive regularity theory has been developed.
General references in this connection are the monographs \cite{DiBenedettoGianazzaVespri2012,DiBenedetto1993, Urbano}. The idea is to compensate for the lack of scaling in equation \eqref{eq_main} by using an intrinsic geometry. This is done by considering cylinders of the type 
\begin{equation*}
    Q_{r,\theta}(x_0,t_0) = B(x_0,r)\times\big(t_0-r^p\theta^{2-p},t_0+r^p\theta^{2-p}\big)\,,
\end{equation*}
where the parameter $\theta$ controls the size of the solution $u$ in the same cylinder. Oscillation estimates and Harnack inequalities for solutions hold in such intrinsic geometries related to the operator and the solution itself. As we are considering Hölder continuous functions, the oscillation of $u$ is of order $\approx\lambda r^\alpha$, where the parameter $\lambda$ corresponds to the Hölder coefficient of the solution in the intrinsic cylinder and $0<\alpha\leq 1$. As such, we are led to consider cylinders of the form
\begin{equation*}
    Q_{r,\lambda r^\alpha}(x_0,t_0)= B(x_0,r)\times\big(t_0-r^{\,p+\alpha(2-p)}\lambda^{2-p},\,t_0+r^{\,p+\alpha(2-p)}\lambda^{2-p}\big)\,.
\end{equation*}
We define the corresponding metric related to these cylinders by
\[
d_\alpha\big((x,t),(y,s)\big) \coloneqq \max\Big\{|x-y|,\ |t-s|^{\frac{1}{\,p+\alpha(2-p)\,}}\Big\},\qquad 0<\alpha\le1\,.
\]
Note that the exponent $p+\alpha(2-p)$ is a convex combination of $2$ and $p$.
For further discussion on these interpolative intrinsic geometries, we refer to~\cite{kuusi2014sharp}. The related Hausdorff measure is given by
\begin{equation}
    \label{eq: def Hausdorff measure}
    \cH^{\sigma}_{(\alpha)}(E)
= \lim_{\delta\downarrow 0}\ \inf\left\{\sum_{i=1}^\infty r_i^{\sigma}:\ 
E\subset \bigcup_{i=1}^\infty Q_{r_i,r_i^\alpha}(z_i),\ 0<r_i<\delta\right\},    
\end{equation}
for a set $E\subset\R^{n+1}$. In the case $\sigma=n$ and $\alpha=0$, the measure~\eqref{eq: def Hausdorff measure} reduces to the Hausdorff measure related to the $p$-parabolic capacity, see \cite{MoringScheven}.

We are now ready to state our first main result, which shows that the condition $\cH^{n+\alpha}_{(\alpha)}(E) = 0$ is sufficient for the removability of the set $E$ in the class of $\alpha$-Hölder continuous solutions. More precisely, we prove the following result.
\begin{theorem}
\label{thm:main}
    Let $E\subset\Omega_T$ be a closed set, let $\cA$ satisfy \eqref{as:growth} and \eqref{as:monotonicity-weaker}, and assume that $u\in \Cz(\Omega_T)$ is a weak solution of \eqref{eq_main} in $\Omega_T\backslash E$ such that 
    \begin{equation}
        \left| u(z)-u(z_0) \right|
        \leq
        Md^\alpha_\alpha(z,z_0)\,,
        \label{eq:Holder-condition-E}
    \end{equation}
    holds for every $z_0 \in E$ and $z \in \Omega_T$. If $\mathcal{H}^{n+\alpha}_{(\alpha)}(E) = 0$, then $u$ is a weak solution to \eqref{eq_main} in $\Omega_T$.
\end{theorem}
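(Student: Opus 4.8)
The plan is to show that $u$ is simultaneously a weak supersolution and a weak subsolution of \eqref{eq_main} in $\Omega_T$; since $u$ is continuous (hence locally bounded), this forces $u$ to be a weak solution. By the substitution $u\mapsto -u$, $\cA(x,t,\xi)\mapsto -\cA(x,t,-\xi)$ — which preserves \eqref{as:growth}, \eqref{as:monotonicity-weaker} and \eqref{eq:Holder-condition-E} — it suffices to prove that $u$ is a weak supersolution. For this I would fix an intrinsic cylinder $Q\Subset\Omega_T$ (after a routine reduction one may assume $E$ lies at positive distance from $\partial_{\mathrm p}Q$) and let $v$ be the solution of the obstacle problem in $Q$ with obstacle $\psi=u$ and lateral--initial datum $g=u$. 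By the basic theory of the parabolic obstacle problem for operators of $p$-growth, $v$ is continuous on $\overline{Q}$, satisfies $u\le v\le\sup_Q u$ and $v=u$ on $\partial_{\mathrm p}Q$, is a weak supersolution in $Q$, solves \eqref{eq_main} in the open set $D:=\{v>u\}$, and is the smallest weak supersolution in $Q$ lying above $u$ with boundary values $\ge u$. In particular $u$ is a weak supersolution in $Q$ if and only if $v=u$, and since such cylinders exhaust $\Omega_T$ the whole theorem reduces to proving $v=u$.

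Assume, for contradiction, that $D=\{v>u\}\neq\emptyset$. Since $\cH^{n+\alpha}_{(\alpha)}(E)=0$, for each small $\delta>0$ I would choose a cover $E\subset\bigcup_i Q_{r_i,r_i^\alpha}(z_i)$ with bounded overlap, $0<r_i<\delta$, $\sum_i r_i^{n+\alpha}\le\delta$, each cylinder meeting $E$, and then a cut-off $\eta=\eta_\delta\in C^\infty$, $0\le\eta\le1$, equal to $0$ on $\bigcup_i Q_{r_i,r_i^\alpha}(z_i)\supset E$ and to $1$ outside $\bigcup_i Q_{2r_i,(2r_i)^\alpha}(z_i)$, with $|\nabla\eta|\lesssim r_i^{-1}$ and $|\partial_t\eta|\lesssim r_i^{-(p+\alpha(2-p))}$ on the $i$-th transition region. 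Testing the variational inequality characterising $v$ against the competitor $\phi:=u\eta^2+v(1-\eta^2)$ — admissible since $v\ge u$ gives $\phi\ge u$ and $\eta\equiv1$ near $\partial_{\mathrm p}Q$ gives $\phi=u$ there — then subtracting the weak formulation of \eqref{eq_main} for $u$ tested against $(v-u)\eta^2$ (a legitimate test function, as $\supp\eta\cap E=\emptyset$), and using \eqref{as:monotonicity-weaker}, I expect an energy estimate of the form
\begin{equation*}
\sup_{s}\int_{Q\cap\{t=s\}}(v-u)^2\eta^2\dx\;\lesssim\;\iint_{\bigcup_i Q_{2r_i,(2r_i)^\alpha}(z_i)}\Big((v-u)^2|\partial_t\eta|+\big(|\cA(\cdot,\nabla v)|+|\cA(\cdot,\nabla u)|\big)(v-u)|\nabla\eta|\Big)\dx\dt .
\end{equation*}

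The right-hand side should then vanish as $\delta\to0$. Here one uses: the Hölder bound \eqref{eq:Holder-condition-E}, which gives $\osc_{Q_{2r_i,(2r_i)^\alpha}(z_i)}u\lesssim M r_i^\alpha$, together with a matching bound $\osc_{Q_{2r_i,(2r_i)^\alpha}(z_i)}(v-u)\lesssim M r_i^\alpha$ — or a suitable substitute — for the obstacle solution near $E$; the elementary inequality $p+\alpha(2-p)>\alpha$, valid for $p\ge2$ and $0<\alpha\le1$, which makes $\big|\bigcup_i Q_{2r_i,(2r_i)^\alpha}(z_i)\big|\lesssim\sum_i r_i^{\,n+p+\alpha(2-p)}\to0$ and $\sum_i r_i^{\,n+2\alpha}\to0$; and the $p$-integrability of $\nabla v$ (automatic, $v$ being a weak supersolution) and of $\nabla u$ in a neighbourhood of $E$. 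With $|\cA(\cdot,\xi)|\lesssim|\xi|^{p-1}$, Hölder's (or Young's) inequality, and bounded overlap, the terms containing $|\nabla v|^p$ and $|\nabla u|^p$ over $\bigcup_i Q_{2r_i,(2r_i)^\alpha}(z_i)$ tend to $0$ by absolute continuity of the integral, while the remaining contributions are $\lesssim (M^2+M^p)\sum_i r_i^{\,n+2\alpha}\to0$. Hence the right-hand side vanishes; since $\eta_\delta\to1$ almost everywhere ($E$ being Lebesgue null and the covers shrinking), Fatou's lemma yields $v(\cdot,s)=u(\cdot,s)$ a.e. for a.e. $s$, so $v=u$ a.e. and, by continuity, everywhere, contradicting $D\neq\emptyset$. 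This gives $v=u$ and completes the argument.

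Two steps are genuinely delicate and are where the ``fundamental properties of the obstacle problem and of supersolutions'' enter. First, the $p$-integrability of $\nabla u$ across $E$ must be \emph{derived}, not assumed: a crude Caccioppoli estimate on $\Omega_T\setminus E$ with a single subtracted constant diverges (one is left with $\sum_i r_i^{\,n+\alpha(2-p)}$, not necessarily finite), so one needs a scale-by-scale Caccioppoli argument with the subtracted constant adapted to each covering cylinder, the scheme closing precisely because the gauge exponent equals $n+\alpha$ and $\cH^{n+\alpha}_{(\alpha)}(E)=0$ — note that $E$ may still carry positive $p$-parabolic capacity, so no purely potential-theoretic removability is available. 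Second, and I expect this to be the main obstacle, one must control the oscillation of the \emph{obstacle solution} $v$ — not only of $u$ — on the cylinders meeting $E$; this is where the structure of the obstacle problem is used essentially (continuity of $v$, the fact that $v$ solves \eqref{eq_main} on $D$ with boundary values $u$ on $\partial_{\mathrm p}D$, minimality of $v$ among supersolutions above $u$, and the comparison principle).
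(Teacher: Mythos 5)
Your proposal and the paper share the starting point — compare $u$ with the obstacle solution $v=R^u$ in a cylinder $Q\Subset\Omega_T$, and use the sign flip $\cA\mapsto-\cA(\cdot,\cdot,-\cdot)$ to reduce the whole theorem to showing $v=u$ — but the routes diverge immediately after, and yours has a genuine gap precisely at the step you single out as ``the main obstacle.''

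Your energy argument needs, on each covering cylinder $Q_{2r_i,(2r_i)^\alpha}(z_i)$, a bound of the form $\osc\,(v-u)\lesssim r_i^\alpha$, i.e.\ an oscillation estimate for the \emph{obstacle solution} $v$ near $E$ at the scale of the cover. You acknowledge this and suggest it should follow from ``continuity of $v$, the fact that $v$ solves \eqref{eq_main} on $\{v>u\}$, minimality, and the comparison principle,'' but these do not produce the estimate. What would be needed is a Harnack-type propagation of the bound $\osc\,u\lesssim r^\alpha$ from the contact set $\{v=u\}$ to a full neighbourhood, and the paper argues at length that for $p>2$ this is exactly what breaks: the waiting time in the intrinsic Harnack inequality scales with the unknown size of $v$, so it cannot be absorbed by a fixed rescaling of the cylinder (unlike the elliptic case in \cite{KilpelainenZhongRemovability}, whose strategy your proposal closely mirrors, or $p=2$, or \cite{kuusi2014sharp} under an additional $C^1$-smoothness hypothesis on $\cA$ that is not assumed here). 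There is also a secondary gap you note but do not close: testing the equation for $u$ against $(v-u)\eta^2$ presupposes $\nabla u\in L^p_{\loc}$ across $E$, which is not given a priori; the scale-by-scale Caccioppoli idea you sketch would itself have to be carried out.

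The paper avoids the oscillation estimate for $v$ entirely. Instead of showing $v=u$ by an energy inequality, it works with the Riesz measure $\mu=\partial_t v-\div\cA(x,t,\nabla v)$: Lemma~\ref{lemma: general estimate for measure of supersol} bounds $\mu(Q_{r,r^\alpha})$ by powers of $\sup u$ for any bounded supersolution; Lemma~\ref{lem:meas-inq} shows that truncating a supersolution at a level $k$ only increases the Riesz measure on $\{u<k\}$; and Lemma~\ref{lemma: decay of measure} applies this with $k=\sup_Q\psi+r^\alpha$, so the \emph{truncated} supersolution $\min(v,k)-\inf_Q\psi$ has oscillation $\lesssim r^\alpha$ by the H\"older bound on the obstacle $u$ alone — no oscillation bound on $v$ is ever needed. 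This yields $\mu(Q_{r,r^\alpha}(z_0))\leq c r^{n+\alpha}$, which together with $\supp\mu\subset E$ and $\cH^{n+\alpha}_{(\alpha)}(E)=0$ forces $\mu\equiv0$; then $v$ solves \eqref{eq_main}, and uniqueness gives $v=u$. The truncation lemma is the decisive device your argument lacks; it also sidesteps the $\nabla u\in L^p$ issue because one only works with $v$, which is a supersolution in all of $Q$ and hence automatically in the parabolic Sobolev space.
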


The converse result to Theorem~\ref{thm:main} is also true under slightly stronger assumptions. First of all, we require strict monotonicity of the operator $\cA$ in Theorem~\ref{thm: necessity of Hausdorff zero}. Nonetheless, a broad class of operators is considered, including the $p$-Laplacian. The upper threshold $\kappa$ appearing in~\Cref{thm: necessity of Hausdorff zero} is related to the optimal local H\"older continuity of solutions to~\eqref{eq_main}. For smooth operators $\cA$, one can always take $\kappa = 1$, see \cite{KuusiMingione2013}.
To the best of our knowledge, Theorem~\ref{thm: necessity of Hausdorff zero} is new even in the case $p=2$.
\begin{theorem}
\label{thm: necessity of Hausdorff zero}
Let $\cA$ satisfy \eqref{as:growth} and \eqref{as:monotonicity-stronger}, let $\kappa \in (0, 1]$ be such that~\eqref{eq: kappa hölder cont of sols} holds, and let $0<\alpha<\kappa$. Let $E\subset\Omega_T$ be a closed set with $\mathcal{H}^{n+\alpha}_{(\alpha)}(E)>0$. Then, there exists a function $u$ which is locally $\alpha$-Hölder continuous in $\Omega_T$ with respect to $d_\alpha$, such that $u$ is a solution of \eqref{eq_main} in $\Omega_T\backslash E$, but not in $\Omega_T$.  
\end{theorem}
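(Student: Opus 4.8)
The plan is to realize the counterexample as the solution of a measure data problem whose right-hand side is a Frostman measure concentrated on $E$. I first record that
\[
Q_{r,r^\alpha}(x_0,t_0)=B(x_0,r)\times\bigl(t_0-r^{p+\alpha(2-p)},t_0+r^{p+\alpha(2-p)}\bigr)
\]
is precisely the ball of radius $r$ about $z_0=(x_0,t_0)$ in the metric $d_\alpha$, so that $\mathcal{H}^{n+\alpha}_{(\alpha)}$ coincides with the $(n+\alpha)$-dimensional Hausdorff measure of the complete separable metric space $(\R^{n+1},d_\alpha)$. Since $E\subset\Omega_T$ is closed with $\mathcal{H}^{n+\alpha}_{(\alpha)}(E)>0$, after localizing (some closed cylinder $\overline{Q_{\rho,\rho^\alpha}(z^*)}\Subset\Omega_T$ still carries positive measure) a Frostman-type lemma for Hausdorff measures on metric spaces yields a nonzero nonnegative Radon measure $\mu$ with $0<\mu(E)<\infty$, $\supp\mu\Subset\Omega_T$, and
\[
\mu\bigl(Q_{r,r^\alpha}(z)\bigr)\le C_0\,r^{n+\alpha}\qquad\text{for all }z\in\R^{n+1},\ r>0 .
\]

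Next I approximate $\mu$ by bounded densities $\mu_j$ in a way that preserves this decay \emph{uniformly}; a plain mollification would not, since the bound degrades at scales below the mollification parameter. Partitioning $\R^{n+1}$ into a grid of $d_\alpha$-cubes of side $2^{-j}$ and replacing $\mu$ on each cube by the constant with the same mass there produces $\mu_j\in L^\infty$ with $\mu_j\rightharpoonup\mu$, $\|\mu_j\|=\|\mu\|$, $\supp\mu_j$ in a fixed compact subset of $\Omega_T$ for large $j$, and $\mu_j(Q_{r,r^\alpha}(z))\le C_1 r^{n+\alpha}$ uniformly in $j$ — a short computation resting on the fact that the exponent $p-\alpha(p-1)$ which governs the distribution of mass at scales below $2^{-j}$ inside a grid cube is positive because $\alpha<\kappa\le1\le p-1$. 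For each $j$ I solve the Cauchy--Dirichlet problem for $\partial_t u_j-\div\cA(x,t,\nabla u_j)=\mu_j$ with zero boundary data on an exhaustion of $\Omega_T$ by subcylinders. By the interior $\alpha$-Hölder estimate for solutions of \eqref{eq_main} with measure data obeying the above decay — which needs $\alpha<\kappa$, since the Hölder exponent so produced cannot exceed the exponent $\kappa$ of the homogeneous equation in \eqref{eq: kappa hölder cont of sols} — the $u_j$ are locally $\alpha$-Hölder continuous with respect to $d_\alpha$, uniformly in $j$ and in the exhausting cylinder. A diagonal argument combining the Arzel\`a--Ascoli theorem with the a.e.\ convergence of the gradients supplied by the measure data theory (where strict monotonicity \eqref{as:monotonicity-stronger} enters) yields a function $u$, defined and locally $\alpha$-Hölder continuous on $\Omega_T$, solving $\partial_t u-\div\cA(x,t,\nabla u)=\mu$ in the distributional sense on $\Omega_T$.

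Finally, since $\supp\mu\subseteq E$, on the open set $\Omega_T\setminus E$ the right-hand side vanishes, so by standard regularity the locally bounded $u$ is a genuine weak solution of \eqref{eq_main} there. If $u$ were in addition a weak solution of \eqref{eq_main} in all of $\Omega_T$, then $\mu$ would vanish as a distribution on $\Omega_T$, contradicting $\mu\neq0$; hence $u$ has exactly the asserted properties. I expect the principal difficulty of the whole argument to lie in the $\alpha$-Hölder estimate for measure data solutions — an intrinsic-scaling De Giorgi / nonlinear potential iteration that must absorb the measure term, carried out separately in the paper — while within the present proof the delicate points are the Frostman lemma for the intrinsic Hausdorff measure and, above all, engineering the regularizations $\mu_j$ so that the decay is preserved uniformly, which is what makes the limit $u$ retain the required modulus of continuity.
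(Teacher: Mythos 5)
Your proposal follows the same overall strategy as the paper: localize, apply a Frostman-type lemma for the metric $d_\alpha$ to produce a nonzero measure $\mu$ supported in $E$ with decay $\mu(Q_{r,r^\alpha})\lesssim r^{n+\alpha}$, invoke an interior $\alpha$-Hölder estimate for measure-data solutions (the technical heart, which you correctly identify as the hard part and defer), and conclude by noting $\mu\not\equiv0$. The main divergence is in how the solution $u$ is obtained from $\mu$. The paper simply invokes the existence theory for $\cA$-superparabolic solutions of measure-data problems due to Kinnunen, Lukkari and Parviainen (\Cref{lemma:exis_measure}); this produces $u$ in one step, together with the fact that $u$ is a genuine weak solution in $\Omega_T\setminus\supp\mu$, which you instead only gesture at via ``standard regularity.'' You instead re-derive the existence by hand, regularizing $\mu$ to bounded densities $\mu_j$ and passing to the limit via Arzelà--Ascoli plus a.e.\ gradient convergence. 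This works in principle but adds two burdens: one must verify that the mean-oscillation hypothesis \eqref{eq:omegaest} in \Cref{thm: measure condition implies hölder cont} holds uniformly in $j$ for the approximating solutions before the Hölder estimate can be applied uniformly, and one must re-prove the gradient-convergence statement that the superparabolic theory already supplies. Neither of these is addressed, and both are avoided by the paper's route. A minor factual point: your motivation for grid-averaging, namely that ``a plain mollification would not'' preserve the decay at scales below the mollification parameter, is incorrect as long as the mollification is anisotropic and adapted to $d_\alpha$. The same computation you indicate shows that for $r\le\varepsilon$ one has $\mu_\varepsilon(Q_{r,r^\alpha})\le\|\mu_\varepsilon\|_\infty\,|Q_{r,r^\alpha}|\lesssim(r/\varepsilon)^{p-\alpha(p-1)}\,r^{n+\alpha}\le C\,r^{n+\alpha}$, since $p-\alpha(p-1)>0$; so a $d_\alpha$-adapted mollification works just as well, making the grid construction an unnecessary complication.
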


The removability of singularities of H\"older continuous solutions to PDEs was first treated in the case of harmonic functions in the seminal paper~\cite{Carleson} by Carleson.
For general elliptic equations of $p$-Laplace type, Kilpeläinen and Zhong \cite{KilpelainenZhongRemovability} provide both sufficient and necessary conditions for a set to be removable.
In particular, in the case of $\alpha$-H\"older continuous solutions, $0<\alpha<1$, the authors give a full characterization of removable sets for the prototype $p$-Laplacian.
Indeed, they conclude that in this case, $E$ is removable if and only if its $(n-p+\alpha(p-1))$-dimensional Hausdorff measure is zero.
The case of Lipschitz continuous solutions is essentially different, see~\cite{DavidMattila, NazarovTolsaVolberg,mateuPratTolsa}.
In the elliptic setting, generalizations in several directions are available, see \cite{Ono} for problems including lower-order terms, and \cite{Iwonka1, Iwonka2} for operators with Orlicz growth.

In the parabolic setting, Kuusi, Mingione and Nyström \cite{kuusi2014sharp} consider autonomous equations of $p$-Laplacian type given by $\partial_t u-\cA(\nabla u)=0$, with $\cA\in C^1$.
For solutions that are continuous with a modulus of continuity $\omega(\cdot)$, the authors prove that sets with $\mathcal{H}^{\omega(\cdot)}(E)=0$ are removable, where the Hausdorff measure $\mathcal{H}^{\omega(\cdot)}$ is a generalization of \eqref{eq: def Hausdorff measure}.
In the Hölder case $\omega(r) = c r^\alpha$, the condition $\mathcal{H}^{n+\alpha}_{(\alpha)}(E) = 0$ in Theorem \ref{thm:main} is retrieved.

At this stage, let us indicate that our results are natural parabolic counterparts of the elliptic results in \cite{KilpelainenZhongRemovability}.
Theorem~\ref{thm:main} is analogous to \cite[Theorem 1.6]{KilpelainenZhongRemovability}, and Theorem~\ref{thm: necessity of Hausdorff zero} corresponds to~\cite[Theorem 1.10]{KilpelainenZhongRemovability}.
Thus, let us compare our proof strategy to the elliptic and parabolic benchmarks \cite{KilpelainenZhongRemovability} and \cite{kuusi2014sharp}.

We start with the sufficient condition concerning the size of the set $E$.
In \cite{KilpelainenZhongRemovability}, the overall strategy is the following:
Given a solution $u$ outside the set $E \subset \Omega$, consider a solution $v$ to the obstacle problem in the whole set $\Omega$, with obstacle function $u$. By Harnack's inequality, the oscillation of $v$ can be controlled via the oscillation of $u$ near the contact set. In turn, Caccioppoli estimates yield an upper bound for the Riesz measure $\mu$ of $v$, which, in conjunction with the assumption on the size of $E$, implies that $\mu(E) = 0$. On the other hand, the measure $\mu$ of the complement $\Omega \setminus E$ is also zero, as $u$ is a solution there. This leads to the conclusion that $\mu \equiv 0$ in $\Omega$, which guarantees that $v$ is a solution to the PDE.
Finally, establishing that $v$ coincides with $u$ shows that $u$ is a solution in the whole set $\Omega$. 

In the parabolic setting, using the Harnack inequality is more delicate due to the presence of the time lag. In \cite{kuusi2014sharp}, the stronger assumption on the operator allows reducing to the case $p=2$ when proving the oscillation estimate for the obstacle problem. In this case, the time lag in the Harnack inequality can be controlled up to a constant factor. This eventually gives the desired estimate of the oscillation of the solution $v$ of the obstacle problem in terms of the oscillation of the obstacle $u$.
In our setting, due to the generality of the operators in \eqref{eq_main}, the regularity of solutions is not sufficient to perform the reduction to $p=2$. Moreover, the time lag cannot be controlled by a constant scaling in the intrinsic Harnack inequality for degenerate equations, that is, for $p>2$. As such, proving the oscillation estimate in our setting seems to be a difficult problem of independent interest.

In \Cref{sec:sufficiency}, we develop an argument which directly shows the decay of the Riesz measure associated with the solution $v$ of the obstacle problem, while avoiding the oscillation estimate for $v$ itself. Our approach is based only on fundamental tools, in particular, a Caccioppoli inequality, truncation arguments, and elementary properties of the obstacle problem. The proof proceeds as follows. First, for any supersolution $w$ of \eqref{eq_main}, we estimate the Riesz measure associated with $w$ in terms of the oscillation of $w$. Next, by truncating $v$, the oscillation of the truncated function is controlled by the oscillation of the obstacle. Due to properties of the obstacle problem, this truncation only increases the associated Riesz measure, which in turn implies that the Riesz measure of $v$ can be estimated by the oscillation of the obstacle.

In \Cref{sec:necessity}, we discuss the necessity of the condition on the size of the singular set $E$.
The proof of \Cref{thm: necessity of Hausdorff zero} is based on constructing a nonzero measure $\mu$ supported on $E$, and a function $u$ satisfying 
\[
\partial_t u-\cA(x,t,\nabla u)=\mu \,,
\]
in the weak sense. The construction of the measure $\mu$ uses a parabolic Frostman Lemma, which also yields a suitable decay for the measure. The decay of the measure, in turn, ensures the H\"older continuity of the solution to the measure data problem. This is done by applying a suitable comparison estimate for solutions to measure data problems and a delicate intrinsic iteration technique. 

\section{Assumptions and preliminary results}
\label{sec:assumptions-preliminaries}
In this section, we give the precise setting and gather notions and results used throughout the paper. We start by introducing some notation.
\subsection{Notation}
For $0<t_1<t_2<\infty$ we denote 
\begin{equation*}
     \Omega_{t_1,t_2}\coloneqq \Omega\times(t_1,t_2)\,.
\end{equation*}
The parabolic boundary of $\Omega_{t_1,t_2}$ is 
\begin{equation*}
    \partial_\mathrm{par} \Omega_{t_1,t_2} = (\partial \Omega \times [t_1,t_2))\cup (\overline{\Omega}\times\{t_1\})\,.
\,\end{equation*}
We shall use the notation $\Omega_t$ for $\Omega \times [0, t)$. For $z_0=(x_0,t_0)\in\mathbb{R}^{n+1}$, $r>0$, and $\lambda>0$, intrinsic cylinders are defined as
\[
Q_{r,\lambda}(z_0)=B(x_0,r)\times\big(t_0-\lambda^{2-p}r^p,\; t_0+\lambda^{2-p}r^p\big)\,.
\]
If it is clear from the context, we omit $z_0$.
For any real-valued function $v$ defined on a set $U$, we shall denote
\begin{equation*}
    \osc_U v \coloneqq \sup_{U} v - \inf_{U} v\,.
\end{equation*}
Moreover, we shall use the notation $v_U$ the mean-value of $v$ on $U$, that is,
\begin{equation*}
    v_U \coloneqq \dashint_U v(x)\dx\,.
\end{equation*}
Throughout the paper, letters $C$ and $c$ denote positive constants that may differ from line to line. We shall indicate on what variables those constants depend on their first appearance within a given theorem or lemma.
\subsection{Structural assumptions}
\label{sec:weak-solutions}
We assume that $\cA:\Omega_T\times\mathbb{R}^n\to\mathbb{R}^n$ is a Carathéodory function, that is, $(x,t)\mapsto\cA(x,t,\xi)$ is measurable for every $\xi\in\R^n$ and $\xi\mapsto\cA(x,t,\xi)$ is continuous for a.e. \ $(x,t)\in\Omega_T$. For $p\geq 2$, we assume that for a.e.~$(x, t) \in \Omega_T$ and all $\xi \in \R^n$, $\cA$ satisfies the structural conditions 
\begin{equation}\tag{S1}\label{as:growth}
        \cA(x,t,\xi) \cdot \xi \geq \nu |\xi|^p
        \quad \text{and} \quad
        |\cA(x,t,\xi)| \leq L |\xi|^{p-1}
\end{equation}
with positive constants $0 < \nu \leq L$.
Moreover, we assume that $\mathcal{A}$ satisfies one of the following monotonicity assumptions
        \begin{equation}\tag{S2a}\label{as:monotonicity-weaker}
            \left(\cA(x,t,\xi)-\cA(x,t,\eta) \right) \cdot \left( \xi - \eta \right) > 0 \text{ for all $\xi$, $\eta \in \R^n$ with } \xi \neq \eta\,,\end{equation} 
        \begin{equation}\tag{S2b}\label{as:monotonicity-stronger}
            \left(\cA(x,t,\xi)-\cA(x,t,\eta) \right) \cdot \left( \xi - \eta \right) \geq \nu \lvert\xi-\eta\rvert^p \text{ for all $\xi$,$\eta\in \R^n\,,$}
        \end{equation}
for a.e.~$(x, t) \in \Omega_T$.
By default, we shall assume that $\cA$ satisfies~\eqref{as:monotonicity-weaker}, while we shall always indicate whenever imposing~the stronger conditon~\eqref{as:monotonicity-stronger}.

\subsection{Weak sub- and supersolutions}
\label{sec:weak sub- and supersolutions}
The parabolic Sobolev space $L^p( t_1,t_2;W^{1,p}( \Omega))$ consists of measurable functions $u:\Omega_{t_1,t_2}\to [-\infty,\infty]$, such that for almost every $t\in (t_1,t_2)$, we have $u(\cdot, t)\in W^{1,p}(\Omega)$ and 
\begin{equation*}
    \int_{t_1}^{t_2} \lVert u(\cdot, t) \rVert_{W^{1,p}( {\Omega})}^p \dt < \infty\,.
\end{equation*}
We say that $u \in L^p_{\loc}(t_1, t_2;W_{\loc}^{1, p}(\Omega))$, if for every $\Tilde \Omega_{s_1, s_2} \Subset \Omega_{t_1,t_2}$, we have $u \in L^p( s_1, s_2; W^{1, p}(\Tilde \Omega))$.

Now, we give the definition of weak sub- and supersolutions.
\begin{definition}\label{weak_sol}
Let $U$ be an open subset of $\R^{n+1}$. We say that $u$ is a weak solution to~\eqref{eq_main} in $U$ if for every $\Omega_{t_1, t_2} \Subset U$, we have $u \in L^p(t_1, t_2;W^{1, p}(\Omega))$, and for every $\varphi\in C_c^\infty(\Omega_{t_1, t_2})$, it holds
\begin{equation}
\label{eq: def of weak sol}
\int_{\Omega_{t_1,t_2}}\! -u\,\partial_t\varphi + \cA(x,t,\nabla u)\cdot \nabla\varphi \,\dx\dt=0\,.
\end{equation}
The function $u$ is a weak supersolution if the integral in \eqref{eq: def of weak sol} is $\ge 0$ for all non-negative $\varphi$, and a weak subsolution if the integral in \eqref{eq: def of weak sol} is nonpositive for all non-negative $\varphi$.
\end{definition}

We will need a basic existence result for the Cauchy-Dirichlet problem related to \eqref{eq_main} with continuous initial and boundary values in cylinders $Q$. For the following lemma, we refer to \cite[Lemma 3.2]{KL1996}. Similar statements have been used in \cite{KuusiMingione2014,kuusi2014sharp,KKS-Obstacle}.
\begin{lemma}
\label{lem:existence-Cauchy-Dirichlet}
Let $Q = B(z,r) \times (t_1,t_2) \subset \R^{n+1}$ be a cylinder, and assume that $g \in \Cz(\overline{Q})$.
Then, there exists a unique weak solution $u$ to \eqref{eq_main} in the sense of Definition \ref{weak_sol} with $u=g$ on $\partial_\mathrm{par}Q$.
Moreover, we have that $u \in \Cz(\overline{Q})$.
\end{lemma}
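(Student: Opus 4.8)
The plan is to proceed in three stages: a comparison principle that yields both uniqueness and the stability needed to reduce to regular boundary data, an existence result for smooth boundary data via monotone operator theory, and boundary regularity via DiBenedetto's intrinsic scaling theory together with barrier constructions.

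First I would establish the comparison principle: if $w$ is a weak subsolution and $v$ a weak supersolution of \eqref{eq_main} in $Q$ with $(w-v)_+ \in L^p(t_1,t_2;W^{1,p}_0(B(z,r)))$ and $(w-v)_+(\cdot,t) \to 0$ as $t \downarrow t_1$, then $w \le v$ a.e.\ in $Q$. The standard proof subtracts the two weak formulations and tests with a time-mollified truncation of $(w-v)_+$ on the intervals $(t_1,\tau)$ --- mollification in time is required since weak solutions are not a priori differentiable in $t$ --- and then discards the elliptic contribution using the monotonicity assumption \eqref{as:monotonicity-weaker}, leaving $\tfrac12 \int_{B(z,r)} (w-v)_+^2(\cdot,\tau)\,dx \le 0$. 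Uniqueness of the solution with boundary datum $g$ is the case $w = u_1$, $v = u_2$ and conversely. Since constants are solutions of \eqref{eq_main}, comparing $u_1$ with $u_2 \pm \norm{g_1 - g_2}_{L^\infty(\partial_\mathrm{par} Q)}$ gives $\norm{u_1 - u_2}_{L^\infty(Q)} \le \norm{g_1 - g_2}_{L^\infty(\partial_\mathrm{par} Q)}$, the stability I will use at the end.

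For existence I would first take $g$ to be the restriction to $\overline Q$ of a smooth function on $\R^{n+1}$ and seek $u = g + \wt u$ with $\wt u$ in the parabolic Sobolev space $L^p(t_1,t_2;W^{1,p}_0(B(z,r)))$ and vanishing at $t = t_1$. The operator $\wt u \mapsto -\div \cA(x,t,\nabla(g + \wt u)) + \partial_t g$ is bounded, coercive by the lower bound in \eqref{as:growth} combined with Young's inequality, and strictly monotone by \eqref{as:monotonicity-weaker}, so the classical existence theorem of Lions for parabolic problems driven by monotone operators applies; equivalently, one runs a Galerkin scheme, derives uniform energy estimates, extracts weak limits with the help of the Aubin--Lions lemma, and identifies the nonlinear term by Minty's monotonicity trick. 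This produces a weak solution with the prescribed initial and lateral traces.

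It then remains to upgrade to continuity on $\overline Q$ and to handle general $g$. Interior local Hölder continuity of $u$ is DiBenedetto's intrinsic scaling theorem, valid under \eqref{as:growth} and \eqref{as:monotonicity-weaker}; see \cite{DiBenedetto1993}. Continuity up to $\partial_\mathrm{par} Q$ follows from barriers: at a point of the lateral boundary $\partial B(z,r) \times (t_1,t_2)$ the exterior ball condition of $B(z,r)$ supplies super- and subsolutions that pinch $u$ to the boundary value, and at a point of the bottom $\overline{B(z,r)} \times \{t_1\}$ a heat-type barrier adapted to $p$-growth does the same, using that $Q$ lies in $\{t > t_1\}$; together with the uniform continuity of $g$ this yields $u \in \Cz(\overline Q)$ with $u = g$ on $\partial_\mathrm{par} Q$. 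For a general $g \in \Cz(\overline Q)$, pick smooth $g_j \to g$ uniformly on $\overline Q$; the comparison principle gives $\norm{u_j - u_k}_{L^\infty(Q)} \le \norm{g_j - g_k}_{L^\infty(\overline Q)}$, so $u_j \to u$ uniformly with $u \in \Cz(\overline Q)$ and $u = g$ on $\partial_\mathrm{par} Q$, while uniform Caccioppoli estimates give $\nabla u_j \rightharpoonup \nabla u$ in $L^p_\loc$ and a last Minty argument identifies $\cA(x,t,\nabla u_j) \rightharpoonup \cA(x,t,\nabla u)$, so that $u$ is indeed a weak solution of \eqref{eq_main}. The routine parts are the comparison principle, modulo the standard time-mollification technicalities, and the abstract existence; the genuine obstacle is the boundary regularity on the lateral part of $\partial_\mathrm{par} Q$, namely constructing barriers compatible with the degenerate $p$-growth and the intrinsic geometry, together with the care needed to pass to the limit in $\cA(x,t,\nabla u_j)$ using only weak convergence of the gradients.
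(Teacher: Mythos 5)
The paper does not prove this lemma; it is quoted verbatim from Kilpeläinen--Lindqvist \cite[Lemma~3.2]{KL1996}, with further pointers to the literature where it has been used. Your sketch --- comparison principle via time-mollified testing of the difference of the weak formulations and the monotonicity hypothesis, existence for smooth boundary data by Lions/Galerkin with a Minty identification, interior H\"older continuity from DiBenedetto's intrinsic scaling, continuity up to $\partial_\mathrm{par}Q$ via barriers (exterior ball condition on the lateral boundary, a heat-type barrier at the initial time), and finally uniform approximation of a general $g\in C(\overline Q)$ together with the $L^\infty$-contraction in the boundary data --- is exactly the standard route by which this result is established and reflects the content of the cited reference; there is no genuinely different decomposition at play, only the difference between citing the statement and reconstructing its proof. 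Two small points worth being aware of: for the passage to the limit $u_j\to u$ with general $g$, the paper's own \Cref{lem:sequences-supersolutions} gives strong $L^q_\mathrm{loc}$ convergence of gradients for $q<p$ (applicable to bounded solutions, which are both sub- and supersolutions), which is a slightly more direct tool than a fresh Minty argument; and the interior H\"older continuity does not actually use the monotonicity \eqref{as:monotonicity-weaker}, only the growth bounds \eqref{as:growth}, so the structural hypotheses you invoke there are a little more than needed.
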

We have the following standard comparison principle, see \cite[Lemma 3.5]{kortekuusiparviainen2010-superparabolic}.
\begin{lemma}
\label{lem:standard-comparison}
Let $\Omega_{t_1,t_2} \subset \R^{n+1}$ be open, and assume that $u$ is a weak supersolution and $v$ a weak subsolution to~\eqref{eq_main} in $\Omega_{t_1,t_2}$, in the sense of Definition \ref{weak_sol}.
If $u$ and $-v$ are lower semicontinuous on $\overline{\Omega_{t_1,t_2}}$, and $v \leq u$ on $\partial_\mathrm{par}\Omega_{t_1,t_2}$, then $v \leq u$ a.e.~in $Q$.
\end{lemma}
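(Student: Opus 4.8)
The plan is a standard energy argument, using Steklov time-averages to handle the time derivative rigorously and the monotonicity of $\cA$ to control the diffusion term. Since the conclusion is local in time and unaffected by shrinking the spatial domain, I would fix $\tau\in(t_1,t_2)$, replace $\Omega$ by $\Omega\cap B$ for a large ball $B$ (so that we may assume $\Omega$ bounded), and aim to show $v\le u$ a.e.\ in $\Omega\times(t_1,\tau)$. Write $w=v-u$. The first step is to rewrite the weak formulation of Definition~\ref{weak_sol} for the supersolution $u$ and the subsolution $v$ in Steklov-averaged form: for a.e.\ $t\in(t_1,\tau)$ and every nonnegative $\psi\in W_0^{1,p}(\Omega)$,
\[
\int_\Omega \partial_t[u]_h\,\psi+[\cA(\cdot,t,\nabla u)]_h\cdot\nabla\psi\dx\ge0,
\qquad
\int_\Omega \partial_t[v]_h\,\psi+[\cA(\cdot,t,\nabla v)]_h\cdot\nabla\psi\dx\le0,
\]
where $[\,\cdot\,]_h$ denotes the Steklov average in time; subtracting these gives one inequality for $[w]_h$, valid for all nonnegative $\psi$.

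The key point is the admissibility of the test function built from the positive part of $w$. Because $u$ and $-v$ are lower semicontinuous on $\overline{\Omega_{t_1,t_2}}$ and $v\le u$ on $\partial_{\mathrm{par}}\Omega_{t_1,t_2}$, the difference $w$ is upper semicontinuous and nonpositive on $\partial_{\mathrm{par}}\Omega_{t_1,t_2}$. Hence, for each $\epsilon>0$, the set $\{w\ge\epsilon\}\cap(\overline\Omega\times[t_1,\tau])$ is compact and disjoint from the parabolic boundary, so it lies at positive distance from $\partial\Omega\times[t_1,\tau]$ and from $\overline\Omega\times\{t_1\}$. Consequently $(w-\epsilon)_+$ lies (slicewise) in $W_0^{1,p}(\Omega)$ and vanishes identically for $t$ near $t_1$, and the same holds for $([w]_h-\epsilon)_+$ once $h$ is small. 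Testing the subtracted inequality with $\psi=([w]_h-\epsilon)_+$ and integrating over $(t_1,\tau)$, the parabolic term integrates to $\tfrac12\int_\Omega(([w]_h(\cdot,\tau)-\epsilon)_+)^2\dx$, since the contribution at $t_1$ vanishes for small $h$, while the diffusion term equals $\int_{t_1}^\tau\!\int_\Omega([\cA(\cdot,\nabla v)]_h-[\cA(\cdot,\nabla u)]_h)\cdot\nabla([w]_h-\epsilon)_+\dx\dt$.

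Next I would let $h\downarrow0$. Standard convergence properties of Steklov averages, together with $\nabla u,\nabla v\in L^p_{\loc}$ and the growth bound~\eqref{as:growth} (which places $\cA(\cdot,\nabla u),\cA(\cdot,\nabla v)$ in $L^{p'}_{\loc}$), show that the diffusion term converges to
\[
\int_{t_1}^\tau\!\int_{\{v>u\}}\big(\cA(x,t,\nabla v)-\cA(x,t,\nabla u)\big)\cdot(\nabla v-\nabla u)\dx\dt\ \ge\ 0,
\]
where nonnegativity is exactly the monotonicity hypothesis~\eqref{as:monotonicity-weaker} (only its non-strict form is used). Passing to the limit and then letting $\epsilon\downarrow0$ yields $\tfrac12\int_\Omega((w(\cdot,\tau))_+)^2\dx\le0$, hence $(w(\cdot,\tau))_+=0$ a.e.; since $\tau\in(t_1,t_2)$ was arbitrary, $v\le u$ a.e.\ in $\Omega_{t_1,t_2}$.

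I expect the only real difficulty to be the boundary bookkeeping: verifying that the Steklov-averaged inequalities are valid up to $t=t_1$, that $([w]_h-\epsilon)_+$ is a genuine test function at a.e.\ time slice (which is precisely where the semicontinuity of $u,-v$ and the condition $v\le u$ on $\partial_{\mathrm{par}}\Omega_{t_1,t_2}$ enter, through the $\epsilon$-truncation and approximation of $W_0^{1,p}$-functions), and that the initial term drops out in the limit. The interior estimates and the passage $h\to0$ are routine given~\eqref{as:growth} and the local integrability of $\nabla u,\nabla v$.
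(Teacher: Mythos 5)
The paper does not prove this lemma; it cites it from the literature (Korte--Kuusi--Parviainen, as indicated). Your blind proof proposal is essentially the standard energy argument that one expects behind that citation: Steklov-average the weak formulations, subtract, test with $([w]_h-\epsilon)_+$ for $w=v-u$, use monotonicity~\eqref{as:monotonicity-weaker} to discard the diffusion term, and let $h\downarrow 0$, $\epsilon\downarrow 0$. Conceptually this is the right approach, and the use of upper semicontinuity of $w$ together with $w\le 0$ on $\partial_{\mathrm{par}}\Omega_{t_1,t_2}$ to place the positivity set $\{w\ge\epsilon\}$ at positive distance from the parabolic boundary is exactly the right way to make $([w]_h-\epsilon)_+$ an admissible test function vanishing near $t=t_1$.

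There is, however, one step that does not work as stated: the opening reduction \emph{``replace $\Omega$ by $\Omega\cap B$ for a large ball $B$ (so that we may assume $\Omega$ bounded).''} This shrinking changes the parabolic boundary: $\partial_{\mathrm{par}}(\Omega\cap B)_{t_1,t_2}$ now contains $(\partial B\cap\Omega)\times[t_1,t_2)$, a set interior to the original cylinder, and the hypothesis $v\le u$ on $\partial_{\mathrm{par}}\Omega_{t_1,t_2}$ gives you no information there. Consequently $w$ need not be nonpositive on the new lateral boundary, the set $\{w\ge\epsilon\}\cap(\overline{\Omega\cap B}\times[t_1,\tau])$ need not avoid a neighbourhood of $(\partial B\cap\Omega)\times[t_1,\tau]$, and $([w]_h-\epsilon)_+$ is in general \emph{not} in $W_0^{1,p}(\Omega\cap B)$ at a.e.\ time slice. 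So the compactness you invoke has to come from elsewhere. In the intended reading of the lemma (consistent with the cited reference and with every application in this paper, where $\Omega_{t_1,t_2}$ is a bounded cylinder $Q$, which also explains the ``a.e.\ in $Q$'' in the statement), $\Omega$ is bounded and $\overline{\Omega_{t_1,t_2}}$ is already compact; then the reduction is unnecessary and your argument goes through. If one really wanted the unbounded case, one would need some additional decay or boundedness hypothesis at spatial infinity to make $\{w\ge\epsilon\}$ compact, and your proposal would have to say so. Apart from this, the remaining technical points you flag (validity of the Steklov form up to $t_1$, justification that the diffusion term converges to its $h=0$ form, restriction to a.e.\ $\epsilon$ so that $|\{w=\epsilon,\ \nabla w\neq 0\}|=0$, and the vanishing of the initial term) are routine and your sketch handles them correctly.
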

Further, we give the following comparison principle applicable in noncylindrical domains, see \cite[Lemma 4.3]{KL1996} and \cite[Corollary 4.6]{kortekuusiparviainen2010-superparabolic}.
\begin{lemma}
\label{lem:elliptic-comparison}
Let $0<T<\infty$, and let $U \subset \R^n \times (-\infty,T)$ be open and bounded.
Further, assume that $u,v \in \Cz(\overline{U})$, and that $u$ is a weak supersolution and $v$ is a weak subsolution to \eqref{eq_main} in $U$, in the sense of Definition \ref{weak_sol}.
If $v \leq u$ on $\partial U \setminus \{t=T\}$, then $v \leq u$ in $U$.
\end{lemma}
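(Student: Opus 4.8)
The plan is to prove the statement by a Caccioppoli-type energy argument applied to the truncated difference $(v-u-\varepsilon)_+$, combined with a Steklov-averaging device to deal with the lack of time regularity of weak sub- and supersolutions. It suffices to show that $v\le u+\varepsilon$ in $U$ for every $\varepsilon>0$, and then let $\varepsilon\downarrow 0$. Since $\overline U$ is compact and $u,v\in \Cz(\overline U)$, the function $w\coloneqq v-u$ is continuous on $\overline U$ and $w\le 0$ on $\partial U\setminus\{t=T\}$; hence the open set $D_\varepsilon\coloneqq\{z\in\overline U:\ w(z)>\varepsilon\}$ satisfies $\overline{D_\varepsilon}\subset U\cup(\{t=T\}\cap\overline U)$. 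In particular, for every $\tau<T$ the set $\overline{D_\varepsilon}\cap\{t\le\tau\}$ is a compact subset of $U$, since $D_\varepsilon$ stays away from $\partial U\setminus\{t=T\}$ and, because $U$ is bounded, also from the bottom slice $\overline U\cap\{t=t_0\}$, where $t_0\coloneqq\min_{\overline U}t<T$.

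Fix $\tau<T$, cover $\overline{D_\varepsilon}\cap\{t\le\tau\}$ by finitely many space-time cylinders compactly contained in $U$, and choose a cutoff $\zeta\in C_c^\infty(\R^{n+1})$ supported in their union with $\zeta\equiv1$ on a neighbourhood $N$ of $\overline{D_\varepsilon}\cap\{t\le\tau\}$. Writing $[\cdot]_h$ for the Steklov average in time, the weak formulations of Definition~\ref{weak_sol} for $u$ (supersolution) and $v$ (subsolution) can be subtracted and tested with $\varphi\coloneqq\zeta^p\,([w]_h-\varepsilon)_+$ on the slab $\{t_1<t<\tau\}$, where $t_1\in(t_0,\tau)$ is chosen below the level at which $[w]_h>\varepsilon$. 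The terms carrying $\nabla\zeta$ drop out, because $([w]_h-\varepsilon)_+$ vanishes where $\nabla\zeta\ne0$ once $h$ is small enough that $\{[w]_h>\varepsilon\}\cap\{t\le\tau\}\subset N$; the time-derivative term telescopes to $\tfrac12\int_{\{t=\tau\}}(([w]_h-\varepsilon)_+)^2\dx$ with no contribution at $t=t_1$. Passing to the limit $h\to0$, using $u,v\in L^p_{\loc}(t_0,T;W^{1,p}_{\loc}(\Omega))$ on the relevant subcylinders, $[\cA(\cdot,\nabla v)]_h\to\cA(\cdot,\nabla v)$ and $[\cA(\cdot,\nabla u)]_h\to\cA(\cdot,\nabla u)$ strongly, and $[w]_h\to w$ uniformly, yields for a.e.\ $\tau<T$
\[
\tfrac12\int_{\{t=\tau\}}\big((w-\varepsilon)_+\big)^2\dx
+\int_{\{w>\varepsilon\}\cap\{t<\tau\}}\big(\cA(x,t,\nabla v)-\cA(x,t,\nabla u)\big)\cdot(\nabla v-\nabla u)\dx\dt\le0\,.
\]

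By the monotonicity assumption~\eqref{as:monotonicity-weaker}, the second integral is nonnegative, hence $\int_{\{t=\tau\}}((w-\varepsilon)_+)^2\dx=0$ for a.e.\ $\tau<T$. Since $w$ is continuous on $\overline U$, this forces $w\le\varepsilon$ throughout $U$, and letting $\varepsilon\downarrow0$ gives $v\le u$ in $U$, as claimed.

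I expect the main obstacle to be purely technical, of two kinds. First, the non-cylindrical geometry of $U$ prevents a direct appeal to cylindrical test functions and to the cylindrical comparison principle~\ref{lem:standard-comparison}; this is circumvented by the localization above, which is made possible precisely by the compactness of $\overline{D_\varepsilon}\cap\{t\le\tau\}$ in $U$ and by the fact that the hypothesis is imposed only on $\partial U\setminus\{t=T\}$, so that the set where $v>u$ is allowed to touch the terminal slice $\{t=T\}$ but not the lateral-plus-bottom part of $\partial U$. Second, one must justify the limit $h\to0$ in the Steklov-averaged identity carefully: the set $\{w=\varepsilon\}$ may have positive measure, but the corresponding integrand vanishes there since $\nabla w=0$ a.e.\ on that set, and one needs $[w]_h(\cdot,\tau)\to w(\cdot,\tau)$ in $L^2_{\loc}$ for a.e.\ $\tau$, which holds at Lebesgue points in time.
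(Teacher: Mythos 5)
The paper does not prove this lemma itself: it defers to \cite[Lemma 4.3]{KL1996} and \cite[Corollary 4.6]{kortekuusiparviainen2010-superparabolic}, so there is no in-paper proof to compare against. Your energy argument is, in spirit, the classical way to prove this ``elliptic'' comparison principle for noncylindrical domains, and the overall structure is sound: the crucial observation that $\overline{D_\varepsilon}\cap\{t\le\tau\}$ is a compact subset of $U$ for every $\tau<T$, the energy identity obtained by testing the difference of the two weak formulations with (a Steklov-regularized version of) $(v-u-\varepsilon)_+$, the positivity of the monotonicity term under \eqref{as:monotonicity-weaker}, and the conclusion via continuity. The relation between $D_\varepsilon$ and the lower boundary is also handled correctly, since $\overline{U}\cap\{t=\min_{\overline U}t\}\subset\partial U\setminus\{t=T\}$ automatically, so there is a ``strip of quietness'' near the bottom of the support.

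Two of the technical points deserve slightly more care than your sketch gives them, though neither is fatal. First, Definition~\ref{weak_sol} only supplies the integral identity for $\varphi\in C_c^\infty(\Omega_{t_1,t_2})$ with $\Omega_{t_1,t_2}\Subset U$ a cylinder; to test with a $\varphi$ compactly supported in the noncylindrical $U$ one should pass through a finite cover by such cylinders together with a subordinate partition of unity (this is routine, but is exactly where the noncylindrical geometry enters and should be stated). Second, the claim that $([w]_h-\varepsilon)_+$ vanishes where $\nabla\zeta\ne0$ for $h$ small requires $\zeta\equiv1$ not merely on a neighbourhood of $\overline{D_\varepsilon}\cap\{t\le\tau\}$ but on a neighbourhood of the strictly larger compact set $\{w\ge\varepsilon/2\}\cap\{t\le\tau\}\Subset U$ (or any $\{w\ge\varepsilon'\}$ with $0<\varepsilon'<\varepsilon$): uniform convergence $[w]_h\to w$ gives $\{[w]_h>\varepsilon\}\subset\{w>\varepsilon-\delta_h\}$, and the latter set need not shrink to $\overline{D_\varepsilon}$ as $\delta_h\downarrow0$. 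With these adjustments, and the standard replacement of the sharp time cut-off $\chi_{\{t_1<t<\tau\}}$ by a smooth approximation to recover the boundary term at $t=\tau$, the argument closes.
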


Next, we state a Caccioppoli inequality for weak supersolutions, see~\cite[Lemma 5.2]{kortekuusiparviainen2010-superparabolic}.
\begin{lemma} \label{lem:Cacciopoli}
Let \(\Omega \subset \R^n\) be open, and let \(u\) be a weak supersolution to~\eqref{eq_main} in $\Omega_T$ such that \(-M \le u \le M\).
Then there exists a constant \(C=C(p,\nu,L)>0\) such that
\[
\int_{\Omega_T} \eta^{p} |\nabla u|^{p}\dx\dt
\le
C M^{2}\int_{\Omega_T}\left|\partial_t \eta^{p}\right|\dx\dt
\;+\;
C M^{p}\int_{\Omega_T} |\nabla \eta|^{p}\dx\dt\,,
\]
whenever \(\eta \in \Cz^{\infty}_0(\Omega_T)\) and \(\eta \ge 0\).
\end{lemma}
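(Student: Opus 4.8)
The plan is to test the weak supersolution inequality \eqref{eq: def of weak sol} with the function $\varphi = \eta^p(M-u)$. This is nonnegative because $u \le M$, and it is compactly supported in $\Omega_T$ since $\eta \in \Cz^\infty_0(\Omega_T)$; as the operator $\cA(x,t,\cdot)$ depends on $\nabla u$ but not on $u$ itself, the fact that $u$ may change sign causes no difficulty (alternatively, one could replace $u$ by $u+M \ge 0$, which is again a supersolution). A weak supersolution need not be differentiable in time, so the rigorous version of this choice is carried out with Steklov averages $u_h$: one works on a.e.\ time slice with $\partial_t u_h = h^{-1}(u(\cdot,\cdot+h)-u)$, performs all the manipulations below with $u_h$ wherever a time derivative appears, and passes to the limit $h \downarrow 0$ using $u_h \to u$ in $L^p_{\loc}(t_1,t_2;W^{1,p}_{\loc}(\Omega))$ and $[\cA(\cdot,\cdot,\nabla u)]_h \to \cA(\cdot,\cdot,\nabla u)$ in $L^{p/(p-1)}_{\loc}$. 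All integrals below are finite over $\supp\eta$ by the bound $-M \le u \le M$ together with $u \in L^p_{\loc}(t_1,t_2;W^{1,p}_{\loc}(\Omega))$.

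Write $v := M - u$, so that $0 \le v \le 2M$, $\nabla v = -\nabla u$, and $-u = v - M$. For the parabolic term, expand $\partial_t(\eta^p v) = (\partial_t\eta^p)\,v + \eta^p\,\partial_t v$ and note
\[
\int_{\Omega_T} (-u)\,\partial_t(\eta^p v)\dx\dt = \int_{\Omega_T} (v-M)\big[(\partial_t\eta^p)v + \eta^p\partial_t v\big]\dx\dt .
\]
Here $\int \eta^p\,\partial_t(v^2/2) = -\tfrac12\int v^2\,\partial_t\eta^p$ after integrating by parts in $t$ (the boundary terms vanish since $\eta$ has compact support), while the two terms linear in $v$, namely $-M\int v\,\partial_t\eta^p$ and $-M\int\eta^p\partial_t v = M\int v\,\partial_t\eta^p$, cancel. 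Hence the parabolic term equals $\tfrac12\int_{\Omega_T} v^2\,\partial_t\eta^p\dx\dt$, and since $v \le 2M$ it is bounded in absolute value by $C M^2\int_{\Omega_T}|\partial_t\eta^p|\dx\dt$.

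For the elliptic term, $\nabla(\eta^p v) = p\eta^{p-1}v\,\nabla\eta - \eta^p\nabla u$, so by the coercivity and growth bounds in \eqref{as:growth},
\[
\cA(x,t,\nabla u)\cdot\nabla(\eta^p v) \le pL\,\eta^{p-1}v\,|\nabla u|^{p-1}|\nabla\eta| - \nu\,\eta^p|\nabla u|^p .
\]
Inserting the two terms into the supersolution inequality $\int -u\,\partial_t\varphi + \cA(x,t,\nabla u)\cdot\nabla\varphi \ge 0$ and rearranging gives
\[
\nu\int_{\Omega_T}\eta^p|\nabla u|^p\dx\dt \le C M^2\int_{\Omega_T}|\partial_t\eta^p|\dx\dt + pL\int_{\Omega_T}\eta^{p-1}v\,|\nabla u|^{p-1}|\nabla\eta|\dx\dt .
\]
Applying Young's inequality to the last integrand, $\eta^{p-1}v|\nabla u|^{p-1}|\nabla\eta| \le \tfrac{\nu}{2pL}\eta^p|\nabla u|^p + C(p,\nu,L)\,v^p|\nabla\eta|^p$, and using $v \le 2M$, the gradient term is absorbed into the left-hand side, leaving
\[
\tfrac{\nu}{2}\int_{\Omega_T}\eta^p|\nabla u|^p\dx\dt \le C M^2\int_{\Omega_T}|\partial_t\eta^p|\dx\dt + C M^p\int_{\Omega_T}|\nabla\eta|^p\dx\dt ,
\]
and dividing through by $\nu/2$ yields the assertion with $C = C(p,\nu,L)$.

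The computation itself is elementary, and the only genuine obstacle is the absence of a time derivative for $u$: the cancellation of the terms linear in $v$ and the identity for the parabolic term must be justified through the Steklov-averaging procedure described above, which is by now standard for equations of this type (cf.\ the references cited for the comparison principles preceding this lemma). Everything else — the structure conditions, Young's inequality, and the absorption — is routine.
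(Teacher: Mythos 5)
The paper does not actually prove this lemma; it cites \cite[Lemma 5.2]{kortekuusiparviainen2010-superparabolic}. Your proof is correct and is essentially the standard argument one would find there: test with $\varphi=\eta^p(M-u)$, split the parabolic term via $-u=(M-u)-M$, observe the cancellation of the linear-in-$v$ pieces, integrate by parts to get $\tfrac12\int v^2\,\partial_t\eta^p$, and handle the elliptic term with the structure conditions plus Young's inequality. The algebra checks out (the two terms $-M\int v\,\partial_t\eta^p$ and $M\int v\,\partial_t\eta^p$ do cancel, and the remaining $\int v^2\,\partial_t\eta^p-\tfrac12\int v^2\,\partial_t\eta^p=\tfrac12\int v^2\,\partial_t\eta^p$ is exactly what you claim), and you correctly flag the Steklov-average regularization needed to make the time-derivative manipulations rigorous. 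One very minor imprecision: the coercivity bound $\cA(x,t,\nabla u)\cdot\nabla u\ge\nu|\nabla u|^p$ is used to lower-bound a nonnegative quantity that then appears with a minus sign, so the inequality you write for $\cA\cdot\nabla(\eta^p v)$ is a genuine upper bound only after inserting it into the supersolution integral inequality, which is what you do — the logic is fine, just stated slightly loosely as a pointwise estimate. The constant tracking is clean and gives $C=C(p,\nu,L)$ as required.
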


For sequences of uniformly bounded supersolutions, we have the following result, see \cite[Theorem 5.3, Corollary 5.7]{kortekuusiparviainen2010-superparabolic}.
\begin{lemma}
\label{lem:sequences-supersolutions}
Let $U \subset \R^{n+1}$ be open, and let $(u_i)_{i \in \N}$ be a sequence of weak supersolutions to \eqref{eq_main} in $U$.
Assume that there exists $M>0$ and a limit map $u \colon U \to \R$ such that $|u_i| \leq M$ for all $i \in \N$ and $u_i \to u$ a.e.~as $i \to \infty$.
Then, $u$ is a weak supersolution to \eqref{eq_main} in $U$, and for any $1 \leq q < p$, we have that $\nabla u_i \to \nabla u$ in $L^q_\mathrm{loc}(U)$ as $i \to \infty$.
\end{lemma}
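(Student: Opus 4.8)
The plan is to pass to the limit in the weak formulation of Definition~\ref{weak_sol}, the real work being the convergence of the gradients. Fix a subcylinder $\Omega_{t_1,t_2}\Subset U$ and a cutoff $0\le\eta\in C_c^\infty(\Omega_{t_1,t_2})$. Since $|u_i|\le M$ uniformly, Lemma~\ref{lem:Cacciopoli} applied on $\Omega_{t_1,t_2}$ gives a bound on $\int_{\Omega_{t_1,t_2}}\eta^p|\nabla u_i|^p\dx\dt$ that is independent of $i$, so $(\nabla u_i)_i$ is bounded in $L^p_\loc(U)$. Together with $u_i\to u$ in $L^r_\loc(U)$ for every $r<\infty$ (dominated convergence, using $|u_i|\le M$), a standard argument identifies the weak $L^p_\loc$-limit of a subsequence of $(\nabla u_i)_i$ with the distributional gradient of $u$; hence $u\in L^p_\loc(t_1,t_2;W^{1,p}_\loc(\Omega))$ on every such cylinder and $\nabla u_i\rightharpoonup\nabla u$ weakly in $L^p_\loc(U)$ along the full sequence. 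It therefore remains to prove $\nabla u_i\to\nabla u$ in $L^q_\loc(U)$ for $1\le q<p$, after which the fact that $u$ is a weak supersolution follows by passing to the limit.

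For the gradient convergence, note that the uniform $L^p_\loc$-bound makes $\{|\nabla u_i|^q\}_i$ equiintegrable for every $q<p$, so by Vitali's theorem (and the usual subsequence argument) it suffices to show $\nabla u_i\to\nabla u$ in measure on compact subsets of $U$. By the standard Leray--Lions-type pointwise argument, which uses only that $\cA$ is Carathéodory, satisfies~\eqref{as:growth}, and is strictly monotone in the sense of~\eqref{as:monotonicity-weaker}, this in turn reduces to proving that, for every $\eta$ as above,
\[
\int_{\Omega_{t_1,t_2}}\eta^p\,\bigl(\cA(x,t,\nabla u_i)-\cA(x,t,\nabla u)\bigr)\cdot(\nabla u_i-\nabla u)\dx\dt\ \longrightarrow\ 0\,,
\]
the integrand being nonnegative by~\eqref{as:monotonicity-weaker}.

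To obtain this limit one tests the supersolution inequalities against nonnegative functions built from $u_i$ and $u$. The mechanism is most transparent for a monotone sequence, say $u_i\uparrow u$: then $\eta^p(u-u_i)\ge0$ is an admissible test function, and testing the inequality for $u_i$ with it, after moving the parabolic term to the right, gives $\int\eta^p\,\cA(x,t,\nabla u_i)\cdot(\nabla u_i-\nabla u)\dx\dt\le p\int\eta^{p-1}(u-u_i)\,\cA(x,t,\nabla u_i)\cdot\nabla\eta\dx\dt-\int u_i\,\partial_t\bigl(\eta^p(u-u_i)\bigr)\dx\dt$. The first term on the right tends to $0$ because $u-u_i\to0$ in $L^p_\loc$ while $\cA(x,t,\nabla u_i)\cdot\nabla\eta$ is bounded in $L^{p'}_\loc$, and the parabolic term tends to $0$ by a Steklov-averaging argument: working with the time-regularized inequality, the leading contribution is a perfect time derivative with a favorable sign and the remainder is controlled by $\|u-u_i\|_{L^2_\loc}\to0$. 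Hence $\limsup_i\int\eta^p\,\cA(x,t,\nabla u_i)\cdot(\nabla u_i-\nabla u)\le0$, while the complementary term $\int\eta^p\,\cA(x,t,\nabla u)\cdot(\nabla u_i-\nabla u)\to0$ merely by $\nabla u_i\rightharpoonup\nabla u$ --- here it is crucial that the comparison function $u$ is \emph{fixed} --- so the displayed integral vanishes. For a general bounded sequence one reduces to the monotone case (for instance by comparing $u_i$ with the lower-semicontinuous regularization of $\inf_{k\ge i}u_k$, which is again a supersolution, and transferring the conclusion back to $(u_i)_i$). This reduction, together with the Steklov bookkeeping for the parabolic term, is the technical heart of the proof and is the step I expect to be the main obstacle; it is carried out in the references cited before the statement.

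Finally, once $\nabla u_i\to\nabla u$ a.e.\ along a subsequence, passing to the limit in Definition~\ref{weak_sol} is routine. Fix $\varphi\in C_c^\infty(\Omega_{t_1,t_2})$. Then $\int -u_i\,\partial_t\varphi\dx\dt\to\int -u\,\partial_t\varphi\dx\dt$ by dominated convergence, while $(\cA(x,t,\nabla u_i))_i$ is bounded in $L^{p'}_\loc(U)$ by~\eqref{as:growth}, hence equiintegrable, and converges a.e.\ to $\cA(x,t,\nabla u)$, so Vitali's theorem gives $\cA(x,t,\nabla u_i)\to\cA(x,t,\nabla u)$ in $L^1_\loc(U)$ and therefore $\int\cA(x,t,\nabla u_i)\cdot\nabla\varphi\dx\dt\to\int\cA(x,t,\nabla u)\cdot\nabla\varphi\dx\dt$. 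Since each integral $\int -u_i\,\partial_t\varphi+\cA(x,t,\nabla u_i)\cdot\nabla\varphi\dx\dt$ is nonnegative whenever $\varphi\ge0$, so is its limit, and combined with $u\in L^p_\loc(t_1,t_2;W^{1,p}_\loc(\Omega))$ established in the first step, this shows that $u$ is a weak supersolution of~\eqref{eq_main} in $U$; the $L^q_\loc$-convergence of the gradients for $1\le q<p$ was obtained along the way.
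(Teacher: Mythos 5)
The paper offers no proof of this lemma: it is quoted verbatim from \cite[Theorem 5.3, Corollary 5.7]{kortekuusiparviainen2010-superparabolic}. There is therefore no ``paper's proof'' to compare your sketch against, and what follows is an assessment of your proposal on its own.

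Your overall scaffolding is correct and matches what a proof along these lines must contain: the Caccioppoli estimate of Lemma~\ref{lem:Cacciopoli} gives a uniform $L^p_\loc$ bound on $\nabla u_i$; weak $L^p_\loc$ convergence of a subsequence of gradients is identified with $\nabla u$; equiintegrability of $|\nabla u_i|^q$ for $q<p$ lets Vitali upgrade convergence in measure of gradients to $L^q_\loc$ convergence; and once $\nabla u_i\to\nabla u$ a.e.\ along a subsequence, passing to the limit in the weak inequality via dominated convergence and Vitali is routine. All of that is fine.

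The genuine gap is in the step you yourself flag as the heart of the matter, and I do not think your proposed reduction repairs it. Two issues. First, the reduction to a monotone sequence via $v_i:=\text{lsc-reg}\bigl(\inf_{k\ge i}u_k\bigr)$ requires $v_i$ to be a weak supersolution (not merely superparabolic): the infimum of finitely many weak supersolutions is a weak supersolution, but passing to the countable infimum and then showing the lsc regularization is again a weak supersolution with $L^p_\loc$ gradient is essentially an instance of the very convergence statement under proof, so the argument becomes circular unless one does additional work. Second, and more decisively, even granting that $v_i\uparrow u$ with each $v_i$ a supersolution and that the monotone case gives $\nabla v_i\to\nabla u$ in $L^q_\loc$, this says nothing about $\nabla u_i$: the lemma asserts $\nabla u_i\to\nabla u$, and there is no stated mechanism to ``transfer back'' from $(v_i)_i$ to $(u_i)_i$. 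The nonnegative differences $u_i-v_i$ tend to $0$ a.e.\ and are bounded, but concluding $\nabla(u_i-v_i)\to 0$ is again the full strength of the convergence result. What the cited reference actually does is argue directly, without a monotone reduction: one tests the supersolution inequalities (after Steklov- or exponential-in-time mollification, precisely because $\partial_t u_i$ is not a function) with truncations of the difference of two members of the sequence, exploiting the structure conditions~\eqref{as:growth} and the monotonicity~\eqref{as:monotonicity-weaker} to make the cross term $\int\eta^p\bigl(\cA(x,t,\nabla u_i)-\cA(x,t,\nabla u_j)\bigr)\cdot(\nabla u_i-\nabla u_j)\dx\dt$ small on the set where $|u_i-u_j|$ is small, and then deduces convergence in measure of the gradients. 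Your Steklov-averaging remark points in the right direction for controlling the parabolic term, but as written the argument relies on a reduction that does not close.
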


The following supremum estimate for subsolutions is taken from~\cite[Thm 3.4]{Kuusi2009}.
\begin{lemma}
\label{lemma: sup estimate for subsolutions}
Let $U$ be an open subset of $\mathbb{R}^{n+1}$.
Suppose that $u$ is a non-negative subsolution to \eqref{eq_main} in $U$ and $B_R(x_0) \times (t_0 - T, t_0) \subset U$. Then there exists a constant $C = C(n, p, \nu,L)$ such that for any $0<\sigma<1$, we have
\[
\esssup_{B(x_0,\sigma R)\times(t_0 - \sigma^pT, t_0)} u 
\leq  \left( \frac{ R^p}{T} \right)^{1/(p-2)}
+ \frac{C}{(1-\sigma)^{n+p}}\frac{T}{R^p} \dashint_{B(x_0,R)\times(t_0-T,t_0)} u^{p-1} \dx \dt\,.
\]
\end{lemma}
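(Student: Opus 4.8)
The plan is a De~Giorgi iteration run in the intrinsic geometry of the equation. Put $\ell \coloneqq (R^p/T)^{1/(p-2)}$ (in the genuinely degenerate case $p>2$; for $p=2$ this term is absent and the statement is the classical Moser bound). Fix a decreasing family of cylinders $Q_0\supset Q_1\supset\cdots$ shrinking from $Q_0=B(x_0,R)\times(t_0-T,t_0)$ to $Q_\infty=\bigcap_j Q_j = B(x_0,\sigma R)\times(t_0-\sigma^pT,t_0)$, with spatial radii and time-lengths decreasing by $\sim 2^{-j}(1-\sigma)R$ and $\sim 2^{-j}(1-\sigma)T$ at the $j$-th step, together with cutoffs $\eta_j\in C^\infty$ equal to $1$ on $Q_{j+1}$, supported in $Q_j$, with $\abs{\nabla\eta_j}\lesssim 2^j((1-\sigma)R)^{-1}$ and $\abs{\partial_t\eta_j}\lesssim 2^j((1-\sigma)T)^{-1}$. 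Since we do not ask $\eta_j$ to vanish as $t\uparrow t_0$, the computations are carried out on Steklov time-averages of $u$ and one passes to the limit at the end, as is standard.

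For $k\ge0$ the truncation $(u-k)_+$ is again a weak subsolution to \eqref{eq_main}, because \eqref{as:growth} controls $\cA(x,t,\nabla u)=\cA(x,t,\nabla(u-k)_+)$ on $\{u>k\}$. Testing with $\eta_j^p(u-k)_+$ and using \eqref{as:growth} gives the Caccioppoli inequality
\[
\esssup_t\int\eta_j^p(u-k)_+^2\dx + \iint_{Q_j}\eta_j^p\abs{\nabla(u-k)_+}^p\dx\dt \le C\iint_{Q_j}\Big(\abs{\nabla\eta_j}^p(u-k)_+^p + \abs{\partial_t\eta_j^p}(u-k)_+^2\Big)\dx\dt,
\]
while the parabolic Sobolev inequality, applied to $\eta_j(u-k)_+$, asserts
\[
\iint_{Q_j}\big(\eta_j(u-k)_+\big)^q\dx\dt \le C\Big(\esssup_t\int\eta_j^p(u-k)_+^2\dx\Big)^{p/n}\iint_{Q_j}\eta_j^p\abs{\nabla(u-k)_+}^p\dx\dt, \qquad q\coloneqq p\tfrac{n+2}{n}.
\]
Combining the two (with the routine nested-cutoff bookkeeping that makes the $L^2$-in-time term on the right of the second inequality legitimate) and applying Hölder's inequality on the super-level sets shows that the $L^q$-energy of $(u-k_{j+1})_+$ over $Q_{j+1}$ is bounded by a superlinear power $1+\epsilon$ of the corresponding quantity for $k_j$ over $Q_j$, with a constant of the form $Cb^j(1-\sigma)^{-\gamma}$ times explicit powers of $R$ and $T$.

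With the choice of levels $k_j=\ell(1-2^{-j})$ (or $k_0=0$), the standard fast geometric convergence lemma for sequences $Y_{j+1}\le Cb^jY_j^{1+\epsilon}$ drives the iterated quantity to $0$ provided $Y_0$ is below an explicit threshold; unwinding this yields a bound of the form $\esssup_{Q_\infty}u\le\ell+C(1-\sigma)^{-\gamma}(\cdots)\big(\dashint_{Q_0}u^{p}\big)^{\theta}$. Using $u\ge0$ one then lowers the exponent from $p$ to $p-1$: estimate $\dashint_{Q_0}u^{p}\le(\esssup_{Q_0}u)\dashint_{Q_0}u^{p-1}$ and absorb a small multiple of the supremum via Young's inequality, performing the absorption along a chain of slightly nested cylinders so that the absorbed supremum is taken over a strictly larger set than the one being estimated. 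Bookkeeping of the $R$- and $T$-powers gives exactly $\esssup_{Q_\infty}u\le\ell+C(1-\sigma)^{-(n+p)}\tfrac{T}{R^p}\dashint_{Q_0}u^{p-1}\dx\dt$, with $C=C(n,p,\nu,L)$.

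The main obstacle is the mismatch of homogeneities in the Caccioppoli inequality, which is the hallmark difficulty of the degenerate range $p>2$: the term $\abs{\partial_t\eta_j^p}(u-k)_+^2$ scales like $u^2$, whereas the Sobolev gain is fed by the degree-$p$ gradient term. Controlling the former forces one to work at levels $\gtrsim\ell$ — equivalently, to renormalize $u$ by $\ell$ — and this is exactly where $\ell=(R^p/T)^{1/(p-2)}$ enters; threading it through the geometric iteration without losing the explicit dependence on $1-\sigma$, $R$ and $T$ is the delicate bookkeeping. The secondary difficulty is the final interpolation step, which must lower the integrability exponent to $p-1$ while keeping the dependence on $\tfrac{T}{R^p}\dashint_{Q_0}u^{p-1}$ both linear and dimensionally correct; this is where the nested-cylinder absorption argument is needed.
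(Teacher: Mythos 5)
The paper does not prove this lemma; it is quoted verbatim from \cite[Thm.~3.4]{Kuusi2009}, so there is no internal proof to compare against. Your De Giorgi iteration in the intrinsic geometry is exactly the argument used in that reference (and, for such $L^\infty$--$L^{p-1}$ bounds, the standard one in the literature): Caccioppoli plus parabolic Sobolev to get $Y_{j+1}\le Cb^j Y_j^{1+\epsilon}$, levels chosen so that the term $\ell=(R^p/T)^{1/(p-2)}$ absorbs the degenerate $u^2$ contribution, fast geometric convergence to kill the tail, and then a Young/interpolation plus nested-cylinder absorption to pass from $L^p$ to $L^{p-1}$ on the right-hand side.

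Two small points worth tightening. First, the levels should read $k_j=\ell+M(1-2^{-j})$ (or similar) with $M$ dictated by the smallness threshold in the geometric convergence lemma, not $k_j=\ell(1-2^{-j})$; as written the levels stay below $\ell$ and the iteration would only give $\esssup u\le\ell$ without the integral term, though your subsequent sentence makes clear you intend the correct conclusion. Second, in the displayed parabolic Sobolev inequality the $\esssup_t$-factor should carry $\eta_j^2(u-k)_+^2$, not $\eta_j^p(u-k)_+^2$; matching it up with the $\eta_j^p$ weight coming out of Caccioppoli is precisely the nested-cutoff bookkeeping you allude to (use $\eta_j\equiv1$ on $\supp\eta_{j+1}$), so this is a notational slip rather than a gap. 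With these repairs the proposal is a faithful reconstruction of the cited proof.
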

In \Cref{sec:necessity}, we will need the following result on expansion of positivity, cf.~\cite[Proposition 4.1,p.~66]{DiBenedettoGianazzaVespri2012}. See also \cite[Lemma 3.1]{DiBenedettoGianazzaVespri2008} and \cite[Proposition 3.1]{Kuusi2008}.
Note that $C_1$ can be chosen larger by decreasing $\eta$.
\begin{lemma}
\label{lemma: expansion of pos}
Let $u$ be a non-negative weak supersolution to \eqref{eq_main} in $B(x_0,4R)\times(t_0,t_0+T_0)$. Let $t_0<s<t_0+T_0$, $k>0$ and suppose that
\begin{equation*}
    \big\lvert \big\{x\in B(x_0,R): u(x,s) \geq k\big\} \big\rvert \geq \frac{1}{2} \lvert B(x_0,R)\rvert. 
\end{equation*}
There exists constants $C_1>1$ and $\eta>0$ depending only on $(n,p,\nu,L)$ such that if $s+C_1 k^{2-p} R^p<t_0+T_0$, then
\begin{equation*}
    \inf_Q u\geq \eta k, 
\end{equation*}
where $Q=B(x_0,2R)\times \left( s+\frac{1}{2}C_1 k^{2-p}R^p,s+C_1 k^{2-p}R^p \right)$.
\end{lemma}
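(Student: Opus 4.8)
The plan is to follow the classical three-stage argument for expansion of positivity: first propagate the measure-theoretic positivity of $u$ forward in time by a logarithmic energy estimate, then transfer this information from $B(x_0,R)$ to the larger ball at the price of a dimensional factor in the density, and finally run a De Giorgi iteration converting ``the set where $u$ is small is itself small in measure'' into a pointwise lower bound. After a translation we may assume $x_0=0$ and $s=0$; the hypothesis $s+C_1k^{2-p}R^p<t_0+T_0$ guarantees that all cylinders below lie inside the domain of $u$, and the ball $B(0,4R)$ leaves room for the spatial cutoffs. For the first stage I would invoke the logarithmic energy estimate for nonnegative supersolutions (cf.~\cite{DiBenedettoGianazzaVespri2012}) applied to the truncation $w:=(k-u)_+$ at level $k$, with a spatial cutoff $\zeta\equiv1$ on $B(0,R)$ and $\supp\zeta\subset B(0,2R)$: since $u(\cdot,0)\ge k$ on at least half of $B(0,R)$, the initial term $\int_{B(0,R)}\Psi(w(\cdot,0))^2\dx$ is controlled, and the estimate forces $\Psi(w(\cdot,t))$ to stay integrable, where $\Psi(w)=\bigl[\ln\bigl(\tfrac{Hk}{Hk-w+ck}\bigr)\bigr]_+$ for suitable $H\in(0,1]$, $c\in(0,1)$. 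Unwinding the logarithm yields constants $\delta_1\in(0,1)$ and $\ve_0\in(0,1)$, depending only on $(n,p,\nu,L)$, with
\[
\bigl|\{x\in B(0,R):u(x,t)\ge \ve_0 k\}\bigr|\ge\tfrac14|B(0,R)|\qquad\text{for all }t\in\bigl(0,\delta_1(\ve_0 k)^{2-p}R^p\bigr].
\]
Since $\ve_0<1$ and $2-p\le0$, the constant $C_1:=\delta_1\ve_0^{2-p}$ depends only on the data and may be enlarged past $1$ by further shrinking $\ve_0$ (hence the final $\eta$), so the bound holds for all $t\in(0,C_1k^{2-p}R^p]$.

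The bound of the first stage is stated on the full ball $B(0,R)$, so it immediately gives, for every $t\in(0,C_1k^{2-p}R^p]$,
\[
\bigl|\{x\in B(0,2R):u(x,t)\ge\ve_0 k\}\bigr|\ge\tfrac14|B(0,R)|=2^{-(n+2)}|B(0,2R)|,
\]
which is a density lower bound that is now uniform in $t$ on all of $B(0,2R)$ --- this is the mechanism that enlarges the ball, at the cost of the factor $2^{-(n+2)}$. With this at hand I would work in $Q^\ast:=B(0,2R)\times\bigl(\tfrac12C_1k^{2-p}R^p,\,C_1k^{2-p}R^p\bigr)$ and combine the Caccioppoli inequality for the truncations $(2^{-j}\ve_0k-u)_+$ of the supersolution $u$ (a localized variant of \Cref{lem:Cacciopoli}) with the De Giorgi isoperimetric inequality and the density bound above: a standard iteration in $j$ shows that for each $\nu_0\in(0,1)$ there is $j_\ast=j_\ast(n,p,\nu,L,\nu_0)$ with $\bigl|\{(x,t)\in Q^\ast:u<2^{-j_\ast}\ve_0k\}\bigr|<\nu_0|Q^\ast|$. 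Taking $\nu_0$ equal to the threshold in the De Giorgi lemma for nonnegative supersolutions and applying that lemma (using the spare room up to $B(0,4R)$ so that the spatial ball is not lost) yields $u\ge\tfrac12\cdot2^{-j_\ast}\ve_0k$ a.e.~in $Q=B(0,2R)\times\bigl(\tfrac12C_1k^{2-p}R^p,\,C_1k^{2-p}R^p\bigr)$, i.e.~the assertion with $\eta:=2^{-j_\ast-1}\ve_0$.

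I expect the genuinely delicate point to be the first stage together with the bookkeeping of the intrinsic time scale: in the degenerate range $p>2$ the natural lifespan of positivity at level $\ve_0 k$ is $(\ve_0 k)^{2-p}R^p$ rather than $k^{2-p}R^p$, and one must verify that the constant $\ve_0$ coming from the time expansion and the index $j_\ast$ coming from the De Giorgi iteration combine so that $\eta$ and $C_1$ depend only on $(n,p,\nu,L)$ (which is also why $C_1$ can only be \emph{enlarged}, by decreasing $\eta$, as noted before the statement). The De Giorgi ingredients used in the last stage --- the isoperimetric inequality, fast geometric convergence, the Caccioppoli estimate for truncations --- are standard but are not recalled in the excerpt, so a fully self-contained proof would either include them or simply invoke \cite{DiBenedettoGianazzaVespri2012,DiBenedettoGianazzaVespri2008}.
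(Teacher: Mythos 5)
The paper does not prove this lemma; it is stated with a citation to \cite[Proposition~4.1, p.~66]{DiBenedettoGianazzaVespri2012} (see also \cite{DiBenedettoGianazzaVespri2008, Kuusi2008}), so there is no in-paper proof against which to compare. Your outline is a sensible reconstruction of the classical expansion-of-positivity argument and, in its first two stages, matches the cited strategy: the logarithmic estimate propagates the density at the (lowered) level $\ve_0 k$ forward over a time window of order $(\ve_0 k)^{2-p}R^p$, and the ball enlargement is just a measure-theoretic relabeling. You also correctly note that shrinking $\ve_0$ lengthens the window, which explains the remark ``$C_1$ can be chosen larger by decreasing $\eta$''.

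However, your third stage has a genuine gap that you sense but do not resolve. You propose to work in a single fixed cylinder $Q^\ast$ of duration $\tfrac12 C_1 k^{2-p}R^p$, apply the shrinking lemma to obtain a universal $j_\ast$ with $|\{u<2^{-j_\ast}\ve_0 k\}\cap Q^\ast|<\nu_0|Q^\ast|$, and then invoke the pointwise De~Giorgi lemma at the level $l=2^{-j_\ast}\ve_0 k$. But for $p>2$ the threshold $\nu_0$ in that pointwise lemma depends on the intrinsic aspect ratio $\theta l^{p-2}$ of the cylinder at the level $l$; since $C_1 k^{2-p}R^p \approx \delta_1 (\ve_0 k)^{2-p}R^p$, the aspect ratio at level $2^{-j_\ast}\ve_0 k$ is of order $\delta_1\, 2^{-(p-2)j_\ast}$, which degenerates as $j_\ast$ grows, so $\nu_0\to 0$ as $j_\ast\to\infty$. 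Meanwhile $j_\ast$ is chosen through the shrinking lemma precisely in terms of $\nu_0$, and it grows like a power of $1/\nu_0$, so the two requirements ($j_\ast$ large enough for the shrinking lemma, $\nu_0$ large enough for the pointwise lemma at aspect ratio $\sim 2^{-(p-2)j_\ast}$) cannot be satisfied simultaneously when $p>2$. This circularity is exactly what the cited proof in DiBenedetto--Gianazza--Vespri breaks by a logarithmic/exponential change of the time variable, which turns the whole family of intrinsic time scales $\{(\sigma k)^{2-p}R^p:\sigma\in(0,1]\}$ into a single uniform one, after which the $p=2$ measure-to-pointwise machinery applies with constants independent of the level. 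Your sketch omits this ingredient. To repair the argument, either insert that change of variable before the De~Giorgi stage, or replace your fixed cylinder $Q^\ast$ by cylinders whose duration grows intrinsically with the decreasing level (and then verify that the final cylinder still sits inside $(s,s+C_1 k^{2-p}R^p)$ after a suitable adjustment of $\ve_0$ and $C_1$); simply ``enlarging past $1$ by shrinking $\ve_0$'' does not, on its own, close the loop.
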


\subsection{Measure data problems and superparabolic functions} \label{sec:supersolutions-measures}
In this section, we discuss solutions to measure data problems. For simplicity, we consider only cylindrical domains. Let $\mu$ be a Radon measure on $\Omega_T$. We consider the equation with right-hand side 
\begin{equation}
    \label{eq: measure RHS}
    \partial_t u - \div \mathcal{A}(x,t,\nabla u)=\mu\,.
\end{equation}

\begin{definition}
\label{def:solution-measure-data}
A function $u\in L^{p-1}_{\loc}\big( 0,T;W^{1,p-1}_{\loc}(\Omega) \big)$ is called a weak solution of \eqref{eq: measure RHS} if 
\begin{equation*}
    \int_{\Omega_{T}}-u\,\partial_t\varphi + \cA(x,t,\nabla u)\cdot \nabla\varphi \dx\dt = \int_{\Omega_T} \varphi \dmu\,,
\end{equation*}
for all $\varphi\in \Cz^\infty_c(\Omega_T)$.
\end{definition}
Note that a function $u$ in the space $L^{p}_{\loc}\big(0,T;W^{1,p}_{\loc}(\Omega) \big)$ is a weak supersolution in the sense of Definition \ref{weak_sol} if and only if it is a weak solution in the sense of Definition \ref{def:solution-measure-data} with some Radon measure $\mu$.
Indeed, on the one hand, let $u$ be a supersolution in $\Omega_T$ in the sense of \Cref{weak_sol}.
Since
$$
    C^\infty_0(\Omega_T) \ni \varphi \mapsto \int_{\Omega_T}-u\,\partial_t\varphi + \cA(x,t,\nabla u)\cdot \nabla\varphi \dx\dt
$$
defines a non-negative linear functional, the Riesz Representation Theorem gives a measure $\mu$ such that $u$ is a weak solution of \eqref{eq: measure RHS}.
On the other hand, if $u\in L^{p}_{\loc}\big(0,T;W^{1,p}_{\loc}(\Omega) \big)$ is a weak solution of \eqref{eq: measure RHS}, then $u$ is a weak supersolution in the sense of Definition \ref{weak_sol}, since $\mu$ is non-negative.

However, given an arbitrary Radon measure $\mu$, the corresponding solution in the sense of Definition \ref{def:solution-measure-data} is in general only contained in the larger space $u\in L^{p-1}_{\loc}(0,T;W^{1,p-1}_{\loc}(\Omega))$. Therefore, following \cite{KinnunenLukkariPariviainen2010}, we give the following definition of superparabolic functions; see also \cite{KinnunenLindqvist2006}.
\begin{definition}
A function $u \colon \Omega_T\to (-\infty,\infty]$ is $\mathcal{A}$-superparabolic in $\Omega_T$ if it satisfies the following conditions:
\begin{enumerate}[i)]
    \item $u$ is lower semicontinuous;
    \item $u$ is finite in a dense subset of $\Omega_T$;
    \item if $h$ is a solution to~\eqref{eq_main} in $U_{t_1,t_2} \Subset\Omega_T$, continuous in $\overline{U_{t_1,t_2}}$, and $h\leq u$ on the parabolic boundary of $U_{t_1,t_2}$, then $h\leq u$ in $U_{t_1,t_2}$. 
\end{enumerate}
\end{definition}
By the results of \cite{KinnunenLukkariPariviainen2010}, if $u$ is superparabolic in $\Omega_T$, then $u\in L^{p-1}_{\loc}(0,T;W^{1,p-1}_{\loc}(\Omega))$ and there exists a Radon measure $\mu$ such that $u$ is a weak solution of \eqref{eq: measure RHS}. Moreover, for any $k\in \mathbb R$ we have $\min(u,k)\in L^{p}_{\loc}(0,T;W^{1,p}_{\loc}(\Omega))$ and $\min(u,k)$ is a weak supersolution of \eqref{eq_main}.
The associated Riesz measures $\mu_k$ converge weakly to $\mu$, since by the Dominated Convergence Theorem, we have that
\begin{equation}
\begin{split}
    \lim_{k \to \infty} \int_{\Omega_T} \varphi \d\mu_k
    &=
    \lim_{k \to \infty} \int_{\Omega_T} -\min(u,k) \partial_t \varphi + \cA(x,t,\nabla \min(u,k)) \cdot \nabla \varphi \dx\dt \\
    &=
    \int_{\Omega_T} -u \partial_t \varphi + \cA(x,t,\nabla u) \cdot \nabla \varphi \dx\dt
    =
    \int_{\Omega_T} \varphi \d\mu
    \label{eq:weak-convergence-measures-truncations}
\end{split}
\end{equation}
for any $\varphi \in C^\infty_c(\Omega_T)$.
We recall the following results regarding superparabolic functions, see \cite[Theorem 5.8, Theorem 5.12]{KinnunenLukkariPariviainen2010}.
\begin{lemma}\label{lemma:exis_measure}
Let $\mu$ be a non-negative finite Radon measure in $\Omega_T$. Then there exists an $\cA$-superparabolic function $u$ in $\Omega_T$ such that $u$ is a weak solution to~\eqref{eq: measure RHS}. Moreover, $u$ is a weak solution to~\eqref{eq_main} in $\Omega_T\setminus \operatorname{supp}\mu$, in the sense of \Cref{weak_sol}.
\end{lemma}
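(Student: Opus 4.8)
The plan is to run the classical approximation scheme for measure data problems, in the spirit of \cite{KinnunenLukkariPariviainen2010}, under the structural conditions \eqref{as:growth} and \eqref{as:monotonicity-weaker}. First I would regularize the datum: choose nonnegative $f_j \in C^\infty_c(\Omega_T)$ with $f_j\dx\dt \to \mu$ weakly in the sense of Radon measures, $\|f_j\|_{L^1(\Omega_T)} \le \mu(\Omega_T)$, and $\supp f_j$ contained in the $\tfrac1j$-neighbourhood of $\supp\mu$ (e.g.\ by mollification, with a small cut-off near $t=0$). For each $j$ I would take $u_j$ to be the unique weak energy solution, lying in $C([0,T];L^2(\Omega))\cap L^p(0,T;W^{1,p}_0(\Omega))$, of the Cauchy--Dirichlet problem for $\partial_t u_j - \div\cA(x,t,\nabla u_j) = f_j$ in $\Omega_T$ with zero initial and lateral boundary values; existence and uniqueness follow from the standard Leray--Lions/Galerkin theory for monotone parabolic operators, using \eqref{as:growth} and \eqref{as:monotonicity-weaker}. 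Since $f_j \ge 0$ and the zero function is a subsolution with no larger boundary values, \Cref{lem:standard-comparison} gives $u_j \ge 0$.

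Next I would establish the Boccardo--Gallou\"et-type a priori estimates. Testing the equation for $u_j$ with the truncation $\min(u_j,k)$ and invoking \eqref{as:growth} yields $\int_{\Omega_T}|\nabla\min(u_j,k)|^p\dx\dt \le Ck\,\mu(\Omega_T)$ together with an $L^\infty$-in-time bound on $\min(u_j,k)$; a Gagliardo--Nirenberg interpolation followed by a level-set iteration then bounds $(u_j)$, uniformly in $j$, in $L^q(0,T;W^{1,q}_0(\Omega))$ for every $q$ with $1<q<p-\tfrac{n}{n+1}$, which in particular may be taken with $q\ge p-1$. Through the equation this also bounds $\partial_t u_j$ uniformly in $L^1(\Omega_T)+L^{q'}(0,T;W^{-1,q'}(\Omega))$. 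An Aubin--Lions--Simon compactness argument then produces a subsequence (not relabelled) with $u_j \to u$ in $L^1(\Omega_T)$ and a.e., and $\nabla u_j \rightharpoonup \nabla u$ weakly in $L^q(\Omega_T)$.

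The main obstacle is upgrading this to almost everywhere convergence of the gradients, $\nabla u_j \to \nabla u$ a.e., which is what allows passing $\cA(x,t,\nabla u_j)$ to the limit. I would obtain it by the Boccardo--Murat truncation argument: testing the difference of the equations for $u_j$ and $u_\ell$ with $\min\{|u_j-u_\ell|,\delta\}\,\operatorname{sgn}(u_j-u_\ell)$ and exploiting the strict monotonicity \eqref{as:monotonicity-weaker}, while using the uniform $L^1$ bound on the right-hand sides to absorb the error terms, one shows that $(\nabla u_j)$ is Cauchy in measure on each set $\{u\le k\}$, hence convergent a.e.\ in $\Omega_T$. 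Continuity of $\xi\mapsto\cA(x,t,\xi)$ and the upper bound in \eqref{as:growth}, via a Vitali-type argument, then give $\cA(x,t,\nabla u_j)\to\cA(x,t,\nabla u)$ in $L^1_{\loc}(\Omega_T)$, and passing to the limit in $\int_{\Omega_T}-u_j\,\partial_t\varphi + \cA(x,t,\nabla u_j)\cdot\nabla\varphi\dx\dt = \int_{\Omega_T}\varphi f_j\dx\dt$ shows that $u$ is a weak solution of \eqref{eq: measure RHS} in the sense of \Cref{def:solution-measure-data}.

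Finally I would identify the $\cA$-superparabolic representative and treat $\Omega_T\setminus\supp\mu$. Each $u_j$, being a solution with nonnegative right-hand side, is a weak supersolution, and so is each bounded truncation $\min(u_j,k)$; since $0\le\min(u_j,k)\le k$ and $\min(u_j,k)\to\min(u,k)$ a.e., \Cref{lem:sequences-supersolutions} shows that $\min(u,k)\in L^p_{\loc}(0,T;W^{1,p}_{\loc}(\Omega))$ is a weak supersolution of \eqref{eq_main} for every $k$. The pointwise lower semicontinuous regularization $\tilde u$ of $u$ then coincides with $u$ a.e.\ (hence is again a weak solution of \eqref{eq: measure RHS} with the same $\mu$), is finite on a dense subset of $\Omega_T$, and satisfies the comparison property~iii) by applying \Cref{lem:standard-comparison} to the bounded supersolutions $\min(u,k)$ and a standard limiting argument; thus $\tilde u$ is $\cA$-superparabolic, and we rename it $u$. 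For the statement near the support, fix a cylinder $Q\Subset\Omega_T\setminus\supp\mu$: for $j$ large, $f_j$ vanishes on a neighbourhood of $\overline Q$, so there $u_j$ is a weak solution of \eqref{eq_main}, bounded below by $0$ and uniformly bounded above on compact subsets of $Q$ by interior sup estimates for subsolutions (e.g.\ \Cref{lemma: sup estimate for subsolutions}, using the uniform $L^{p-1}$ bound); \Cref{lem:Cacciopoli} then yields a uniform $L^p$ bound for $\nabla u_j$ on compact subsets of $Q$, and \Cref{lem:sequences-supersolutions} transfers the solution property to $u$. Hence $u$ is a weak solution of \eqref{eq_main} in $\Omega_T\setminus\supp\mu$ in the sense of \Cref{weak_sol}.
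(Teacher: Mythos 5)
The paper does not prove \Cref{lemma:exis_measure} itself; it cites \cite[Theorems 5.8 and 5.12]{KinnunenLukkariPariviainen2010}. Your proposal is, in its main lines, a faithful sketch of the approach in that reference: mollify the datum, solve the regularized Cauchy--Dirichlet problems, derive the Boccardo--Gallou\"et a priori bounds below the natural energy, extract a.e.\ convergence of the solutions and (via Boccardo--Murat, for which strict monotonicity \eqref{as:monotonicity-weaker} indeed suffices) of the gradients, pass to the limit in the weak formulation, identify a superparabolic representative, and argue the solution property away from $\supp\mu$ by local boundedness and \Cref{lem:sequences-supersolutions}. So this is essentially the standard construction, not a new route.

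One step should be phrased more carefully. The assertion that ``the pointwise lower semicontinuous regularization $\tilde u$ of $u$ coincides with $u$ a.e.'' is not a fact about general nonnegative $L^1$ functions; for the raw limit $u\in L^{p-1}_{\loc}(0,T;W^{1,p-1}_{\loc}(\Omega))$ there is no a priori reason for its lsc regularization to agree with it. The correct way to obtain the $\cA$-superparabolic representative is to pass through the truncations: each $\min(u,k)$ is a bona fide weak supersolution in $L^p_{\loc}(0,T;W^{1,p}_{\loc}(\Omega))$ and therefore has a lower semicontinuous representative that equals it a.e.\ (via the weak Harnack inequality, cf.\ \cite[Corollary 5.6]{KinnunenLukkariPariviainen2010}); these representatives are increasing in $k$, and one defines $\tilde u$ as their supremum. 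This $\tilde u$ is lsc, equals $u$ a.e., is finite on a dense set, and inherits the comparison property~iii) from the comparison principle applied to the truncations exactly as you indicate. Beyond this, the remaining steps (uniform gradient bounds with exponent $q\in[p-1,\,p-\tfrac{n}{n+1})$; using $f_j\equiv 0$ near compact subcylinders of $\Omega_T\setminus\supp\mu$, the sup estimate \Cref{lemma: sup estimate for subsolutions} for local boundedness, and \Cref{lem:sequences-supersolutions} for both $u$ and $-u$ with the reflected operator to deduce the solution property) are sound.
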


\subsection{The obstacle problem}
Let $\Omega_{t_1,t_2} \subset \R^{n+1}$ be open and bounded.
The \emph{obstacle problem} associated with \eqref{eq_main} in $\Omega_{t_1,t_2}$, with obstacle function $\psi$ is given by
\begin{equation}
    \left\{
    \begin{array}{ll}
        \partial_t u - \div \cA(x,t,\nabla u) = 0 & \text{in } \Omega_{t_1,t_2}\,, \\[5pt]
        u=\psi & \text{on } \partial_\mathrm{par}\Omega_{t_1,t_2}\,, \\[5pt]
        u \geq \psi & \text{in } \Omega_{t_1,t_2}\,.
    \end{array}
    \right.
    \label{eq:obstacle-problem}
\end{equation}
The precise definition of solution is the following, see \cite[Definition 2]{kuusi2014sharp}.
\begin{definition}
\label{def:obstacle-problem}
Let $ \psi \colon \overline{\Omega_{t_1,t_2}} \to \R$ be continuous functions that satisfy the compatibility condition $g \geq \psi$ on $\partial_\mathrm{par}\Omega_{t_1,t_2}$.
A continuous function $u \colon \overline{\Omega_{t_1,t_2}} \to \R$ is a solution to \eqref{eq:obstacle-problem} if it satisfies the following properties:
\begin{enumerate}[i)]
    \item $u \geq \psi$ in $\Omega_{t_1,t_2}$ and $u=g$ on $\partial_\mathrm{par}\Omega_{t_1,t_2}$;
    \item $u$ is a weak supersolution to \eqref{eq_main} in $\Omega_{t_1,t_2}$ in the sense of Definition \ref{weak_sol};
    \item $u$ is a weak solution to \eqref{eq_main} in $\Omega_{t_1,t_2} \cap \{u > \psi\}$ in the sense of Definition \ref{weak_sol};
    \item $u$ is the smallest weak supersolution above $\psi$, that is, if $v$ is a weak supersolution in $\Omega_{t_1,t_2}$ and $v\geq \psi$, then $v\geq u$.
\end{enumerate}
Given $\psi$, we denote the solution to the respective obstacle problem by $u = R^\psi = R^\psi_\cA$, omitting the subscript $\cA$ if it is unambiguous from the context.
\end{definition}
From \cite[Theorem 3.1]{KKS-Obstacle}, we infer the following existence result.
Note that \cite{KKS-Obstacle} deals with more general regularity conditions on $\Omega_{t_1,t_2}$.
\begin{lemma}
\label{lem:obstacle-existence}
Let $\Omega_{t_1,t_2} \subset \R^{n+1}$ be a space-time cylinder with smooth lateral boundary, and assume that $\psi \colon \overline{\Omega_{t_1,t_2}} \to \R$ is continuous.
Then, there exists a unique solution $u = R^\psi$ to the obstacle problem \eqref{eq:obstacle-problem} in $\Omega_{t_1,t_2}$ in the sense of Definition \ref{def:obstacle-problem}.
\end{lemma}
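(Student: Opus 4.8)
The plan is to obtain uniqueness directly from the minimality property~\emph{iv)} in Definition~\ref{def:obstacle-problem}, and to construct a solution by a penalization scheme whose qualitative properties are identified in the limit by means of the stability result for supersolutions, Lemma~\ref{lem:sequences-supersolutions}, and the comparison principle, Lemma~\ref{lem:standard-comparison}. \emph{Uniqueness.} If $u_1$ and $u_2$ both solve \eqref{eq:obstacle-problem} with obstacle (and boundary datum) $\psi$, then by~\emph{i)}--\emph{ii)} each $u_i$ is a weak supersolution to~\eqref{eq_main} in $\Omega_{t_1,t_2}$ with $u_i\ge\psi$. Applying~\emph{iv)} for $u_1$ to the competitor $v=u_2$ yields $u_1\le u_2$, and by symmetry $u_2\le u_1$, so $u_1=u_2$; minimality already encodes the boundary comparison, so no separate argument at $\partial_\mathrm{par}\Omega_{t_1,t_2}$ is needed.

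\emph{Existence by penalization.} Choose smooth functions $\psi_j$ on $\overline{\Omega_{t_1,t_2}}$ with $\psi_j\uparrow\psi$ uniformly. For fixed $j$ and $\ve>0$, let $u_{j,\ve}$ solve the penalized Cauchy--Dirichlet problem
\[
\partial_t u_{j,\ve}-\div\cA(x,t,\nabla u_{j,\ve})=\tfrac{1}{\ve}\,(\psi_j-u_{j,\ve})_+\ \text{ in }\Omega_{t_1,t_2},\qquad u_{j,\ve}=\psi_j\ \text{ on }\partial_\mathrm{par}\Omega_{t_1,t_2},
\]
which exists by monotone operator methods since the zero-order term is non-increasing in $u_{j,\ve}$. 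Comparison with the constants $\inf\psi$ and $\sup\psi$ gives $\lvert u_{j,\ve}\rvert\le M$ with $M$ independent of $j$ and $\ve$, while testing with $u_{j,\ve}-\psi_j$ and using~\eqref{as:growth} gives, for fixed $j$ and uniformly in $\ve$, a bound for $u_{j,\ve}$ in $L^p(t_1,t_2;W^{1,p}(\Omega))$ together with $\ve^{-1}\lVert(\psi_j-u_{j,\ve})_+\rVert_{L^2}^2\le C_j$. Each $u_{j,\ve}$ is in particular a bounded weak supersolution of~\eqref{eq_main}, so by standard parabolic compactness we may extract an a.e.\ limit as $\ve\downarrow0$, and Lemma~\ref{lem:sequences-supersolutions} shows this limit $u_j$ is a weak supersolution in $\Omega_{t_1,t_2}$, with $u_j\ge\psi_j$ (from the penalization bound), equal to $\psi_j$ on $\partial_\mathrm{par}\Omega_{t_1,t_2}$, and solving~\eqref{eq_main} on $\{u_j>\psi_j\}$. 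Finally, any weak supersolution $v$ with $v\ge\psi\ge\psi_j$ everywhere is also a supersolution of the penalized equation, so the standard comparison argument — testing the difference with $(u_{j,\ve}-v)_+$ and using~\eqref{as:monotonicity-weaker} — gives $u_{j,\ve}\le v$, hence $u_j\le v$; taking $v\equiv\sup\psi$ shows the $u_j$ are uniformly bounded, and $\psi_j\le\psi_{j+1}$ gives $u_j\le u_{j+1}$.

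\emph{Passage to the limit and regularity — the main obstacle.} Since $(u_j)$ is increasing and uniformly bounded, $u=\lim_j u_j$ exists, and Lemma~\ref{lem:sequences-supersolutions} again shows it is a weak supersolution in $\Omega_{t_1,t_2}$ with $\nabla u_j\to\nabla u$ in $L^q_\loc$ for $q<p$; moreover $u\ge\lim_j\psi_j=\psi$, which gives~\emph{i)} and~\emph{ii)}. On any open set where $u>\psi$ one has $u_j>\psi_j$ eventually, so $u_j$, and hence $u$ in the limit, solves~\eqref{eq_main} there, yielding~\emph{iii)}; and~\emph{iv)} follows from $u_j\le v$ for every supersolution $v\ge\psi$. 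It remains to verify that $u$ extends continuously to $\overline{\Omega_{t_1,t_2}}$ and attains $\psi$ on $\partial_\mathrm{par}\Omega_{t_1,t_2}$, and this is the genuine difficulty. Interior continuity of the $u_{j,\ve}$, the $u_j$ and $u$ rests on the De Giorgi/Moser regularity theory in intrinsic cylinders for (super)solutions of degenerate parabolic equations of $p$-growth — with bounded right-hand sides at the penalized level — together with the Caccioppoli inequality Lemma~\ref{lem:Cacciopoli}; continuity up to the lateral boundary and at the initial time is obtained via barriers, which is precisely where the smoothness of the lateral boundary of $\Omega_{t_1,t_2}$ enters. The delicate point is that all these estimates must be quantified with a modulus of continuity that is \emph{uniform in $j$ and $\ve$}, for only then does $\psi_j\uparrow\psi$ pass through the double limit and force $\lim_{z\to\xi}u(z)=\psi(\xi)$ at each $\xi\in\partial_\mathrm{par}\Omega_{t_1,t_2}$. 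A potential-theoretic alternative to the penalization scheme is to define $u$ as the lower semicontinuous regularization of the infimum of all superparabolic functions lying above $\psi$ with the appropriate boundary behaviour, using the balayage theory of~\cite{KinnunenLukkariPariviainen2010}; there too the continuity of the resulting reduced function is the crux.
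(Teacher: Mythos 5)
The paper does not supply a proof of this lemma; it is imported directly from \cite[Theorem 3.1]{KKS-Obstacle}, so there is no in-paper argument to compare yours against. Your uniqueness argument is correct and essentially the shortest possible: condition iv) of \Cref{def:obstacle-problem}, applied with $v = u_2$ and then (by symmetry) with $v = u_1$, forces $u_1 = u_2$, with no separate boundary comparison needed.

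Your existence sketch by penalization is a legitimate and standard route, but it has a genuine gap exactly where you yourself locate it, and that gap is the actual content of the cited theorem. To obtain $u \in \Cz(\overline{\Omega_{t_1,t_2}})$ with $u = \psi$ on $\partial_\mathrm{par}\Omega_{t_1,t_2}$ (property i)) you need a modulus of continuity for $u_{j,\ve}$, interior and up to the parabolic boundary, that is uniform in both $\ve$ and $j$. For degenerate $p$-growth operators this is not a routine invocation of De Giorgi--Moser iteration plus barriers: the interior oscillation estimates must be run in the intrinsic geometry and must not degenerate as the penalization term $\ve^{-1}(\psi_j-u_{j,\ve})_+$ blows up, and the boundary estimates must exploit the smoothness of the lateral boundary quantitatively. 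Without that uniform equicontinuity, several downstream steps in your sketch also fail to close: the bound $\ve^{-1}\lVert(\psi_j-u_{j,\ve})_+\rVert_{L^2}^2\le C_j$ gives $u_j\ge\psi_j$ only almost everywhere, and promoting it to everywhere, as well as verifying property iii) (that $u$ solves \eqref{eq_main} on $\{u>\psi\}$) from the corresponding property of the $u_j$, both rely on locally uniform convergence $u_j\to u$ and $\psi_j\to\psi$, which again hinges on the missing continuity estimates. So the architecture of the argument is sound and the minimality/comparison steps are correct, but the hard analytic core — exactly what \cite{KKS-Obstacle} carries out — is not established here.
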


\section{Proof of Theorem \ref{thm:main}}\label{sec:sufficiency}
The proof of Theorem~\ref{thm:main} is composed of several ingredients, presented mainly in Lemmas~\ref{lemma: general estimate for measure of supersol}-\ref{lemma: decay of measure}. A crucial step in the proof is to establish a decay estimate for the Riesz measure associated to the solution of the obstacle problem with H\"older continuous obstacle, see Lemma~\ref{lemma: decay of measure}. To this end, we provide estimates concerning supersolutions to~\eqref{eq_main}, expressed in Lemmas~\ref{lemma: general estimate for measure of supersol} and~\ref{lem:meas-inq}. We begin with the following estimate.
{
\begin{lemma}
\label{lemma: general estimate for measure of supersol}
    Let $\Omega \subset \mathbb{R}^n$ be open. Let $u$ be a non-negative and locally bounded supersolution of \eqref{eq_main} in $\Omega_T$ and let 
    \begin{equation}\label{eq:measure_prob}
        \mu = \partial_t u  - \div \cA (x,t,\nabla u) ,
    \end{equation}
    be the associated Riesz measure.
    Let $z_0 \in \Omega_T$ and $r > 0$ be such that $Q \coloneqq Q_{2r, (2r)^{\alpha}}(z_0) \Subset \Omega_T$.
    Then, there exists a positive constant $C = C(n, p, \nu, L)$ such that
    \begin{equation*}
        \mu \left(Q_{r, r^{\alpha}}(z_0) \right) \leq Cr^n \left(r^{(2-p)\alpha}\sup_Q u^{p-1} + r^{\frac{(2-p)\alpha}{p}}\sup_Q u^{\frac{2(p-1)}{p}} + \sup_Q u\right)\,.
    \end{equation*}
\end{lemma}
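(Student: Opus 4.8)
The plan is to estimate $\mu\big(Q_{r,r^\alpha}(z_0)\big)$ by testing the measure equation \eqref{eq:measure_prob} against a suitable cut-off function, and then to control the two resulting terms — one involving $u$ itself, one involving $\nabla u$ — via the volumes of the cylinders and the Caccioppoli inequality \Cref{lem:Cacciopoli}. Throughout, set $M \coloneqq \sup_Q u$, which is finite since $\overline{Q}\subset\Omega_T$ is compact and $u$ is locally bounded, and abbreviate $s\coloneqq p+\alpha(2-p)$, so that $Q_{\rho,\rho^\alpha}(z_0)=B(x_0,\rho)\times(t_0-\rho^s,t_0+\rho^s)$. The structural observation that keeps everything clean is that $2\le p$ and $0<\alpha\le 1$ force $s\in[2,p]$; consequently all geometric constants below (ratios of volumes and gaps of the nested cylinders $Q_{r,r^\alpha}\subset Q_{\frac32 r,(\frac32 r)^\alpha}\subset Q=Q_{2r,(2r)^\alpha}$, powers of $2$ and of $3/2$, and so on) are bounded purely in terms of $n$ and $p$, so the final constant does not depend on $\alpha$.

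First I would fix cut-offs $\eta,\eta_1\in C^\infty_c(\R^{n+1})$ with $0\le\eta,\eta_1\le 1$, with $\eta\equiv 1$ on $Q_{r,r^\alpha}$ and $\supp\eta\subset Q_{\frac32 r,(\frac32 r)^\alpha}$, with $\eta_1\equiv 1$ on $Q_{\frac32 r,(\frac32 r)^\alpha}$ and $\supp\eta_1\subset Q$, and satisfying $|\nabla\eta|+|\nabla\eta_1|\le Cr^{-1}$ and $|\partial_t\eta|+|\partial_t\eta_1|\le Cr^{-s}$; such functions exist because the spatial and temporal gaps between consecutive cylinders are comparable to $r$ and to $r^s$, respectively. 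Since $u$ is a bounded supersolution on $Q$, \Cref{lem:Cacciopoli} already gives $\nabla u\in L^p_{\loc}$, so that testing \eqref{eq:measure_prob} against $\eta$ (cf.\ \Cref{def:solution-measure-data}) and using $\eta\ge 0$, $\eta\equiv 1$ on $Q_{r,r^\alpha}$, and $\mu\ge 0$ yields
\[
\mu\big(Q_{r,r^\alpha}(z_0)\big)\;\le\;\int_Q\eta\dmu\;=\;\int_Q\big(-u\,\partial_t\eta+\cA(x,t,\nabla u)\cdot\nabla\eta\big)\dx\dt\;=\;I+II,
\]
where $I$ and $II$ denote the two integrals on the right. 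For $I$, bounding $|u|\le M$ and $|\partial_t\eta|\le Cr^{-s}$ on $\supp\eta\subset Q_{\frac32 r,(\frac32 r)^\alpha}$, whose volume is $\le Cr^{n+s}$, gives $|I|\le Cr^nM$, which is exactly the last term of the claimed estimate. For $II$, the upper bound in \eqref{as:growth} together with $|\nabla\eta|\le Cr^{-1}$ gives $|II|\le \tfrac{C}{r}\int_{Q_{\frac32 r,(\frac32 r)^\alpha}}|\nabla u|^{p-1}\dx\dt$, and Hölder's inequality turns this into $\tfrac{C}{r}\,|Q_{\frac32 r,(\frac32 r)^\alpha}|^{1/p}\big(\int_{Q_{\frac32 r,(\frac32 r)^\alpha}}|\nabla u|^p\dx\dt\big)^{(p-1)/p}$.

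It then remains to bound $\int_{Q_{\frac32 r,(\frac32 r)^\alpha}}|\nabla u|^p$, for which I would apply \Cref{lem:Cacciopoli} on $Q$ with the cut-off $\eta_1$ and with $-M\le u\le M$: since $|\partial_t\eta_1^p|\le p|\partial_t\eta_1|\le Cr^{-s}$ and $|\nabla\eta_1|^p\le Cr^{-p}$, both supported in $Q$ with $|Q|\le Cr^{n+s}$, one gets
\[
\int_{Q_{\frac32 r,(\frac32 r)^\alpha}}|\nabla u|^p\dx\dt\;\le\;CM^2r^n+CM^pr^{\,n+\alpha(2-p)}\;=\;Cr^n\big(M^2+M^pr^{\alpha(2-p)}\big).
\]
Substituting this into the Hölder bound for $II$, using the elementary inequality $(a+b)^{(p-1)/p}\le a^{(p-1)/p}+b^{(p-1)/p}$ and $|Q_{\frac32 r,(\frac32 r)^\alpha}|\le Cr^{n+s}$, and then simplifying the powers of $r$ with the identities $s/p=1+\alpha(2-p)/p$ and $s+\alpha(2-p)(p-1)=p\big(1+\alpha(2-p)\big)$, the term $II$ is bounded by $Cr^n\big(r^{\alpha(2-p)/p}M^{2(p-1)/p}+r^{\alpha(2-p)}M^{p-1}\big)$. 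Adding the bounds for $I$ and $II$ and recalling $M=\sup_Q u$ (so that $M^{p-1}=\sup_Q u^{p-1}$ and $M^{2(p-1)/p}=\sup_Q u^{2(p-1)/p}$ by monotonicity of $t\mapsto t^{p-1}$ on $\rp$) gives the assertion. The whole computation is elementary once the cut-offs are in place; the only genuinely delicate points are the exponent bookkeeping in this last step and the observation that, because $s\in[2,p]$, none of the geometric constants — hence not the final $C$ — depends on $\alpha$.
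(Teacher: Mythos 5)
Your proof is correct and follows essentially the same route as the paper: test the measure equation against a cut-off, bound the time term by the volume, and bound the gradient term via Hölder plus the Caccioppoli inequality of \Cref{lem:Cacciopoli}. The only cosmetic difference is that the paper tests with $\eta^p$ (so that the Caccioppoli bound applies directly with the same cut-off), whereas you test with $\eta$ and introduce a second cut-off $\eta_1$ on the intermediate cylinder before invoking Caccioppoli; the exponent bookkeeping and the final estimate are identical.
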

\begin{proof}
Let $\eta$ be a standard cut-off function such that $0 \leq \eta \leq 1$, $\eta \equiv 1$ on $Q_{r, r^{\alpha}}(z_0)$, $\supp \eta \subseteq Q$, and $|\nabla \eta| \leq \frac{c}{r}$, $|\partial_t \eta| \leq \frac{c}{r^{p + (2-p)\alpha}}$ for a constant $c = c(n)$. Then, by the definition of $u$ and Assumption \eqref{as:growth} on $\cA$, we have
    \begin{equation}\label{eq:sep25-1}
    \begin{split}
        \mu \left(Q_{r, r^{\alpha}}(z_0) \right) \leq \int_{Q} \eta^p d\mu &= \int_{Q} p\eta^{p-1} \cA(x, t, \nabla u) \cdot \nabla \eta \dx \dt - \int_{Q} pu\eta^{p-1} \partial_t \eta \dx \dt\\
        &\leq C\int_Q  \eta^{p-1}|\nabla u|^{p-1}|\nabla \eta| + u|\partial_t \eta|  \dx \dt\,,
    \end{split}
    \end{equation}
    where the first inequality holds since $u$ is a weak solution of \eqref{eq:measure_prob} in the sense of Definition \ref{def:solution-measure-data}. Furthermore, by the bound on $\partial_t\eta$, we have
    \begin{equation}\label{eq:sep25-2}
        \int_{Q} u|\partial_t \eta|\dx\dt \leq Cr^n \sup_Q u\,.
    \end{equation}
    On the other hand, H\"older's inequality together with Lemma~\ref{lem:Cacciopoli} gives
    \begin{align*}
        \int_Q \eta^{p-1}|\nabla u|^{p-1}|\nabla \eta|\dx\dt &\leq
        \left( \int_Q \eta^p |\nabla u|^p \dx \dt \right)^{\frac{p-1}{p}}\left( \int_Q |\nabla \eta|^p \dx \dt \right)^{\frac{1}{p}}\\
        &\leq Cr^{\frac{n + (2-p)\alpha}{p}} \left(\sup_Q u^{p} \int_{Q} |\nabla \eta|^p\dx \dt + \sup_Q u^{2}\int_Q |\partial_t \eta^p| \dx \dt\right)^{\frac{p-1}{p}}\\
        &\leq Cr^{n+ (2-p)\alpha}\sup_Q u^{p-1} + Cr^{n + \frac{(2-p)\alpha}{p}}\sup_Q u^{\frac{2(p-1)}{p}}\,.
    \end{align*}
    Combining the last estimate with~\eqref{eq:sep25-1} and~\eqref{eq:sep25-2} yields the desired conclusion.
\end{proof}
}
Truncations of supersolutions to~\eqref{eq_main} are themselves supersolutions, see \cite[Lemma 3.2]{kortekuusiparviainen2010-superparabolic} for the proof in our setting.
Tracing the two Riesz measures in this proof produces the estimate in the following lemma. For the convenience of the reader, we give the full proof.
\begin{lemma}\label{lem:meas-inq}
Let $\Omega \subset \mathbb{R}^n$ be open. Let $u$ be a continuous supersolution to \eqref{eq_main} in $\Omega_T$ and let 
\begin{equation*}
    \mu = \partial_t u  - \div \cA (x,t,\nabla u)
\end{equation*}
be the associated Riesz measure. Let $k\in\mathbb R$. Then, $u_k=\min(u,k)$ is a supersolution to \eqref{eq_main}. Additionally, let
\begin{equation}\label{eq:nov10-1}
    \mu_k=\partial_t u_k -\div \mathcal A(x,t,\nabla u_k)
\end{equation}
be the Riesz measure of $u_k$. Then, for any $\varphi\in \Cz^\infty_0(\Omega_T)$, it holds
\begin{equation}
    \label{eq: ineq for measure of cutoff supersol}
    \int_{\Omega_T} \chi_{\{u<k\}}\varphi\dmu \leq \int_{\Omega_T} \varphi \d\mu_k\,.
\end{equation}

\end{lemma}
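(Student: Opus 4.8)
The starting point is that $u_k$ is again a supersolution to \eqref{eq_main}, which is \cite[Lemma~3.2]{kortekuusiparviainen2010-superparabolic} and may be taken for granted here; in particular $u,u_k\in L^p_{\loc}(0,T;W^{1,p}_{\loc}(\Omega))$ and $\mu_k$ in \eqref{eq:nov10-1} is a genuine non-negative Radon measure, cf.\ the discussion after Definition~\ref{def:solution-measure-data}. The plan is to prove the cleaner, slightly stronger statement
\[
\mu_k\ \ge\ \chi_{\{u<k\}}\,\mu\qquad\text{as Radon measures on }\Omega_T ;
\]
then \eqref{eq: ineq for measure of cutoff supersol}, read for non-negative $\varphi$ (the relevant case), follows at once from $\int_{\Omega_T}\chi_{\{u<k\}}\varphi\d\mu=\int_{\Omega_T}\varphi\d(\chi_{\{u<k\}}\mu)\le\int_{\Omega_T}\varphi\d\mu_k$. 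Everything rests on one soft remark: the Riesz measure of a supersolution is a local object, so it suffices to identify $\mu_k$ on the open set $\{u<k\}$, where $u_k$ and $u$ literally coincide and no truncation occurs.

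Carrying this out: since $u$ is continuous, $V:=\{u<k\}$ is open, and on $V$ we have $u_k=u$ pointwise, hence $\nabla u_k=\nabla u$ a.e.\ on $V$. Thus, for every $\psi\in C^\infty_c(V)$, the two right-hand sides obtained by inserting $\psi$ into the weak formulation of Definition~\ref{def:solution-measure-data} for $u$ and for $u_k$ agree, whence $\int_{\Omega_T}\psi\d\mu=\int_{\Omega_T}\psi\d\mu_k$ for all such $\psi$; by the uniqueness in the Riesz representation theorem, $\mu\mres V=\mu_k\mres V$. Since $\mu_k$ is non-negative, $\mu_k\ge\mu_k\mres V=\mu\mres V=\chi_{\{u<k\}}\mu$, which is the claim. (More precisely: as $\cA(x,t,0)=0$ by \eqref{as:growth} and $u_k\equiv k$ on the open set $\{u>k\}$, the same formulation forces $\mu_k$ to vanish there, so the excess mass $\mu_k-\chi_{\{u<k\}}\mu$ is concentrated on $\{u=k\}$.)

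What keeps the statement from being purely formal --- and, presumably, why the paper flags it --- is the parabolic time derivative: $\partial_t u$ is only a measure, so one cannot directly test the equation for $u$ with $\beta_\varepsilon(u)\varphi$, where $\beta_\varepsilon$ is a smooth non-increasing regularization of $s\mapsto\chi_{(-\infty,k]}(s)$, and integrate by parts in time. The route matching the phrase ``tracing the Riesz measures'' in \cite{kortekuusiparviainen2010-superparabolic} is to carry that computation out rigorously through time mollification (Steklov averages) --- test with $\beta_\varepsilon(u_h)\varphi$ in the mollified equation and let $h\to0$ and then $\varepsilon\to0$ while keeping track of both $\mu$ and $\mu_k$ --- where the one delicate point is the passage to the limit in the time term. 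The localization argument above sidesteps the mollification entirely, at the cost of invoking the Riesz representation theorem, and I expect it to give the shorter write-up.
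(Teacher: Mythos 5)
Your argument is correct, and it is a genuinely different—and considerably shorter—route than the one in the paper. The paper's proof runs the full Steklov-mollification machinery: it tests the mollified equation with $\varphi\,v_\ell(u^h)$, where $v_\ell(s)=\min(\ell(k-s)_+,1)$, tracks all three resulting terms through the limit $h\to0$, and then sends $\ell\to\infty$. This does two things at once: it (re)establishes that $\min(u,k)$ is a weak supersolution, and it produces the measure inequality as a byproduct of identifying $\int\varphi\,\d\mu_k$ with $\int-\min(u,k)\partial_t\varphi+\cA(x,t,\nabla\min(u,k))\cdot\nabla\varphi\dx\dt$. Your argument instead \emph{accepts} the supersolution property of $u_k$ as given (which is exactly how the paper introduces the lemma anyway, via the citation to Korte--Kuusi--Parviainen), and then exploits locality: on the open set $V=\{u<k\}$ one has $u_k=u$ and hence $\nabla u_k=\nabla u$ a.e., so inserting $\psi\in C_c^\infty(V)$ into the two weak formulations yields the same number, giving $\mu\mres V=\mu_k\mres V$ by density of $C_c^\infty(V)$ in $C_c(V)$ and Riesz representation; non-negativity of $\mu_k$ then gives $\mu_k\ge\mu\mres V=\chi_{\{u<k\}}\mu$, which is the claim for non-negative $\varphi$. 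Both proofs implicitly or explicitly restrict to non-negative test functions (the paper takes $\varphi\ge0$ in its proof, and the conclusion cannot hold for signed $\varphi$ unless $\mu_k=\chi_{\{u<k\}}\mu$); this is a minor looseness in the lemma's phrasing, not in your argument. What the paper's longer proof buys is self-containedness and an explicit verification of the supersolution property within the present framework; what yours buys is brevity and a sharper statement ($\mu_k\ge\chi_{\{u<k\}}\mu$, with equality on $\{u<k\}$ and $\mu_k=0$ on $\{u>k\}$), at the cost of leaning on the external reference for the supersolution property of the truncation. For the intended application in Lemma~\ref{lemma: decay of measure}, either form suffices.
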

\begin{proof} 
    In view of the limited regularity of supersolutions to \eqref{eq_main}, we shall employ Steklov averages in the proof. For a given function $w \in L^p_{\loc}(0, T, W^{1, p}_{\loc}(\Omega))$ and $h \in (0, T)$, we define
\[
w^h(x,t) \coloneqq \begin{cases}\dashint_t^{t+h}w(x,\tau)\,d\tau\, \quad &\text{for } (x,t)\in\Omega\times(0,T-h)\,,\\
0 \, \quad &\text{for } (x,t)\in\Omega\times[T-h,T)\,.
\end{cases}
\]
In what follows, we shall repetitively use the fact that
\begin{equation}\label{eq:conv}
    \text{if $w \in L^p_{\loc}(0, T; W^{1, p}_{\loc}(\Omega))$, then $w^h \xrightarrow{h \to 0} w$ in $L^p_{\loc}(0, T; W^{1, p}_{\loc}(\Omega))$\,.}
\end{equation}
We also observe that for a.e.~$(x, t) \in \Omega \times (0, T-h)$, we have
\begin{equation}\label{eq:oct20-1}
\nabla u^h(x, t)=[\nabla u]^h(x, t) \quad  \text{and} \quad \partial_t u^h(x, t)=\frac{u(x,t+h)-u(x,t)}{h}\,.
\end{equation}
By $[\cA(\cdot,\cdot,\nabla u)]^h$, we shall denote the Steklov average of the function $(x, t) \mapsto \cA(x,t,\nabla u(x, t))$.
Let $h \in (0, T/2)$ and $\phi\in C_c^{\infty}(\Omega \times (h, T-h))$. Integrating by parts and using the definition of $\mu$, gives
\begin{equation}
\label{eq:steklov-ineq}
\begin{split}
&\int_{\Omega_T} -u^h \,\partial_t\phi + [\cA(x,t,\nabla u)]^h\cdot\nabla\phi \dx\dt \\
    &= \int_{\Omega} \int_{h}^{T-h} -u^h \,\partial_t\phi + [\cA(x,t,\nabla u)]^h\cdot \partial_t \left(\int_0^t \nabla\phi(x,s) \,\ds\right) \dt\dx \\
    &= \int_{\Omega} \int_{h}^{T-h} \frac{u(x, t+h) - u(x, t)}{h}\phi\dx\dt \\
    & \quad - \int_{\Omega} \int_{h}^{T-h}\frac{\cA(x, t+h, \nabla u(x, t+h)) - \cA(x, t, \nabla u(x, t))}{h} \left( \int_{0}^{t} \nabla \phi(x, s)\ds\right) \dt\dx \\
    &= \int_{\Omega} \int_{h}^{T} -u \frac{\phi(x, t) - \phi(x, t-h)}{h} + \cA(x, t, \nabla u) \left( \dashint_{t-h}^{t} \nabla \phi(x, s)\ds \right) \dt\dx \\
    & = \int_{\Omega_{h,T}} \dashint_{t-h}^t \phi(x, s) \ds\d\mu\,.    
\end{split}
\end{equation}
For every $\ell\in\N$, we denote
\[
v_\ell(s)=\min(\ell (k-s)_+,\,1), \qquad s\in\R\,.
\]
Note that $0\le v_\ell\le1$ and $v'_\ell(s)=-\ell\,\chi_{\{k-1/\ell<s<k\}}$ a.e.~in $\R$. Let $h_0 \in (0, T/2)$, and let $\varphi\in C_c^\infty(\Omega \times (h_0, T-h_0))$ be non-negative. For $h \in (0, h_0),$ we introduce the function
\[
\Psi_{h,\ell} \coloneqq \varphi\, v_\ell(u^h)\,.
\]
Since $v_\ell$ is Lipschitz, $u^h$ is absolutely continuous with respect to~$t$, and $\nabla u^h(\cdot, t)\in L^p_{\loc}(\Omega)$ for every $t$, we have
\begin{equation}\label{eq:oct20-2}
\partial_t v_\ell(u^h)=v_\ell'(u^h)\,\partial_t u^h, 
\qquad \nabla v_\ell(u^h)=v_\ell'(u^h)\,[\nabla u]^h.
\end{equation}
Further, since $h < h_0$ and $u$ is continuous, we conclude that $u^h$ is bounded on the support of $\varphi$. From~\eqref{eq:oct20-1} and~\eqref{eq:oct20-2}, we conclude that $\Psi_{h, \ell}$ is Lipschitz continuous with respect to $t$ and $\Psi_{h, \ell} \in L^p(0, T;W^{1, p}_{\loc}(\Omega))$.
Note also that $\Psi_{h, \ell} \in C_c(\Omega \times (h_0, T-h_0))$. Thus, approximating $\Psi_{h,\ell}$ uniformly, and $\partial_t \Psi_{h, \ell}$ and $\nabla \Psi_{h, \ell}$ in $L^p(\Omega_T)$, by smooth functions, \eqref{eq:steklov-ineq} allows us to write
\begin{equation}\label{eq:main-ineq}
	\int_{\Omega_T}-u^h\partial_t\Psi_{h, \ell}
	+ [\cA(\cdot,\cdot,\nabla u)]^h\cdot\nabla \Psi_{h, \ell}\dx\dt
	\ = \int_{\Omega_{h,T}} \dashint_{t-h}^t \Psi_{h, \ell}(x, s)\ds\d\mu\,.
\end{equation}
Now, we pass to the limit $h \to 0$ in the terms of \eqref{eq:main-ineq} separately.
As for the right-hand side of~\eqref{eq:main-ineq}, we observe that by continuity of $u$ and compactness of $\supp \varphi$, we have that
\begin{equation*}
   \dashint_{t-h}^{t} \Psi_{h, \ell}\dx \xrightarrow{h \to 0} \vp v_\ell(u) \quad \text{uniformly in $(x, t)\,$.}
\end{equation*}
Therefore, as $\mu$ is finite on $\supp \vp$, and $\supp \Psi_{h, \ell} \subset \Omega \times (h_0,T-h_0)$, we conclude
\begin{equation}\label{eq:sep29-1}
    \lim_{h \to 0} \int_{\Omega_{h,T}} \dashint_{t-h}^t \Psi_{h, \ell}(x, s)\ds\d\mu
    = \lim_{h \to 0} \int_{\Omega_T} \dashint_{t-h}^t \Psi_{h, \ell}(x, s)\ds\d\mu
    = \int_{\Omega_T} \varphi v_{\ell}(u)\d\mu\,.
\end{equation}
By expanding $\nabla\Psi_{h, \ell}$ using the product rule, we may write the left-hand side of~\eqref{eq:main-ineq} as
\begin{equation}
\label{eq:sep29-5}
\begin{split}
    &\int_{\Omega_T}-u^h\partial_t\Psi_{h, \ell}
	+ [\cA(\cdot,\cdot,\nabla u)]^h\cdot\nabla \Psi_{h, \ell}\dx\dt \\
    &= \int_{\Omega_T} -u^h\,\partial_t \Psi_{h, \ell}\dx\dt + \int_{\Omega_T} [\cA(\cdot,\cdot,\nabla u)]^h\cdot \big(v_\ell(u^h)\nabla\varphi\big)\dx\dt \\
    & \quad + \int_{\Omega_T} [\cA(\cdot,\cdot,\nabla u)]^h\cdot \big(\varphi\nabla v_\ell(u^h)\big)\dx\dt  \\
    &\eqqcolon \mathrm{I}_{h,\ell} + \mathrm{II}_{h,\ell} + \mathrm{III}_{h,\ell}\,.
\end{split}
\end{equation}
We proceed by estimating the aforementioned three terms, starting with $\mathrm{I}_{h,\ell}$. As $\Psi_{h, \ell}$ has compact support and is absolutely continuous with respect to~$t$, it holds that
\begin{equation*}
    \int_{\Omega_T} \partial_t \Psi_{h, \ell}\dx\dt = 0\,.
\end{equation*}
Thus, by summing and subtracting $\int_{\Omega_T} k \partial_t \Psi_{h, \ell}\dx\dt$, we have
\begin{equation}\label{eq:sep30-1}
\begin{split}
    \mathrm{I}_{h,\ell}
	& =\int_{\Omega_T} (k-u^h)\partial_t \Psi_{h, \ell}\dx\dt \\
    & = \int_{\Omega_T} (k-u^h)\,\varphi\,v'_\ell(u^h)\,\partial_t u^h\dx\dt + \int_{\Omega_T} (k-u^h)v_\ell(u^h)\,\partial_t \varphi\dx\dt\,.
\end{split}
\end{equation}
On the support of $v'_\ell(u^h)$, one has $k-u^h=\tfrac1\ell v_\ell(u^h)$, which gives us that
\begin{equation}
\label{eq:sep30-2}
\begin{split}
    \int_{\Omega_T} (k-u^h)\,\varphi\,v'_\ell(u^h)\,\partial_t u^h \dx\dt
    &=
    \int_{\Omega_T} \varphi\, v_\ell(u^h)\,\partial_t \left(v_\ell(u^h)\right) \dx\dt
    \\ & = 
    \frac{1}{2\ell}\int_{\Omega_T} \partial_t \big( \varphi\, v_\ell(u^h)^2 \big)\dx\dt
    -\frac{1}{2\ell}\int_{\Omega_T} (\partial_t\varphi)\,v_\ell(u^h)^2\dx\dt
     \\ &= -\frac{1}{2\ell}\int_{\Omega_T} (\partial_t\varphi)\,v_\ell(u^h)^2\dx\dt\,
\end{split}
\end{equation}
where in the last equality, we used the compactness of the support of $\varphi$ and the fact that $\vp v_\ell(u^h)^2$ is absolutely continuous with respect to~$t$. Note that
\begin{equation*}
    |(k-u^h)v_\ell(u^h)\,\partial_t \varphi| \leq \left(k + |u^h|\right)|\partial_t \vp|\,,
\end{equation*}
which is bounded on $\Omega_T$ by continuity of $u$. This fact combined with Dominated Convergence Theorem and equalities~\eqref{eq:sep30-1} and~\eqref{eq:sep30-2} gives
\begin{equation}\label{eq:sep29-2}
\limsup_{h \to \infty} \mathrm{I}_{h, \ell} \leq \int_{\Omega_T} (k-u)v_\ell(u)\,\partial_t \varphi\dx\dt + \frac{\|\partial_t \varphi\|_{L^\infty(\Omega_T)}}{2\ell}\,.
\end{equation}
Next, we estimate $\mathrm{II}_{h,\ell}$. By Assumption \eqref{as:growth}, we have that
\begin{equation*}
    \big| [\cA(\cdot,\cdot,\nabla u)]^h(x, t) \cdot \big(v_\ell(u^h(x, t))\nabla\varphi(x, t)\big) \big| \leq L|\nabla \varphi(x, t)| \dashint_{t}^{t+h} |\nabla u(x, s)|^{p-1}\ds\,.
\end{equation*}
Thus,~\eqref{eq:conv} in conjunction with Vitali Convergence Theorem allows us to conclude that
\begin{equation}\label{eq:sep29-3}
    \mathrm{II}_{h, \ell} \xrightarrow{h \to 0} \int_{\Omega_T} \cA(x,t,\nabla u) \cdot \big(v_\ell(u)\nabla\varphi\big)\dx\dt\,.
\end{equation}
Finally, to estimate $\mathrm{III}_{h,\ell}$, note that
\begin{equation*}
    \big|\,v'_\ell(u^h)\, [\cA(\cdot,\cdot,\nabla u)]^h\cdot[\nabla u]^h \varphi \big|
\leq \ell \left(\big| [\cA(\cdot,\cdot,\nabla u)]^h \big|^{\frac{p}{p-1}}+ \big|[\nabla u]^h \big|^p\right) \varphi\,,
\end{equation*}
and that Assumption \eqref{as:growth} on $\cA$ implies that $|\cA(\cdot, \cdot, \nabla u)|^{\frac{1}{p-1}} \in L^p_{\loc}(\Omega_T)$. Therefore, by~\eqref{eq:conv} and the Reverse Fatou Lemma, we obtain
\begin{equation}\label{eq:sep29-4}
\begin{split}
    \limsup_{h \to 0} \mathrm{III}_{h, \ell} & \leq \int_{\Omega_T} \limsup_{h \to 0}v'_\ell(u^h)\, \varphi \cA(\cdot,\cdot,\nabla u)\cdot \nabla u \dx\dt \\
    & \leq \nu \int_{\Omega_T} \limsup_{h \to 0}v'_\ell(u^h)\, \varphi|\nabla u|^p \dx\dt \leq 0\,,
\end{split}
\end{equation}
where the last two inequalities come from the fact that $v_{\ell}'$ is non-positive, while $\vp$ is non-negative.
Inserting \eqref{eq:sep29-2}--\eqref{eq:sep29-4} into \eqref{eq:sep29-5}, we find that
\begin{equation}
\label{eq:nov19-1}
\begin{split}
    &\limsup_{h \to 0} \int_{\Omega_T}-u^h\partial_t\Psi_{h, \ell} + [\cA(\cdot,\cdot,\nabla u)]^h\cdot\nabla \Psi_{h, \ell}\dx\dt \\ &\leq \int_{\Omega_T} (k-u)v_\ell(u)\,\partial_t \varphi + \cA(x,t,\nabla u) \cdot \big(v_\ell(u)\nabla\varphi\big)\dx\dt + \frac{\|\partial_t \varphi\|_{L^\infty(\Omega_T)}}{2\ell}\,.
    \end{split}
\end{equation}
Now, taking $\limsup$ as $h \to 0$ in \eqref{eq:main-ineq}, by means of \eqref{eq:sep29-1} and \eqref{eq:nov19-1}, we obtain
\begin{equation}\label{eq:sep29-6}
 \int_{\Omega_T} \varphi\,v_\ell(u)\d\mu\leq\int_{\Omega_T} (k-u)v_\ell(u)\,\partial_t \varphi + \cA(x,t,\nabla u) \cdot \big(v_\ell(u)\nabla\varphi\big)\dx\dt + \frac{\|\partial_t \varphi\|_{L^\infty(\Omega_T)}}{2\ell} \,.
\end{equation}
Note that $v_\ell(u) \to \chi_{\{u<k\}}$ a.e.~as $\ell \to \infty$. Thus, passing to the limit $\ell \to \infty$ in~\eqref{eq:sep29-6} gives 
\begin{equation}\label{eq:sep29-7}
    \int_{\Omega_T \cap \{u < k\}} \varphi\d\mu\leq \int_{\Omega_T \cap \{u < k\}} (k-u)\partial_t \varphi + \cA(x,t,\nabla u) \cdot \nabla\varphi\dx\dt\,.
\end{equation}
Observe that by \eqref{as:growth} we have $\cA(t,x,0) = 0$, and therefore 
\begin{equation}\label{eq:sep29-8}
    \chi_{\{u < k\}}\cA(x, t, \nabla u) = \cA(x, t, \nabla \min(u, k))\,.
\end{equation}
Moreover, since $\varphi$ is smooth and has compact support in $\Omega_T$, we have that
\begin{equation}\label{eq:sep29-9}
\begin{split}
    \int_{\Omega_T \cap \{u < k\}} (k-u)\partial_t \varphi\dx\dt &  = \int_{\Omega_T} \left(\chi_{\{u < k\}}(k-u) - k\right)\partial_t \varphi\dx\dt \\
    & = \int_{\Omega_T} -\min(u,k)\partial_t \varphi\dx\dt\,.
\textbf{}    
\end{split}
\end{equation}
By substituting equalities~\eqref{eq:sep29-8} and~\eqref{eq:sep29-9} in~\eqref{eq:sep29-7}, we obtain
\[
0 \leq \int_{\Omega_T} \chi_{\{u<k\}}\varphi\d\mu \leq \int_{\Omega_T} -\min(u,k)\,\partial_t\varphi + \cA(x,t,\nabla\min(u,k))\cdot\nabla\varphi\dx\dt\,.
\]
This implies that $\min(u,k)$ is a weak supersolution to \eqref{eq_main}, and the associated measure $\mu_k$ satisfies
\[
\int_{\Omega_T}\chi_{\{u<k\}}\varphi\dmu \ \le\ \int_{\Omega_T}\varphi\d\mu_k\,.
\]
This concludes the proof of the lemma.
\end{proof}
The next lemma allows estimating the decay of the Riesz measure of the solution to the obstacle problem, using the oscillation of the obstacle.
\begin{lemma}
\label{lemma: decay of measure}
Let $\Omega \subset \mathbb{R}^n$ be open, $K\subset \Omega_T$ be compact, and $\psi$ be continuous such that 
\begin{equation}\label{eq:Holder-cont}
    \lvert \psi(z_1)-\psi(z_2) \rvert \leq M d_\alpha^\alpha(z_1,z_2)
\end{equation}
for some $M>0$, and every $z_1\in K$ and $z_2\in \Omega_T$. Let $u=R^\psi$ and denote
\begin{equation*}
    \mu = \partial_t u -\div \mathcal{A}(x,t,\nabla u)\,.
\end{equation*}
Let $z_0\in K$ and $r>0$ be such that $Q_{2r, (2r)^{\alpha}}(z_0) \Subset \Omega_T$. There exists a constant $c=c(n,p,\nu,L,M)$ such that
\begin{equation*}
    \mu(Q_{r,r^\alpha}(z_0))\leq cr^{n+\alpha}\,.
\end{equation*}
\end{lemma}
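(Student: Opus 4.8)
The idea is to truncate $u=R^\psi$ at a level comparable to the oscillation of $\psi$ near $z_0$, and to play Lemma~\ref{lem:meas-inq} against Lemma~\ref{lemma: general estimate for measure of supersol}. Throughout write $Q\coloneqq Q_{2r,(2r)^\alpha}(z_0)$ and recall that $Q=\{z:d_\alpha(z,z_0)<2r\}$. First, by property iii) of Definition~\ref{def:obstacle-problem}, $u$ is a weak solution of \eqref{eq_main} in the open set $\{u>\psi\}$, hence $\mu(\{u>\psi\})=0$; since $u\ge\psi$, the measure $\mu$ is concentrated on the contact set $\{u=\psi\}$. Next comes a normalisation: adding a constant to the obstacle (and to the boundary datum) adds the same constant to the obstacle-problem solution, by uniqueness (Lemma~\ref{lem:obstacle-existence}), and alters neither $\mu$ nor the oscillation of $\psi$. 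Replacing $\psi$ by $\psi-\psi(z_0)+M(2r)^\alpha$, we may therefore assume $\psi(z_0)=M(2r)^\alpha$. Then \eqref{eq:Holder-cont}, used with $z_1=z_0\in K$, gives $\lvert\psi(z)-\psi(z_0)\rvert\le M(2r)^\alpha$ for all $z\in Q$, so that $0\le\psi\le 2M(2r)^\alpha$ on $Q$, and consequently $0\le u$ on $Q$ since $u\ge\psi$.

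Now fix the truncation level $k\coloneqq(4M+1)r^\alpha$; since $\alpha\le1$ we have $2M(2r)^\alpha\le 4Mr^\alpha<k$, so $\sup_Q\psi<k$, and we set $u_k\coloneqq\min(u,k)$. By Lemma~\ref{lem:meas-inq}, $u_k$ is a supersolution whose Riesz measure $\mu_k$ satisfies \eqref{eq: ineq for measure of cutoff supersol}, and $0\le u_k\le k$ on $Q$. Moreover every contact point $z\in Q$ obeys $u(z)=\psi(z)\le 2M(2r)^\alpha<k$, so that $Q\cap\{u=\psi\}\subseteq\{u<k\}$. Choosing a cutoff $\eta\in C^\infty_c(Q)$ with $0\le\eta\le1$, $\eta\equiv1$ on $Q_{r,r^\alpha}(z_0)$, $\lvert\nabla\eta\rvert\le c/r$ and $\lvert\partial_t\eta\rvert\le c/r^{p+(2-p)\alpha}$, and using that $\eta^p$ is supported in $Q$ while $\mu$ restricted to $Q$ lives on $Q\cap\{u=\psi\}\subseteq\{u<k\}$, we obtain
\begin{equation*}
    \mu\bigl(Q_{r,r^\alpha}(z_0)\bigr)\ \le\ \int_{\Omega_T}\eta^p\dmu\ =\ \int_{\Omega_T}\chi_{\{u<k\}}\,\eta^p\dmu\ \le\ \int_{\Omega_T}\eta^p\d\mu_k\,,
\end{equation*}
the last inequality being \eqref{eq: ineq for measure of cutoff supersol} with test function $\eta^p$.

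It remains to bound $\int\eta^p\d\mu_k$, and this is exactly the computation in the proof of Lemma~\ref{lemma: general estimate for measure of supersol}, carried out for $u_k$ in place of $u$: the cutoff there is supported in $Q$, so the only inputs are the growth condition \eqref{as:growth}, the Caccioppoli inequality on the cylinder $Q$, and the bound $0\le u_k\le k$ on $Q$ — all of which we have at our disposal. Inserting $\sup_Q u_k\le k=(4M+1)r^\alpha$ and using the identities $(2-p)+(p-1)=1$ and $(2-p)+2(p-1)=p$, the three terms produced there are each a constant multiple of $r^{n+\alpha}$; hence $\int\eta^p\d\mu_k\le c\,r^{n+\alpha}$ with $c=c(n,p,\nu,L,M)$, which is the assertion.

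The main point — and the place where the argument is more than a mechanical combination of Lemmas~\ref{lemma: general estimate for measure of supersol} and~\ref{lem:meas-inq} — is the coupled choice of the preliminary shift and the truncation level: centering $\psi$ at $z_0$ forces the single oscillation bound $\sup_Q\lvert\psi\rvert\lesssim r^\alpha$ to do two jobs at once, namely pushing the part of the contact set lying in $Q$ into the region $\{u<k\}$ (which is what makes Lemma~\ref{lem:meas-inq} usable) while simultaneously keeping $\sup_Q u_k\lesssim r^\alpha$ (which is what turns the estimate of Lemma~\ref{lemma: general estimate for measure of supersol} into the sharp exponent $n+\alpha$). A minor technical caveat is that after the shift $u_k$ is non-negative only on $Q$, and not on all of $\Omega_T$, so one invokes the proof of Lemma~\ref{lemma: general estimate for measure of supersol} rather than its statement, using that the estimate there is purely local on the support of the cutoff.
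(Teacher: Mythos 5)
Your proof is correct and follows essentially the same route as the paper: truncate $u$ at a level $\approx r^\alpha$ above the obstacle, observe that $\mu\mres Q$ is concentrated on the contact set (hence inside $\{u<k\}$), apply Lemma~\ref{lem:meas-inq} to pass to $\mu_k$, and then run the Caccioppoli computation of Lemma~\ref{lemma: general estimate for measure of supersol}. The differences (normalizing $\psi$ so that $\psi(z_0)=M(2r)^\alpha$ rather than subtracting $\inf_Q\psi$ after truncation, and being explicit that positivity and boundedness of the truncated function are only needed locally on $Q$) are cosmetic.
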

\begin{proof}
Let us denote $Q \coloneqq Q_{2r, (2r)^{\alpha}}(z_0)$ and
\begin{equation*}
    k \coloneqq \sup_{Q} \psi + r^{\alpha}, \quad v \coloneqq \min(u,k)-\inf_{Q} \psi\,.
\end{equation*}
Then, $v$ is a positive supersolution and we denote
\begin{equation*}
    \mu_v = \partial_t v - \div \mathcal A(x,t,\nabla v)\,.
\end{equation*}
From~\eqref{eq:Holder-cont}, we infer that
\begin{equation*}
    \sup_{Q} v \leq k - \inf_{Q} \psi = r^{\alpha} + \sup_{Q} \psi - \inf_{Q} \psi \leq Cr^\alpha\,,
\end{equation*} 
which by~\Cref{lemma: general estimate for measure of supersol} gives 
\begin{equation*}
    \mu_v (Q_{r,r^\alpha}(z_0)) \leq Cr^{n+\alpha}\,.
\end{equation*}
Recall that $u$ is continuous, as it is a solution to the obstacle problem. Note also that $\mu(Q\cap \{u \geq k\})=0$, as $u$ is $\cA$-harmonic on $\{u > \psi\}$. Hence, Lemma~\ref{lem:meas-inq} and the preceding estimate imply
\begin{equation*}
\mu(Q_{r, r^{\alpha}}(z_0))=\mu(Q_{r, r^{\alpha}} \cap \{u < k\}) \leq \mu_v(Q_{r, r^{\alpha}}(z_0))\leq Cr^{n+\alpha}\,,
\end{equation*}
which completes the proof.
\end{proof}

At this stage, we are ready to prove Theorem~\ref{thm:main}.
\begin{proof}[Proof of Theorem \ref{thm:main}]

Fix a cylinder $Q \Subset \Omega_T$ with smooth lateral boundary such that $E \cap Q \neq \emptyset$, and let $K \subset \big(E \cap Q \big)$ be compact.
By Lemma \ref{lem:obstacle-existence}, there exists a solution to the obstacle problem $v = R^u \big( Q \big)$ in the sense of Definition~\ref{def:obstacle-problem}.
Let $\mu$ be the measure associated  with $v$, that is,
$$
    \mu = \partial_t v - \div \cA(x,t,\nabla v)\,.
$$
Following the arguments in \cite[Section 6]{kuusi2014sharp}, we first prove that
\begin{equation}
    \supp(\mu) \subset (E \cap Q)\,.
    \label{eq:support-measure}
\end{equation}
To this end, consider a cylinder
\begin{equation*}
\widetilde{Q} \coloneqq B(x_0,\rho) \times (t_1,t_2) \subset (Q \setminus E)\,.
\end{equation*}
Since $v \in \Cz(\overline{Q})$, by Lemma \ref{lem:existence-Cauchy-Dirichlet}, there exists a unique weak solution $\tilde{v} \in \Cz(\widetilde{Q})$ to \eqref{eq_main} in $\widetilde{Q}$, in the sense of Definition \ref{weak_sol}, with boundary values $\tilde{v}=v$ on $\partial_\mathrm{par}\widetilde{Q}$.
On the one hand, since $v$ is a weak supersolution to \eqref{eq_main} in $\widetilde{Q}$, by the standard comparison principle in Lemma \ref{lem:standard-comparison}, we obtain that $v \geq \tilde{v}$ in $\widetilde{Q}$.
On the other hand, the following arguments show that $\tilde{v} \geq v$ in $\widetilde{Q}$:
Since $v$ solves the obstacle problem with obstacle function $u$, we have that $u \leq v = \tilde{v}$ on $\partial_\mathrm{par}\widetilde{Q}$.
Hence, applying Lemma \ref{lem:standard-comparison} to the weak solutions $u,\tilde{v}$ to \eqref{eq_main} in $\widetilde{Q}$, we find that $u \leq \tilde{v}$ in $\widetilde{Q}$.
Therefore, setting
\begin{equation*}
    F = \{z \in Q : u(z) = v(z)\}\,,    
\end{equation*}
we obtain that $v \leq \tilde{v}$ on $\left(\partial_\mathrm{par}\widetilde{Q}\right) \cup F$.
Observe that $v$ is a weak solution to \eqref{eq_main} in $\widetilde{Q} \setminus F$. Thus, applying the comparison principle in Lemma \ref{lem:elliptic-comparison} with $U= \widetilde{Q} \setminus F$ leads us to $v \leq \tilde{v}$ in $\widetilde{Q} \setminus F$, and thus to $v \leq \tilde{v}$ in $\widetilde{Q}$.
Altogether, at this point we conclude that $v = \tilde{v}$ in $\widetilde{Q}$, which in particular shows that $v$ is a weak solution to \eqref{eq_main} in $\widetilde{Q}$.
Since $\widetilde{Q} \subset (Q \setminus E)$ is arbitrary, this implies that $v$ is a weak solution to \eqref{eq_main} in $Q \setminus E$. This finishes the proof of \eqref{eq:support-measure}.

Now, we show that $\mu \equiv 0$ in $E \cap Q$.
To this end, since we assume that \eqref{eq:Holder-condition-E} holds true, applying Lemma \ref{lemma: decay of measure} yields
$$
    \mu \big( Q_{r,r^\alpha}(z_0) \big) \leq c r^{n+\alpha}
$$
for any $z_0 \in K$ and any radius $0 < r < r_0 = \min \left\{\frac{1}{2} \dist(K,\partial Q), \dist(K,\partial Q)^\frac{1}{\alpha} \right\}$.
Since $\mathcal{H}^{n+\alpha}_{(\alpha)}(K) \leq \mathcal{H}^{n+\alpha}_{(\alpha)}(E) = 0$, for any $\varepsilon>0$, there exists a family of cylinders $Q_{r_i,r_i^\alpha}(z_i)$, $i \in \N$, such that $x_i \in K$ and $0 < r_i < r_0$ for any $i \in \N$, and such that
$$
    \sum_{i=1}^\infty r_i^{n+\alpha} < \varepsilon\,.
$$
Combining the preceding two observations, we obtain that
$$
    \mu(K) \leq
    \sum_{i=1}^\infty \mu \big( Q_{r_i,r_i^\alpha}(z_i) \big)
    \leq
    c
    \sum_{i=1}^\infty r_i^{n+\alpha}
    < c \varepsilon\,.
$$
Since $\varepsilon$ is arbitrary, this proves that $\mu(K) = 0$. Since this holds for any compact set $K \subset (E \cap Q)$, we have that $\mu(E \cap Q) = 0$ due to inner regularity of $\mu$.
In turn, combining this with \eqref{eq:support-measure}, we conclude that $\mu \equiv 0$ in $Q$, which implies that $v$ solves \eqref{eq_main} in $Q$, see Section \ref{sec:supersolutions-measures}.

Next, we consider $w = -R^{-u}_{\widetilde{\cA}}$, where the operator $\widetilde{\cA} \colon \Omega_T \times \R^n \to \R^n$ is defined by $\widetilde{\cA}(x,t,\xi) = - \cA(x,t,-\xi)$.
Since $\widetilde{\cA}$ satisfies the structure conditions~\eqref{as:growth} and~\eqref{as:monotonicity-weaker}, existence is due to Lemma \ref{lem:obstacle-existence}.
Further, since \eqref{eq:Holder-condition-E} holds for $u$ replaced by $-u$, Lemma \ref{lemma: decay of measure} is applicable to $w$.
Therefore, by the same arguments as for $v$, we conclude that $w$ solves \eqref{eq_main} in $Q$.

At this stage, note that due to the comparison principle in Lemma \ref{lem:standard-comparison}, weak solutions to \eqref{eq_main} in $Q$ are unique.
Therefore, since $v=u=w$ on the parabolic boundary of $Q$, we conclude that $v=u=w$ in $Q$.
In particular, this implies that $u$ is a weak solution to \eqref{eq_main} in $Q$.
Since $Q$ is arbitrary, this concludes the proof of the theorem.
\end{proof}

\begin{remark}
    For simplicity of exposition, we only consider a modulus of continuity $\omega(r)=r^\alpha$. However, our method generalizes in a straightforward way to cover the more general class of $\omega$ as in \cite{kuusi2014sharp}, and gives the corresponding result.
\end{remark}

\section{Proof of Theorem \ref{thm: necessity of Hausdorff zero}}\label{sec:necessity}
Throughout this section, we shall assume that $\cA$ satisfies~\eqref{as:growth} and~\eqref{as:monotonicity-stronger}. Note, however, that the only place where the strict monotonicity is required is in \Cref{lemma: comparison lemma Mingione Kuusi}.

In the statement of \Cref{thm: necessity of Hausdorff zero}, the number $0<\kappa\leq 1$ is related to the Hölder regularity of weak solutions to \eqref{eq_main} in the following sense. We suppose there exists a constant $ \Tilde C>0$ such that for any $R,\omega, \lambda > 0$ and any weak solution $v$ to~\eqref{eq_main} in $Q_{R,\lambda R^\kappa} = Q_{R,\lambda R^\kappa}(z_0)$, it holds that if
\begin{equation}
    \label{eq: intrinsic param in kappa hölder cont} \lambda \geq \Tilde C  \omega/R^\kappa \,\,\,\text{ and } \,\,\, \osc_{Q_{R,\lambda R^\kappa}(z_0)}v \leq \omega\,, 
\end{equation}
then for all $0<r<R$ we have
\begin{equation}
    \label{eq: kappa hölder cont of sols}
    \osc_{Q_{r,\lambda r^\kappa}(z_0)}v  \leq \lambda r^\kappa\,.
\end{equation}
For $p=2$, the above amounts to 
\begin{equation*}
    \osc_{Q_{r}(x_0,t_0)} v \leq c \left(\frac{r}{R}\right)^\kappa \osc_{Q_{R}(x_0,t_0)} v\,, 
\end{equation*}
where $Q_r(x_0,t_0) = B(x_0,r) \times (t_0-r^2,t_0+r^2)$. However, for $p>2$, the intrinsic formulation is more appropriate. In the following proposition, we show that there exists $0<\kappa\leq 1$ and $\Tilde C>1$, that can be determined only in terms of the data, such that \eqref{eq: kappa hölder cont of sols} holds. We note, however, that in the rest of the article, we consider any pair  $(\kappa,\Tilde C)$ such that \eqref{eq: kappa hölder cont of sols} holds.
\begin{proposition}
\label{prop: Hölder regularity for solutions}
There exists $\Tilde C>1$ and $0<\kappa\leq 1$, depending only on $n$, $p$, $\nu$, and $L$, such that \eqref{eq: kappa hölder cont of sols} holds. 
\end{proposition}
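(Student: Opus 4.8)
The statement is the standard intrinsic Hölder regularity for weak solutions of the degenerate $p$-parabolic equation, so the plan is to recover it from the classical De Giorgi–Nash–Moser-type oscillation decay in intrinsic cylinders. Concretely, I would invoke the fundamental oscillation estimate from the intrinsic scaling theory (as in \cite{DiBenedetto1993, DiBenedettoGianazzaVespri2012, Urbano}): there exist constants $c_0 > 1$ and $\delta_0 \in (0,1)$, depending only on $n,p,\nu,L$, such that for any weak solution $v$ of \eqref{eq_main} in a cylinder $Q_{R,\lambda R^{\kappa}}(z_0)$ one has, upon setting $\omega_0 := \osc_{Q_{R,\lambda R^{\kappa}}(z_0)} v$, the implication that if $\lambda \geq c_0 \omega_0 / R^{\kappa}$ — i.e. the intrinsic geometry is ``adapted'' to the oscillation in the sense that $\omega_0 \lesssim \lambda R^{\kappa}$ — then on the shrunk cylinder one gains a fixed fraction of the oscillation. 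The key point is that in the degenerate case the cylinder $Q_{R,\lambda R^{\kappa}}$ with $\lambda \sim \omega_0/R^\kappa$ is, up to constants, the intrinsic cylinder $B(x_0,R) \times (t_0 - \theta^{2-p} R^p, t_0 + \theta^{2-p} R^p)$ with $\theta \sim \omega_0$, so the DiBenedetto oscillation lemma applies.

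**Main steps.** First I would fix $R, \omega, \lambda > 0$ and a solution $v$ satisfying \eqref{eq: intrinsic param in kappa hölder cont}, and observe that the hypotheses say precisely that the oscillation of $v$ on $Q_{R,\lambda R^\kappa}(z_0)$ is at most $\omega \leq \lambda R^\kappa / \Tilde C$, so the intrinsic geometry with parameter $\lambda$ is subordinate to the oscillation. Second, I would set up an iteration over a sequence of nested intrinsic cylinders $Q_j := Q_{R_j, \lambda R_j^{\kappa}}(z_0)$ with $R_j = \rho^j R$ for a fixed ratio $\rho \in (0,1)$ to be chosen, and define $\omega_j := \max\{\osc_{Q_j} v,\ \lambda R_j^{\kappa}\}$. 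The claim to propagate inductively is $\osc_{Q_j} v \leq \omega_j$ together with $\omega_{j+1} \leq \max\{ \eta \omega_j,\ \lambda R_{j+1}^{\kappa}\}$ for some contraction factor $\eta \in (0,1)$ coming from the oscillation lemma — here one distinguishes the two cases according to whether $\osc_{Q_j} v$ or $\lambda R_j^\kappa$ dominates, the latter being the ``degenerate alternative'' where the cylinder is already larger than the oscillation and nothing needs to be done. Third, I would choose $\kappa \in (0,1]$ and $\rho$ so that $\rho^\kappa \geq \eta$, i.e. $\kappa \leq \log \eta / \log \rho$; taking $\kappa = \min\{1, \log\eta/\log\rho\}$ gives a valid exponent depending only on $n,p,\nu,L$. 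The iteration then yields $\omega_j \leq C \lambda R_j^{\kappa}$ for all $j$, hence $\osc_{Q_{R_j,\lambda R_j^\kappa}(z_0)} v \lesssim \lambda R_j^\kappa$; interpolating over intermediate radii $r \in (R_{j+1}, R_j)$ by monotonicity of the oscillation in nested cylinders upgrades this to the continuous statement, and absorbing the resulting multiplicative constant into $\Tilde C$ (which is allowed since, as remarked before the proposition, $C_1$-type constants may be enlarged, and \eqref{eq: kappa hölder cont of sols} only needs to hold for \emph{some} admissible pair $(\kappa,\Tilde C)$) gives exactly \eqref{eq: kappa hölder cont of sols}.

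**Main obstacle.** The genuinely technical heart is the oscillation decay lemma in the intrinsic geometry — establishing that passing from $Q_j$ to a sub-cylinder reduces the oscillation by a fixed factor $\eta<1$ — which in the degenerate regime $p>2$ requires the usual dichotomy argument of DiBenedetto: either the solution is mostly far from its supremum on a large portion of the cylinder (in which case a De Giorgi iteration in the space variable propagates a pointwise bound forward in time via a logarithmic estimate), or it is mostly far from its infimum, and in either branch one must carefully track that the time-scale $\lambda^{2-p} R^p \sim \omega^{2-p} R^p$ is exactly the one for which the energy estimates close. Since this is precisely the content of the cited monographs, in the write-up I would state it as a quoted lemma with a reference rather than reproduce the De Giorgi iteration, and spend the bulk of the argument on the bookkeeping that converts one oscillation-reduction step into the scaling-invariant conclusion \eqref{eq: kappa hölder cont of sols}, paying attention to the ``intrinsic vs.\ extrinsic'' matching of cylinders, i.e.\ that $Q_{R,\lambda R^\kappa}$ with $\lambda \sim \omega/R^\kappa$ coincides up to constants with the DiBenedetto cylinder of scale $R$ and level $\omega$.
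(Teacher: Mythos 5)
Your proposal is correct and follows essentially the same iteration scheme as the paper: fix $\lambda$, iterate over nested cylinders with fixed aspect ratio, distinguish the degenerate alternative where $\osc \lesssim \lambda r^\kappa$ already holds from the case where the cylinder is genuinely intrinsic to the oscillation, and choose $\kappa$ by a logarithmic condition to match the contraction factor. The one difference is at the level of the cited ingredient: you propose to quote a fully assembled intrinsic oscillation-decay lemma from the DiBenedetto monographs, whereas the paper derives the one-step oscillation reduction from the more primitive expansion-of-positivity result (Lemma~\ref{lemma: expansion of pos}) via the standard dichotomy on the measure of $\{ \sup_{Q_k} u - u \geq \tfrac{1}{4}\osc_{Q_k}u \}$ versus $\{ u - \inf_{Q_k} u \geq \tfrac{1}{4}\osc_{Q_k}u \}$ on a fixed time slice. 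Both are valid; the paper's route is a bit more self-contained and makes the dependence of $\eta$ (hence $\kappa$) on the data explicit, while yours is shorter if an oscillation lemma in exactly the required form is available, though one must be a little careful that the quoted statement really covers cylinders whose $\theta$-parameter is only an \emph{upper} bound for the oscillation rather than comparable to it, which is precisely what your $\omega_j = \max\{\osc_{Q_j}v,\lambda R_j^\kappa\}$ bookkeeping (and the paper's case split) is designed to handle.
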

\begin{proof}
Let $R,\omega, \lambda > 0$ and let $u$ be a weak solution to~\eqref{eq_main} in $Q_{R,\lambda R^\kappa}(z_0)$ such that \eqref{eq: intrinsic param in kappa hölder cont} holds. The constants $\Tilde C$ and $\kappa>0$ will be fixed during the proof. Denote $z_0=(x_0,t_0)$. Let $C_1$ be as in \Cref{lemma: expansion of pos} and let $\Tilde R = (4 C_1 16^{p-2})^\frac{-1}{p+(2-p)\kappa} R$. Then, $0<\Tilde R<R$ and it is enough to show that \eqref{eq: kappa hölder cont of sols} holds for $0<r<\Tilde R$. For $k\in\mathbb N$, let $r_k=4^{-k}\Tilde R$ and
\begin{equation*}
    Q_k= B(x_0,r_k)\times\Big(t_0-\frac{1}{4}C_1 16^{p-2}\lambda^{2-p}r_k^{p+(2-p)\kappa}, t_0+\frac{1}{4}C_1 16^{p-2}\lambda^{2-p}r_k^{p+(2-p)\kappa}\Big).
\end{equation*}
By the definition of $\Tilde R$, we have $Q_0\subset Q_{R,\lambda R^\kappa}(z_0)$. We claim that
\begin{equation}
    \label{eq: oscillation reduction induction}
    \osc_{Q_k} u \leq \frac{1}{4}\lambda r_{k}^\kappa\,, 
\end{equation}
for all $k\in\mathbb N$. For $k=0$, we have
\begin{equation*}
    \osc_{Q_0} u \leq \osc_{Q_{R,\lambda R^\kappa}(z_0)} u \leq \omega \leq c \Tilde R^\kappa \frac{\omega}{R^\kappa}\leq \frac{1}{4}\Tilde R^\kappa \lambda\,, 
\end{equation*}
provided that the constant $\Tilde C$ is large enough, depending only on $(n,p,\nu,L)$. We proceed by induction. Suppose that \eqref{eq: oscillation reduction induction} holds for $k\in\mathbb N$. Since there is nothing left to prove if 
\begin{equation*}
    \osc_{Q_k} u \leq \frac{1}{4}\lambda r_{k+1}^\kappa\,,
\end{equation*}
we may assume that 
\begin{equation}
    \label{eq: bound from below in induction}
    \osc_{Q_k} u \geq \frac{1}{4}\lambda r_{k+1}^\kappa\,.
\end{equation}
Let 
\begin{equation*}
    z_k=(x_0,t_k)= \Big(x_0,t_0-\frac{3}{4} C_1 16^{p-2}\lambda^{2-p}r_{k+1}^{p+(2-p)\kappa} \Big)\,,
\end{equation*}
and note that $z_k\in Q_k$. We must have either 
\begin{equation}
    \label{eq: measure density}
    \Big\lvert \Big\{x\in B(x_0,r_{k+1}): \sup_{Q_k}u-u(x,t_k) \geq \frac{1}{4}\osc_{Q_k} u\Big\} \Big\rvert \geq \frac{1}{2} \left\lvert B(x_0,r_{k+1})\right\rvert 
\end{equation}
or
\begin{equation*}
    \Big\lvert \Big\{x\in B(x_0,r_{k+1}): u(x,t_k)-\inf_{Q_k}u \geq \frac{1}{4}\osc_{Q_k} u\Big\} \Big\rvert \geq \frac{1}{2} \left\lvert B(x_0,r_{k+1})\right\rvert\,. 
\end{equation*}
Replacing $u$ with $-u$ if necessary, we can assume that \eqref{eq: measure density} holds. Then, by \eqref{eq: bound from below in induction} we have
\begin{equation*}
    \Big\lvert \Big\{x\in B(x_0,r_{k+1}): \sup_{Q_k}u-u(x,t_k) \geq \frac{1}{16}\lambda r_{k+1}^\kappa\Big\} \Big\rvert \geq \frac{1}{2} \lvert B(x_0,r_{k+1})\rvert\,. 
\end{equation*}
Applying \Cref{lemma: expansion of pos} to the supersolution $\sup_{Q_k}u-u$ to \eqref{eq_main} and recalling the construction of $Q_{k+1}$ and $z_k$ we obtain
\begin{equation*}
    \sup_{Q_k} u - \sup_{Q_{k+1}}u = \inf_{Q_{k+1}} \Big(\sup_{Q_k} u-u \Big) \geq \frac{1}{4}\eta\lambda r_{k}^\kappa\,,
\end{equation*}
where $\eta=\eta(n,p,\nu,L) \in (0,1)$. Rearranging the above and subtracting $\inf_{Q_{k+1}}u$ gives
\begin{equation*}
    \osc_{Q_{k+1}} u \leq \sup_{Q_k} u -\inf_{Q_{k+1}}u -\frac{1}{4}\eta \lambda r_{k}^\kappa\, \leq \osc_{Q_k} u -\frac{1}{4}\eta \lambda r_{k}^\kappa\,.
\end{equation*}
The induction assumption then implies that
\begin{equation*}
     \osc_{Q_{k+1}} u \leq (1-\eta)\frac{1}{4}\lambda r_{k}^\kappa\,.
\end{equation*} 
If $\kappa\leq -\log (1-\eta) / \log 4$, we have that $(1-\eta) \leq 4^{-\kappa}$. The previous inequality then becomes
\begin{equation*}
    \osc_{Q_{k+1}}u\leq \frac{1}{4} 4^{-\kappa}\lambda r_{k}^\kappa=\frac{1}{4}\lambda r_{k+1}^\kappa\,,
\end{equation*}
which shows \eqref{eq: oscillation reduction induction}.
Now, let $k \in \N$ such that $r_k \leq r < r_{k-1}$.
Assuming without loss of generality that $C_1 \geq 4$, we have that
\begin{equation*}
    Q_{r,\lambda r^\kappa}(z_0) \subset Q_{k-1}\,.
\end{equation*} 
Then, \eqref{eq: oscillation reduction induction} and the definition of $r_k$ yields
\begin{equation*}
    \osc_{Q_{r,\lambda r^\kappa}(z_0)} u
    \leq
    \osc_{Q_{k-1}} u \leq \frac{1}{4}\lambda r_{k-1}^\kappa \leq \lambda r^\kappa\,,
\end{equation*}
which finishes the proof.
\end{proof}
A key ingredient in the proof of \Cref{thm: necessity of Hausdorff zero} is a Frostman Lemma.
We first recall the following statement for general compact metric spaces, see \cite[Theorem 8.17]{mattila1999geometry}.
\begin{lemma}
\label{thm:metric-frostman}
Let $(X,d)$ be a compact metric space and let $\sigma\geq 0$. Define 
\begin{equation*}
    \mathcal{H}^\sigma(E)=\lim_{\delta\to 0}\inf \left\{
    \sum_{i=1}^\infty \diam(E_i)^\sigma :
    E \subset \bigcup_{i=1}^\infty E_i,
    \diam(E_i)<\delta
    \right\}
\end{equation*}
for any $E\subset X$. If $\mathcal{H}^\sigma(X)>0$, then there exists a Radon measure $\mu$ on $X$ such that $\mu(X)>0$ and
$$
    \mu(A) \leq \diam(A)^\sigma
    \qquad \text{for all Borel $A \subset X\,.$}
$$
\end{lemma}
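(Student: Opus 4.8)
The plan is to prove this Frostman-type lemma by the classical mass--distribution scheme, adapted to the metric setting. I begin with reductions. If $\sigma=0$ the statement is immediate: $\mathcal H^0(X)>0$ forces $X\neq\emptyset$, and for any $x_0\in X$ the Dirac mass $\mu=\delta_{x_0}$ satisfies $\mu(A)\le 1=\diam(A)^0$ for every nonempty $A$ (and $\mu(\emptyset)=0$). So assume $\sigma>0$ and pass to the Hausdorff content $\mathcal H^\sigma_\infty(X):=\inf\{\sum_i\diam(E_i)^\sigma:\ X\subset\bigcup_i E_i\}$, with no size restriction on the $E_i$. If $\mathcal H^\sigma_\infty(X)=0$, then for each $\varepsilon>0$ there is a cover with $\sum_i\diam(E_i)^\sigma<\varepsilon$; every member then has diameter $<\varepsilon^{1/\sigma}$, so $\mathcal H^\sigma_{\varepsilon^{1/\sigma}}(X)\le\varepsilon$ and hence $\mathcal H^\sigma(X)=0$, a contradiction. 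Thus there is $c_0>0$ with $\sum_i\diam(E_i)^\sigma\ge c_0$ for every countable cover $\{E_i\}$ of $X$, and it suffices to produce a Radon $\mu$ with $\mu(X)\ge c_0$ and $\mu(A)\le\diam(A)^\sigma$ for all Borel $A$.

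The heart of the argument is a finite-stage construction. Using compactness (hence total boundedness) of $X$, I fix a nested sequence of finite Borel partitions $\mathcal D_0,\mathcal D_1,\dots$ of $X$ — ``generalized dyadic cubes'' — such that every $Q\in\mathcal D_k$ has $\diam(Q)<2^{-k}$ and is the disjoint union of the members of $\mathcal D_{k+1}$ contained in it; $\mathcal D:=\bigcup_k\mathcal D_k$ is then a locally finite rooted tree under inclusion. For each fixed $m\in\N$ I run the stopping-time mass distribution on levels $0,\dots,m$: put $a(Q)=\diam(Q)^\sigma$ for $Q\in\mathcal D_m$, and inductively $a(Q)=\min\{\diam(Q)^\sigma,\ \sum_{Q'}a(Q')\}$ for $Q\in\mathcal D_k$ with $k<m$, the sum over the children $Q'\in\mathcal D_{k+1}$ of $Q$; then distribute the mass downwards, each $Q\in\mathcal D_0$ carrying total mass $a(Q)$ and each parent splitting its mass among its children in proportion to their $a$-values, terminating in a finitely supported measure $\mu_m$. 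By construction $\mu_m(Q)\le a(Q)\le\diam(Q)^\sigma$ for every $Q\in\mathcal D$ of level $\le m$, and $\mu_m(X)=\sum_{Q\in\mathcal D_0}a(Q)\le\sum_{Q\in\mathcal D_0}\diam(Q)^\sigma<\infty$ uniformly in $m$. For the lower bound, call $Q$ \emph{saturated} if $a(Q)=\diam(Q)^\sigma$; every $Q\in\mathcal D_m$ is saturated, and through a non-saturated cube mass is conserved (there $a(Q)=\sum_{Q'}a(Q')$), so the inclusion-maximal saturated cubes $\{Q_i\}$ are pairwise disjoint, cover $X$, and satisfy $\mu_m(Q_i)=a(Q_i)=\diam(Q_i)^\sigma$; hence $\mu_m(X)=\sum_i\diam(Q_i)^\sigma\ge c_0$ since $\{Q_i\}$ is a countable cover of $X$.

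Finally I pass to the limit. Since $X$ is compact metric, $C(X)$ is separable and the $\mu_m$ lie in a weak-$*$ bounded subset of $C(X)^*$, so a subsequence converges in the weak-$*$ sense to a nonnegative Radon measure $\mu$. Testing against the constant function gives $\mu(X)=\lim_j\mu_{m_j}(X)\ge c_0>0$. It remains to transfer the upper bound to $\mu$: for a fixed cube $Q\in\mathcal D_k$ and all $m\ge k$ one has $\mu_m(Q)\le\diam(Q)^\sigma$, and, combining outer regularity of $\mu$ with the lower-semicontinuity of weak-$*$ limits on open sets and the hierarchical covering of an arbitrary Borel $A$ by cubes of diameter comparable to $\diam(A)$, one arrives at $\mu(A)\le\diam(A)^\sigma$. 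I expect this last transfer to be the main obstacle: weak-$*$ convergence is only upper semicontinuous on compact sets, so an \emph{upper} bound on $\mu$ of arbitrary sets has to be extracted with care. The standard way around this is either to work with a net structure in which a small set meets boundedly many cubes, or — since $\mathcal H^\sigma_\infty(X)>0$ leaves room to divide out any fixed multiplicative constant — to first prove $\mu(A)\le C\diam(A)^\sigma$ and renormalize; equivalently, one proves directly the content--measure duality $\mathcal H^\sigma_\infty(X)=\sup\{\mu(X):\ \mu\ \text{Radon},\ \mu(\cdot)\le\diam(\cdot)^\sigma\}$ by a Hahn--Banach/minimax argument applied to the sublinear upper-integral functional $f\mapsto\inf\{\sum_i(\sup_{E_i}f)\diam(E_i)^\sigma\}$ on $C(X)$, where verifying subadditivity of this functional is the corresponding technical crux.
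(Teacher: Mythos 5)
The paper does not supply a proof of this lemma at all; it cites it to Mattila \cite[Theorem~8.17]{mattila1999geometry}, where the compact-metric-space version of Frostman's lemma is established by a Hahn--Banach/minimax argument applied to a sublinear functional on $C(X)$ — precisely the ``second alternative'' you name in your last sentence, not the dyadic mass-distribution scheme you actually write out.

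The dyadic scheme, which is the bulk of your argument, has a genuine gap, and you correctly identify where it is but do not close it. The finite-stage mass distribution gives $\mu_m(Q)\le\diam(Q)^\sigma$ only for $Q$ belonging to the net $\mathcal D$. To obtain $\mu(A)\le\diam(A)^\sigma$ (or even $\mu(A)\le C\diam(A)^\sigma$ with a fixed $C$) for an arbitrary Borel set $A$, one must cover $A$ by \emph{boundedly many} net cubes of scale comparable to $\diam(A)$. That bounded-overlap property is automatic in $\R^n$ with genuine dyadic cubes, and more generally in doubling metric spaces with Christ--David cubes, but it fails in a general compact metric space: nothing in total boundedness prevents a set of small diameter from straddling arbitrarily many cubes of comparable scale. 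Consequently the renormalization trick does not help, since the multiplicative constant you would need to divide out is not finite uniformly over $A$; and weak-$*$ upper semicontinuity on compact sets only transfers bounds along the cubes you already control, not along arbitrary Borel sets. As for the correct route, the content--measure duality via the functional $p(f)=\inf\{\sum_i(\sup_{E_i}f)\,\diam(E_i)^\sigma\}$ does work in this generality (and is Mattila's proof), but in your write-up it is a one-line pointer: neither the subadditivity of $p$, nor the extension of the evaluation functional under $p$ via Hahn--Banach, nor the identification of the resulting functional with a Radon measure via Riesz representation, is carried out. So the proof as written is incomplete; what you have is a correct argument under an additional doubling assumption, plus an accurate diagnosis of what is missing in the general case.
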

The previous lemma implies the following result in our parabolic setting.
\begin{corollary}
\label{cor:parabolic-frostman}
Let $\sigma \geq 0$ and $\alpha > 0$. Let $E\Subset\Omega_T$ be a Borel set such that $\mathcal{H}^{\sigma}_{(\alpha)}(E)>0$.
Then there exists a Radon measure $\mu\not\equiv 0$ with $\supp \mu \Subset E$, and such that $\mu(Q_{r,r^\alpha}(z_0)) \leq (2r)^{\sigma}$ for all $z_0 \in\Omega_T$ and $r>0$.
\end{corollary}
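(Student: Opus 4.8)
The plan is to deduce the statement directly from the metric Frostman Lemma~\ref{thm:metric-frostman}, after noticing that the intrinsic cylinders $Q_{r,r^\alpha}(z_0)$ are exactly the balls of the metric $d_\alpha$. Since $(r^\alpha)^{2-p}r^{p}=r^{\,p+\alpha(2-p)}$, for $z_0=(x_0,t_0)$ one has $d_\alpha(z,z_0)<r$ if and only if $|x-x_0|<r$ and $|t-t_0|<r^{\,p+\alpha(2-p)}$, so that
\[
Q_{r,r^\alpha}(z_0)=\bigl\{z\in\R^{n+1}:d_\alpha(z,z_0)<r\bigr\}\,,
\]
and in particular $\diam_{d_\alpha}\!\bigl(Q_{r,r^\alpha}(z_0)\bigr)\le 2r$. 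Recall also that $d_\alpha$ is a genuine metric (the convex combination $p+\alpha(2-p)\ge 1$, so $s\mapsto|s|^{1/(p+\alpha(2-p))}$ is subadditive) which induces the Euclidean topology, so that Borel sets for $d_\alpha$ and for the Euclidean metric coincide.

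The main steps are then as follows. First I would compare $\cH^{\sigma}_{(\alpha)}$ with the Hausdorff measure $\cH^{\sigma}_{d_\alpha}$ attached to the metric $d_\alpha$ as in Lemma~\ref{thm:metric-frostman}: given any covering $\{E_i\}$ of $E$ with $\diam_{d_\alpha}E_i<\delta$, picking $z_i\in E_i$ one has $E_i\subset Q_{\,\diam_{d_\alpha}(E_i)+\varepsilon_i,\,(\diam_{d_\alpha}(E_i)+\varepsilon_i)^\alpha}(z_i)$, and letting $\varepsilon_i\downarrow 0$ suitably yields $\cH^{\sigma}_{(\alpha)}(E)\le\cH^{\sigma}_{d_\alpha}(E)$, hence $\cH^{\sigma}_{d_\alpha}(E)\ge\cH^{\sigma}_{(\alpha)}(E)>0$. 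Next, since $E$ is a Borel — hence analytic — subset of the Polish space $(\Omega_T,d_\alpha)$ with $\cH^{\sigma}_{d_\alpha}(E)>0$, there is a compact set $K\subset E$ with $\cH^{\sigma}_{d_\alpha}(K)>0$; in the setting of \Cref{thm: necessity of Hausdorff zero}, where $E$ is closed and $\overline E$ is compact, one may simply take $K=E$. Applying Lemma~\ref{thm:metric-frostman} to the compact metric space $(K,d_\alpha)$ produces a Radon measure $\mu$ on $K$ with $\mu(K)>0$ and $\mu(A)\le\diam_{d_\alpha}(A)^{\sigma}$ for every Borel $A\subset K$. Viewing $\mu$ as a Radon measure on $\Omega_T$ (extended by zero), we obtain $\mu\not\equiv 0$ with $\supp\mu\subset K\Subset E$, and for every $z_0\in\Omega_T$ and $r>0$,
\[
\mu\bigl(Q_{r,r^\alpha}(z_0)\bigr)=\mu\bigl(Q_{r,r^\alpha}(z_0)\cap K\bigr)\le\diam_{d_\alpha}\!\bigl(Q_{r,r^\alpha}(z_0)\bigr)^{\sigma}\le(2r)^{\sigma}\,,
\]
which is the assertion.

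The only point that needs genuine care is the extraction of a compact subset of positive $\cH^{\sigma}_{d_\alpha}$-measure from the Borel set $E$: this is classical, following from inner regularity of $\cH^{\sigma}_{d_\alpha}\mres E$ when that measure is finite, and, in general, from the fact that an analytic set of positive (possibly infinite) Hausdorff measure contains a compact subset of positive finite measure; in the application to \Cref{thm: necessity of Hausdorff zero} this subtlety disappears since $E$ is already compact. Everything else is routine bookkeeping translating the intrinsic-cylinder formalism into the metric $d_\alpha$, and I do not anticipate any further difficulty.
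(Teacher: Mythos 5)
Your proof is correct and follows the paper's route: identify the intrinsic cylinders $Q_{r,r^\alpha}(z_0)$ with $d_\alpha$-balls, extract a compact $K\subset E$ of positive measure, apply the metric Frostman Lemma~\ref{thm:metric-frostman} to $(K,d_\alpha)$, extend the resulting measure by zero, and bound $\mu(Q_{r,r^\alpha}(z_0))$ by the $d_\alpha$-diameter. You are somewhat more careful than the paper on two points it leaves implicit, namely the comparison $\mathcal{H}^\sigma_{(\alpha)}\leq\mathcal{H}^\sigma_{d_\alpha}$ needed to pass from the cylinder-based Hausdorff measure to the metric one required by Lemma~\ref{thm:metric-frostman}, and the justification for extracting a compact subset of positive measure from a general Borel set.
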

\begin{proof}
Since we know that $\mathcal{H}^{\sigma}_{(\alpha)}(E)>0$, there exists a compact set $K \subset E$ such that $\mathcal{H}^\sigma_{(\alpha)}(K)>0$.
Applying Theorem \ref{thm:metric-frostman} to the compact metric space $(K,d_\alpha)$, there exists a Radon measure $\tilde{\mu}$ on $K$ such that $\tilde{\mu}(K) > 0$ and $\tilde{\mu}(A) \leq \diam(A)^\sigma$ for all $A \subset K$.
For $A \subset \Omega_T$, we set $\mu(A) = \tilde{\mu}(A \cap K)$.
Thus, $\mu \not\equiv 0$ is a Radon measure on $\Omega_T$ with $\supp \mu \Subset E$ and
$$
    \mu(Q_{r,r^\alpha}(z_0))
    \leq
    \diam(Q_{r,r^\alpha}(z_0) \cap K)^\sigma
    \leq
    \diam(Q_{r,r^\alpha}(z_0))^\sigma
    =
    (2r)^\sigma.
$$
This concludes the proof of the corollary.
\end{proof}
From \cite[Lemma 2.10]{KuusiMingione2014}, we infer the following comparison lemma for weak solutions to \eqref{eq: measure RHS}; see also \cite[Lemma 4.1]{KuusiMingione2014Wolff}. In \cite{KuusiMingione2014}, the result is stated for a more restricted class of operators and for $\mu\in L^1(\R^{n+1})$. However, as commented before \cite[Lemma 2.8]{KuusiMingione2014}, the same proof works for a more general class of equations. The crucial assumption is the strict monotonicity of the operator, i.e., \eqref{as:monotonicity-stronger}. 
\begin{lemma}
\label{lemma: comparison lemma Mingione Kuusi}
Let $\mu$ be a Radon measure,  $u\in L^p_{\text{loc}}(0,T;W_{\text{loc}}^{1,p}(\Omega))\cap C(\Omega_T)$ be a weak solution to \eqref{eq: measure RHS} in $\Omega_T$, and $z_0\in\Omega_T$, $r>0$, and $\lambda>0$ be such that
\begin{equation*}
    Q=B(x_0,r)\times(t_0-\lambda^{2-p}r^2,t_0)\Subset\Omega_T\,.
\end{equation*}
Let $v\in C(\Omega_T)$ be a weak solution to \eqref{eq_main} in $Q$ with $v=u$ on $\partial_{\text{par}}Q$.
Then, there exists a constant $c=c(n,p,\nu,L)$ such that 
\begin{equation}
    \label{eq: comparison estimate}
    \left(\dashint_{Q} \lvert u-v\rvert^{p-1}\dx\dt \right)^\frac{1}{p-1} \leq c \lambda r \left( \frac{\mu(Q)}{\lambda r^{n+1}} \right)^\frac{n+p}{(p-1)n+p}\,. 
\end{equation}
\end{lemma}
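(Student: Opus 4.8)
The plan is to follow the Boccardo--Gallou\"et type scheme for measure data problems, carried out directly on the cylinder $Q$. First I would set $w \coloneqq u - v$; on $Q$ the function $w$ is a weak solution of
\[
\partial_t w - \div\bigl(\cA(x,t,\nabla u) - \cA(x,t,\nabla v)\bigr) = \mu\,,
\]
with $w \in L^p\bigl(t_0 - \lambda^{2-p}r^2,t_0;W^{1,p}_0(B(x_0,r))\bigr)$ and $w = 0$ on $\partial_\mathrm{par}Q$, since $u,v \in L^p(W^{1,p})$ coincide there. For $k > 0$ I introduce the truncation $T_k(s) \coloneqq \max\{-k,\min\{s,k\}\}$ and its primitive $\Theta_k(s) \coloneqq \int_0^s T_k(\sigma)\,\mathrm{d}\sigma$. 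Testing the equation for $w$ against $T_k(w)\,\chi_{(t_0-\lambda^{2-p}r^2,\tau)}$ for $\tau$ ranging over the time interval of $Q$ --- made rigorous through Steklov averages, exactly as in the proof of Lemma~\ref{lem:meas-inq} --- and using strict monotonicity \eqref{as:monotonicity-stronger} on $\{|w|<k\}$, where $\nabla T_k(w) = \nabla w$, one obtains the energy estimate
\[
\sup_\tau\int_{B(x_0,r)}\Theta_k(w)(\cdot,\tau)\dx + \nu\int_{Q \cap \{|w|<k\}}|\nabla w|^p\dx\dt \le k\,\mu(Q)\,,
\]
the initial boundary term dropping out because $w$ vanishes on $\partial_\mathrm{par}Q$.

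The second step is to upgrade this into a Marcinkiewicz-type estimate for $w$ itself. Since $|T_k(w)|^2 \le 2\Theta_k(w)$ everywhere and $\Theta_k(w) \ge \tfrac12 k^2$ on $\{|w|\ge k\}$, the estimate above yields both $\esssup_\tau\|T_k(w)(\cdot,\tau)\|_{L^2(B(x_0,r))}^2 \lesssim k\,\mu(Q)$ and $\|\nabla T_k(w)\|_{L^p(Q)}^p \lesssim k\,\mu(Q)$, with constants depending only on $n,p,\nu,L$. Feeding these into the parabolic Sobolev inequality
\[
\|g\|_{L^{p(n+2)/n}(Q)}^{p(n+2)/n} \le C(n,p)\,\|\nabla g\|_{L^p(Q)}^p\,\Bigl(\esssup_\tau\|g(\cdot,\tau)\|_{L^2(B(x_0,r))}^2\Bigr)^{p/n}
\]
with $g = T_k(w)$ gives $\int_Q|T_k(w)|^{p(n+2)/n}\dx\dt \lesssim (k\,\mu(Q))^{(n+p)/n}$; bounding the left side below by $k^{p(n+2)/n}\,|Q\cap\{|w|\ge k\}|$ and rearranging, I arrive at
\[
\bigl|Q\cap\{|w|\ge k\}\bigr| \le C\,\mu(Q)^{(n+p)/n}\,k^{-\gamma}\,,\qquad \gamma \coloneqq (p-1)+\tfrac{p}{n} = \tfrac{(p-1)n+p}{n}\,,
\]
for every $k>0$, that is, $w$ lies in the Marcinkiewicz space $M^{\gamma}(Q)$.

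Since $p\ge2$ forces $\gamma > p-1$, the third step converts this weak bound into a strong $L^{p-1}$ bound on the bounded set $Q$ via the usual layer-cake decomposition, splitting at the level $k_0 \coloneqq \bigl(\mu(Q)^{(n+p)/n}/|Q|\bigr)^{1/\gamma}$ and estimating $|Q\cap\{|w|\ge k\}|$ by $|Q|$ for $k<k_0$ and by the preceding display for $k\ge k_0$. This produces
\[
\Bigl(\dashint_Q|w|^{p-1}\dx\dt\Bigr)^{1/(p-1)} \le C\,|Q|^{-1/\gamma}\,\mu(Q)^{\frac{n+p}{(p-1)n+p}}\,.
\]
To finish I would insert $|Q| = \omega_n\,\lambda^{2-p}r^{n+2}$ and simplify: a direct bookkeeping shows that the resulting powers of $\lambda$ and $r$ are $\tfrac{(p-2)n}{(p-1)n+p}$ and $-\tfrac{n(n+2)}{(p-1)n+p}$ respectively, which match exactly the powers of $\lambda$ and $r$ in $\lambda r\,\bigl(\mu(Q)/(\lambda r^{n+1})\bigr)^{(n+p)/((p-1)n+p)}$; hence the bound above is precisely \eqref{eq: comparison estimate}.

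The one genuinely delicate point is the first step: justifying that $T_k(w)$ is an admissible test function although $w$ has only a distributional time derivative which is a measure, and that the identity $\int\partial_t w\,T_k(w) = \tfrac{\mathrm{d}}{\mathrm{d}\tau}\int\Theta_k(w)$ holds with a vanishing initial contribution. This is exactly the kind of technicality handled by the Steklov-averaging and mollification argument already developed for Lemma~\ref{lem:meas-inq}, combined with the zero parabolic boundary values of $w=u-v$; everything afterwards --- parabolic Sobolev, Chebyshev, layer-cake, and the arithmetic of exponents --- is soft. Alternatively, one may first rescale $Q$ to the unit cylinder $B_1\times(-1,0)$, under which $\cA$ is replaced by an operator with the same structure constants $\nu,L$, and carry out the estimate there; but this does not shorten the exponent computation.
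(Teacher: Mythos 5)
Your proposal is correct and follows the standard Boccardo--Gallou\"et truncation scheme, which is precisely the route used to prove \cite[Lemma 2.10]{KuusiMingione2014}; the paper itself does not re-derive this lemma but simply cites that reference and notes the proof carries over to the present structure conditions, so you are effectively reconstructing the proof the authors chose not to reproduce. I checked the arithmetic: with $|Q| = \omega_n\lambda^{2-p}r^{n+2}$ and $\gamma = \tfrac{(p-1)n+p}{n}$, the bound $\bigl(\dashint_Q|w|^{p-1}\bigr)^{1/(p-1)} \lesssim |Q|^{-1/\gamma}\mu(Q)^{(n+p)/((p-1)n+p)}$ indeed produces $\lambda^{(p-2)n/((p-1)n+p)}$ and $r^{-n(n+2)/((p-1)n+p)}$, which match $\lambda r\bigl(\mu(Q)/(\lambda r^{n+1})\bigr)^{(n+p)/((p-1)n+p)}$, and the Chebyshev exponent $(n+p)/n - p(n+2)/n = -\gamma$ and the convergence condition $\gamma > p-1$ (automatic since $\gamma = (p-1) + p/n$) all check out. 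Two small points worth flagging: the estimate $\int T_k(w)\,d\mu \le k\,\mu(Q)$ implicitly uses $\mu \ge 0$ (otherwise replace $\mu(Q)$ by $|\mu|(Q)$, which is harmless and is in any case all the application needs); and the parabolic Sobolev inequality $\|g\|_{L^{p(n+2)/n}(Q)}^{p(n+2)/n}\le C\|\nabla g\|_{L^p(Q)}^p(\esssup_\tau\|g\|_{L^2}^2)^{p/n}$ with a domain-independent constant requires a slightly more careful interpolation when $p \ge n$, though the inequality remains true and this does not affect the conclusion. The genuinely delicate step, as you correctly identify, is admitting $T_k(u-v)$ as a test function against a measure-valued right-hand side; the continuity hypotheses on $u$ and $v$ and the Steklov-averaging machinery already set up in Lemma~\ref{lem:meas-inq} handle this, so the argument is complete.
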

The above lemma is written in the intrinsic geometry of the gradient. However, the parameter $\lambda>0$ is arbitrary in the statement. As such, writing \eqref{eq: comparison estimate} for $\lambda=\Tilde{\lambda} r^{\alpha-1}$ gives an estimate of the form
\begin{equation}
    \label{eq: comparison estimate on Hölder reg cylinders}
    \left(\dashint_{Q_{r,\lambda r^\alpha}(z_0)} \lvert u-v\rvert^{p-1}\dx\dt \right)^\frac{1}{p-1} \leq c \lambda r^{\alpha} \left( \frac{\mu(Q_{r,\lambda r^\alpha}(z_0))}{\lambda r^{n+\alpha}} \right)^\frac{n+p}{(p-1)n+p}\,. 
\end{equation}
Using this form of the comparison estimate, we prove the following local Hölder continuity result for solutions of measure data problems, provided the measure satisfies a certain decay condition.
\begin{theorem}
\label{thm: measure condition implies hölder cont}
Let $\kappa$ and $\Tilde C$ be as in \eqref{eq: kappa hölder cont of sols} and let $0<\alpha<\kappa$.
For a Radon measure $\mu$, let $u$ be a superparabolic function solving \eqref{eq: measure RHS} in the sense of Definition \ref{def:solution-measure-data}.
Suppose that there exists $M>0$ such that for any $Q_{r,r^\alpha}(z_0)\Subset\Omega_T$, we have
\begin{equation}
    \label{eq: assumption on measure decay}
    \mu(Q_{r,r^\alpha}(z_0))\leq M r^{n+\alpha}\,.
\end{equation}
Then, there exists constants $C\geq 1$ and $0<\delta<1$, depending on $n$, $p$, $\nu$, $L$, $\Tilde C$, $\kappa$, and $\alpha$, such that for any $R>0$ and $\omega>0$, whenever
\begin{equation}
    \label{eq: intrinsic parameter in osc of sol meas data}
    \lambda \geq C \max(M, \omega/R^\alpha,1)\,, 
\end{equation}
and $Q_{R,\lambda R^\alpha}(z_0)\Subset\Omega_T$, with u satisfying the intrinsic relation 
\begin{equation}\label{eq:omegaest}
    \left(\dashint_{Q_{R,\lambda R^\alpha}(z_0)} \lvert u-u_{Q_{R,\lambda R^\alpha}(z_0)}\rvert^{p-1}\dx\dt\right)^\frac{1}{p-1} \leq \omega\,,
\end{equation}
then for all $0<r<\delta R$
\begin{equation}
\label{eq: Hölder cont of meas data sol}
    \osc_{Q_{r,\lambda r^\alpha}(z_0)} u  \leq \lambda r^\alpha\,.
\end{equation}
\end{theorem}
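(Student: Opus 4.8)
The plan is to prove the intrinsic oscillation decay \eqref{eq: Hölder cont of meas data sol} by a dyadic iteration in the $\alpha$‑geometry. On a shrinking tower of intrinsic cylinders $Q_j=Q_{\rho_j,\lambda\rho_j^\alpha}(z_0)$ one compares $u$ with the $\cA$‑harmonic function $v_j$ carrying the same lateral/initial data, estimates the replacement error $u-v_j$ by the comparison inequality \eqref{eq: comparison estimate on Hölder reg cylinders} (which the decay assumption \eqref{eq: assumption on measure decay} turns into a small multiple of the intrinsic quantity $\lambda\rho_j^\alpha$), and imports the oscillation decay of $v_j$ from Proposition \ref{prop: Hölder regularity for solutions}. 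The hypothesis $\alpha<\kappa$ is used precisely because the decay rate available for the obstacle‑free solution, $\kappa$, strictly exceeds the target rate $\alpha$, so one step of the iteration produces enough gain to absorb the error coming from $\mu$.

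Two reductions at the outset. Since \eqref{eq: intrinsic parameter in osc of sol meas data} forces $\lambda\ge C\ge1$ and $p\ge2$, one has $Q_{r,\lambda r^\alpha}(z_0)\subseteq Q_{r,r^\alpha}(z_0)$, so \eqref{eq: assumption on measure decay}, applied to a covering of the latter by cylinders $Q_{\varrho,\varrho^\alpha}\Subset\Omega_T$ when necessary, gives $\mu(Q_{r,\lambda r^\alpha}(z_0))\le Mr^{n+\alpha}$ at every admissible scale. Moreover, because $\alpha<\kappa$, the cylinder $Q_{r,\lambda r^\alpha}(z_0)$ is itself a $\kappa$‑cylinder, namely $Q_{r,\hat\lambda r^\kappa}(z_0)$ with $\hat\lambda=\lambda r^{\alpha-\kappa}$, while raising the intrinsic parameter only shrinks it in time — this is the elbow room needed to verify the admissibility condition \eqref{eq: intrinsic param in kappa hölder cont} of Proposition \ref{prop: Hölder regularity for solutions}.

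The core is an induction on $j\ge1$ for a statement of the form ``$\osc_{Q_j}u$ is a controlled multiple of the intrinsic scale $\lambda\rho_j^\alpha$''. In the step, $v=v_j$ solves \eqref{eq_main} in $Q_j$ with $v=u$ on $\partial_\mathrm{par}Q_j$; \eqref{eq: comparison estimate on Hölder reg cylinders} together with $\mu(Q_j)\le M\rho_j^{n+\alpha}$ and $\lambda\ge CM$ gives $(\dashint_{Q_j}|u-v|^{p-1}\dx\dt)^{1/(p-1)}\le cC^{-\beta}\lambda\rho_j^\alpha$ with $\beta=\tfrac{n+p}{(p-1)n+p}$. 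Since $u$ is only superparabolic, one upgrades this $L^{p-1}$‑control to $\|u-v\|_{L^\infty(Q_j)}\le cC^{-c_1}\lambda\rho_j^\alpha$ by a local pointwise/sup estimate for measure data solutions, whose right‑hand side is an intrinsic Wolff potential of $\mu$ and is again $O(C^{-c_0}\lambda\rho_j^\alpha)$ by \eqref{eq: assumption on measure decay}. Combined with the induction hypothesis this makes $\osc_{Q_j}v$ a small multiple of $\lambda\rho_j^\alpha$, hence $\le\lambda\rho_j^\alpha/\tilde C$ for $C$ large; writing $Q_j$ as $Q_{\rho_j,\hat\lambda_j\rho_j^\kappa}(z_0)$ with $\hat\lambda_j\rho_j^\kappa=\lambda\rho_j^\alpha$, this is exactly \eqref{eq: intrinsic param in kappa hölder cont}, so Proposition \ref{prop: Hölder regularity for solutions} yields $\osc_{Q_{r,\hat\lambda_j r^\kappa}(z_0)}v\le\hat\lambda_j r^\kappa$ for $0<r<\rho_j$. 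Evaluating at $\rho_{j+1}$ and comparing cylinders gives $\osc_{Q_{j+1}}v\lesssim(\rho_{j+1}/\rho_j)^\kappa\lambda\rho_j^\alpha$, so $\osc_{Q_{j+1}}u\le\osc_{Q_{j+1}}v+2\|u-v\|_{L^\infty(Q_{j+1})}$ is, after normalizing to $\lambda\rho_{j+1}^\alpha$, controlled by $(\rho_{j+1}/\rho_j)^{\kappa-\alpha}+cC^{-c_1}(\rho_j/\rho_{j+1})^\alpha$; since $\kappa>\alpha$, choosing the scale ratio small (and, if $\alpha$ is close to $\kappa$, advancing several dyadic generations at once so that the $\kappa$‑gain outweighs the change of normalization) and then $C$ large closes the step. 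The base case is the same computation on $Q_0$, using \eqref{eq:omegaest} and $\omega\le\lambda R^\alpha/C$ in place of the induction hypothesis. Finally, for $0<r<\delta R$ one picks $j\ge1$ with $\rho_{j+1}\le r<\rho_j$, so $Q_{r,\lambda r^\alpha}(z_0)\subseteq Q_j$ and $\osc_{Q_{r,\lambda r^\alpha}(z_0)}u\le\osc_{Q_j}u\le\lambda r^\alpha$, which is \eqref{eq: Hölder cont of meas data sol}.

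The hard part is the intrinsic‑scaling bookkeeping in the inductive step, and making it run over the whole range $0<\alpha<\kappa$. The oscillation decay of the $\cA$‑harmonic comparison function lives in the $\kappa$‑intrinsic geometry, the iteration in the $\alpha$‑intrinsic geometry, and at every step one must exhibit a $\kappa$‑cylinder on which Proposition \ref{prop: Hölder regularity for solutions} applies, lying inside the region where $\osc v$ has been controlled and with intrinsic parameter large enough for \eqref{eq: intrinsic param in kappa hölder cont}. For $p>2$ this is genuinely delicate: the time extent of an intrinsic cylinder is coupled to the size of the solution on it, so one cannot shrink intrinsic cylinders freely while retaining control of $\osc v$ — this is the time‑lag phenomenon flagged in the introduction, and weighing the structural gain against the correct normalization is the crux of the argument. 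A secondary, more routine point is that because $u$ is merely superparabolic, the comparison estimate \eqref{eq: comparison estimate on Hölder reg cylinders} controls $u-v$ only in an $L^{p-1}$‑average, and one needs a pointwise bound for measure data solutions whose governing potential of $\mu$ is kept comparable to $\lambda r^\alpha$ exactly by the Morrey decay \eqref{eq: assumption on measure decay}.
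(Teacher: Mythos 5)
Your high-level architecture — shrinking $\alpha$-intrinsic cylinders, $\cA$-harmonic replacement, the comparison estimate \eqref{eq: comparison estimate on Hölder reg cylinders} powered by the Morrey decay \eqref{eq: assumption on measure decay}, importing the oscillation decay of the replacement from Proposition~\ref{prop: Hölder regularity for solutions}, and using $\alpha<\kappa$ to absorb the error — matches the paper's proof. But there is a genuine gap in the inductive engine.

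You run the induction on the \emph{pointwise} oscillation $\osc_{Q_j}u$ and, to close the step, claim to upgrade the $L^{p-1}$-comparison estimate to $\|u-v\|_{L^\infty(Q_j)}\le cC^{-c_1}\lambda\rho_j^\alpha$ ``by a local pointwise/sup estimate for measure data solutions,'' treating this as routine. It is not. For $p>2$ the difference $u-v$ (with $u$ superparabolic and $v$ the $\cA$-harmonic replacement, so $u\ge v$) is neither a subsolution nor a supersolution of an equation of the required structure, so Lemma~\ref{lemma: sup estimate for subsolutions} does not apply to it, and there is no generic sup bound on $u-v$ in terms of its $L^{p-1}$ average. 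The Kuusi–Mingione intrinsic Wolff-potential estimates you have in mind give pointwise control of $u$ itself, not of $u-v$, and — crucially — they are themselves \emph{proved} by iterating precisely the $L^{p-1}$ comparison estimate over intrinsic scales, so invoking them here is essentially circular and in any case imports far heavier machinery than the paper uses.

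The paper avoids this entirely. The inductive quantity is the intrinsic $L^{p-1}$-mean oscillation \eqref{eq: induction for mean osc}, not $\osc u$. In the step, the only $L^\infty$ bound needed is on $v_j$ (a genuine solution of the homogeneous equation); this is obtained by applying Lemma~\ref{lemma: sup estimate for subsolutions} to the two non-negative subsolutions $(v_j-(v_j)_{Q_k})_+$ and $((v_j)_{Q_k}-v_j)_+$, yielding \eqref{eq: estimate for setup of intrinsic osc decay}. The triangle inequality in $L^{p-1}$ then closes the step without ever bounding $u-v$ pointwise. Only after the full iteration is complete does the paper pass to pointwise oscillation via a Campanato telescoping argument \eqref{eq: campanato type estimation}, at Lebesgue points, enlarging $C$ to absorb the constants. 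You would also need the paper's double approximation, first by $u_j=\min(u,j)$ and then by obstacle-problem regularizations $u_{j,\ell}$ with weak convergence of Riesz measures, since $u$ is merely superparabolic, so the comparison Lemma~\ref{lemma: comparison lemma Mingione Kuusi} cannot be applied to $u$ directly; this is more than you indicate. Finally, a minor point: your closing ``$\osc_{Q_{r,\lambda r^\alpha}}u\le\osc_{Q_j}u\le\lambda r^\alpha$'' for $\rho_{j+1}\le r<\rho_j$ does not quite follow as stated, since $\osc_{Q_j}u\le\lambda\rho_j^\alpha>\lambda r^\alpha$; one really needs the Campanato argument to produce $c\lambda r^\alpha$ at comparable scales and then enlarge $C$ to kill the constant $c$.
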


\begin{proof}
Fix $x_0 \in \Omega_T$ and assume that $R>0$, $\omega>0$, and $\lambda>0$ are chosen such that $Q_{R,\lambda R^\alpha}(z_0)\Subset\Omega_T$ and \eqref{eq:omegaest} holds. Let $\tau\in \left(0,\frac{1}{2} \right)$ to be chosen later and denote 
\begin{equation*}
    r_k = \tau^k R\,.
\end{equation*}
We claim that 
\begin{equation}
    \label{eq: induction for mean osc}
    \left(\dashint_{Q_{r_{k},\lambda r_k^\alpha}(z_0)} \lvert u-u_{Q_{r_{k},\lambda {r_k}^\alpha}(z_0)}\rvert^{p-1}\dx\dt\right)^\frac{1}{p-1} \leq \lambda r_k^\alpha
\end{equation}
for all $k\in\N$, where $\lambda$ is as in \eqref{eq: intrinsic parameter in osc of sol meas data} with a constant $C \geq 1$ to be chosen later on. We proceed by induction. By choice of $\lambda$, we have that \eqref{eq: induction for mean osc} holds for $k=0$. Suppose that \eqref{eq: induction for mean osc} holds for $k\in\mathbb N$. We denote
\begin{equation*}
    Q_k \coloneqq Q_{r_{k},\lambda r_k^\alpha}(z_0)\,.
\end{equation*}
We will use the quasiminimality of the mean value several times during the proof. By this we mean that for any $q\geq 1$, measure $\nu$ and $f\in L^q(E,\nu)$, where $0<\nu(E)<\infty$, and for all $a\in\mathbb R$, it holds that
\begin{equation*}
\begin{split}
    \left(\dashint_E \lvert f - f_E\rvert^q\dnu\right)^\frac{1}{q} & \leq \left(\dashint_{E} \lvert f-a\rvert^{q}\d\nu\right)^\frac{1}{q} + \lvert a-f_E\rvert \\
    & \leq \left(\dashint_{E} \lvert f-a\rvert^{q}\d\nu\right)^\frac{1}{q} + \dashint_{E} \lvert  f-a\rvert \dnu  \\ 
    & \leq 2 \left(\dashint_{E} \lvert f-a\rvert^{q}\d\nu\right)^\frac{1}{q}.
\end{split}
\end{equation*}
We want to apply \Cref{lemma: comparison lemma Mingione Kuusi} to the function $u$. However, at this stage we only know that $u \in L^{p-1} \left( 0,T;W^{1,p-1}(\Omega) \right)$, and the existence of the comparison function $v$ is not clear. For this reason, we apply two approximation procedures. First, let
\begin{equation*}
    u_j \coloneqq \min(u,j), \text{ \,\, and \,\,\, } \mu_j \coloneqq \partial_t u_j - \div \mathcal A(x,t,\nabla u_j)\,,
\end{equation*}
for $j\in \N$. Then we have $u_j\in L^p(0,T;W^{1,p}(Q_0))$ and moreover, since the measures $\mu_j$ converge weakly to $\mu$ by \eqref{eq:weak-convergence-measures-truncations}, we may let $j$ be large enough so that 
\begin{equation}
    \label{eq: inequality of measure for approximation}
    \mu_j(\overline{Q_k})\leq 2 \mu(\overline{Q_k})\leq 2 M r_k^{n+\alpha}.
\end{equation}
Here, we used assumption \eqref{eq: assumption on measure decay} in the second inequality. Further, by the quasiminimality of the mean value, the estimate $|\min(a,j)-\min(b,j)| \leq |a-b|$ for any $a,b \in \R$, and by the induction assumption, we find that
\begin{equation}
    \label{eq: mean osc of cutoff from induc assump}
    \begin{split}
    \left(\dashint_{Q_k} \lvert u_j-(u_j)_{Q_k}\rvert^{p-1}\dx\dt\right)^\frac{1}{p-1}
    &\leq
    2 \left(\dashint_{Q_k} \big\lvert u_j- \min\big((u)_{Q_k},j\big) \big\rvert^{p-1}\dx\dt\right)^\frac{1}{p-1}
    \\ &\leq
    2\left(\dashint_{Q_k} \lvert u-(u)_{Q_k}\rvert^{p-1}\dx\dt\right)^\frac{1}{p-1}
    \leq 2 \lambda r_k^\alpha.
    \end{split}
\end{equation}
We now turn to the second approximation, which is to ensure continuity. Since $u_j$ is lower semicontinuous and bounded in $\overline{Q_0}$, there exists an increasing sequence of continuous functions $\psi_{j,\ell}$, $\ell\in\mathbb N$, converging pointwise to $u_j$ as $\ell\to\infty$. Let $u_{j,\ell}=R^{\psi_{j,\ell}}(Q_0)$ be the solution of the obstacle problem in $Q_0$ with obstacle $\psi_{j,\ell}$. Since $\psi_{j,\ell}\leq u_{j}$, and $u_{j,\ell}$ is the smallest supersolution above $\psi_{j,\ell}$ we have 
\begin{equation*}
    \psi_{j,\ell}\leq u_{j,\ell}\leq u_j,
\end{equation*}
which implies that $u_{j,\ell}$ converges pointwise to $u_j$ as $\ell\to\infty$. Since the obstacles form an increasing sequence, we conclude by \cite[Lemma 3.7]{KKS-Obstacle} that the sequence $(u_{j,\ell})_{\ell\in\N}$ is increasing. Let 
\begin{equation*}
    \mu_{j,\ell}= \partial_t u_{j,\ell} - \div \mathcal A(x,t,\nabla u_{j,\ell})
\end{equation*}
in sense of Definition \ref{def:solution-measure-data}.
From the Dominated Convergence Theorem and the convergence of gradients in Lemma \ref{lem:sequences-supersolutions}, we conclude that $\mu_{j,\ell}\to \mu_j$ weakly in $Q_0$, as $\ell\to\infty$. Let $v_{j,\ell}$ be the unique solution of \eqref{eq_main} with $v_{j,\ell}=u_{j,\ell}$ on $\partial_{\text{par}}Q_k$, whose existence follows from Lemma \ref{lem:existence-Cauchy-Dirichlet}. By the comparison principle in Lemma \ref{lem:standard-comparison}, the sequence $v_{j,\ell}$ is increasing and bounded. Hence, the pointwise limit $v_j$ exists and by Lemma \ref{lem:sequences-supersolutions}, it is a weak solution of \eqref{eq_main} in $Q_k$.
Applying \eqref{eq: comparison estimate on Hölder reg cylinders} to $u_{j,\ell}$ and $v_{j,\ell}$ yields 
\begin{equation*}
    \left(\dashint_{Q_k} \lvert u_{j,\ell}-v_{j,\ell}\rvert^{p-1}\dx\dt\right)^\frac{1}{p-1}\leq c \lambda r_k^\alpha \left(\frac{\mu_{j,\ell}(Q_k)}{\lambda r_k^{n+\alpha}}\right)^\frac{n+p}{(p-1)n+p}\,.
\end{equation*}
Letting $\ell\to\infty$ in the above, by the Dominated Convergence Theorem and the weak convergence of the measures, we obtain
\begin{equation}
\label{eq: comparison estimate for cutoff}
\begin{split}
    \left(\dashint_{Q_k} \lvert u_{j}-v_{j}\rvert^{p-1}\dx\dt\right)^\frac{1}{p-1}& \leq c \lambda r_k^\alpha \left(\frac{\mu_{j}(\overline{Q_k})}{\lambda r_k^{n+\alpha}}\right)^\frac{n+p}{(p-1)n+p} \\
    & \leq c \lambda r_k^\alpha \left(\frac{M}{\lambda }\right)^\frac{n+p}{(p-1)n+p} \\
    & \leq c \lambda r_k^\alpha \left(\frac{1}{C}\right)^\frac{n+p}{(p-1)n+p},
\end{split}
\end{equation}
where we used \eqref{eq: inequality of measure for approximation} in the penultimate inequality. We now turn to estimating the mean oscillations of $u_j$. By the quasiminimality of the mean value, we obtain that
\begin{equation}
\label{eq: estimating mean oscillations with comparison function}
\begin{split}
    \Bigg(\dashint_{Q_{k+1}} &\lvert u_j-(u_j)_{Q_{k+1}}\rvert^{p-1}\dx\dt\Bigg)^\frac{1}{p-1}  \leq  2\left(\dashint_{Q_{k+1}} \lvert u_j-(v_j)_{Q_{k+1}}\rvert^{p-1}\dx\dt\right)^\frac{1}{p-1} \\
    & \leq 2\left(\dashint_{Q_{k+1}} \lvert u_j-v_j\rvert^{p-1}\dx\dt\right)^\frac{1}{p-1} + 2\left(\dashint_{Q_{k+1}} \lvert v_j-(v_j)_{Q_{k+1}}\rvert ^{p-1}\dx\dt\right)^\frac{1}{p-1} \\
    & \leq 2\left(\frac{\lvert Q_k\rvert}{\lvert Q_{k+1}\rvert}\right)^\frac{1}{p-1}\left(\dashint_{Q_{k}} \lvert u_j-v_j\rvert^{p-1}\dx\dt\right)^\frac{1}{p-1}+ 2\osc_{Q_{k+1}} v_j\,.
\end{split}
\end{equation}
The first term on the right-most side of \eqref{eq: estimating mean oscillations with comparison function} has already been estimated in \eqref{eq: comparison estimate for cutoff}. Let us estimate the term $\osc_{Q_{k+1}} v_j$ in \eqref{eq: estimating mean oscillations with comparison function}. Applying \Cref{lemma: sup estimate for subsolutions} to the subsolutions $(v_j-(v_j)_{Q_k})_+$ and $((v_j)_{Q_k}-v_j)_+$ to \eqref{eq_main} with operators $\cA$ and $\Tilde{\cA}(x,t,\xi) = -\cA(x,t,-\xi)$, respectively, we obtain
\begin{equation}\label{eq:nov12-1}
\begin{split}
    \osc_{Q_{r_k/2,\lambda (r_k/2)^\alpha}} v_j
    &=
    \osc_{Q_{r_k/2,\lambda (r_k/2)^\alpha}} \left(v_j - (v_j)_{Q_k} \right)
    \\ &\leq
    \max \left( \sup_{Q_{r_k/2,\lambda (r_k/2)^\alpha}} \left(v_j - (v_j)_{Q_k} \right)_+, \sup_{Q_{r_k/2,\lambda (r_k/2)^\alpha}} \left( (v_j)_{Q_k} - v_j \right)_+ \right)
    \\ &\leq
    \frac{c}{(\lambda r_k^\alpha)^{p-2}}\dashint_{Q_k} \lvert v_j-(v_j)_{Q_k}\rvert^{p-1}\dx\dt +c\lambda r_k^\alpha\,.
\end{split}
\end{equation}
By the quasiminimality of the mean value and the triangle inequality, we have that
\begin{equation*}
\begin{split}
    \dashint_{Q_k} \lvert v_j-(v_j)_{Q_k}\rvert^{p-1}\dx\dt
    & \leq 2 \dashint_{Q_k} \lvert v_j-(u_j)_{Q_k}\rvert^{p-1}\dx\dt \\
    & \leq c\dashint_{Q_k} \lvert u_j-(u_j)_{Q_k}\rvert^{p-1}\dx\dt + c\dashint_{Q_k} \lvert u_j-v_j\rvert^{p-1}\dx\dt\,.
\end{split}
\end{equation*}
Substituting the last display into~\eqref{eq:nov12-1}, and applying the estimates~\eqref{eq: mean osc of cutoff from induc assump} and~\eqref{eq: comparison estimate for cutoff} we obtain
\begin{equation}
\label{eq: estimate for setup of intrinsic osc decay}
\begin{split}
    \osc_{Q_{r_k/2,\lambda (r_k/2)^\alpha}} v_j
    &\leq
    \frac{c}{(\lambda r_k^\alpha)^{p-2}}\left( \dashint_{Q_k} \lvert u_j-(u_j)_{Q_k}\rvert^{p-1}\dx\dt + \dashint_{Q_k} \lvert u_j-v_j\rvert^{p-1}\dx\dt\right) +\lambda r_k^\alpha
    \\ &\leq 
    \frac{c}{(\lambda r_k^\alpha)^{p-2}}\left( \lambda r_k^\alpha + \lambda r_k^\alpha \right)^{p-1}+\lambda r_k^\alpha \leq  c \lambda r_k^\alpha\,.
\end{split}
\end{equation}
Denote $\Tilde\omega \coloneqq c\lambda (r_k/2)^\alpha$, $\Tilde \lambda \coloneqq \Tilde C \Tilde \omega / (r_k/2)^\kappa$, where $0<\kappa\leq 1$ and $\Tilde C\geq 1$ are as in \eqref{eq: kappa hölder cont of sols}. Then, using that $\Tilde{C}^{2-p} \leq 1$ to increase the cylinder, \eqref{eq: estimate for setup of intrinsic osc decay} gives
\begin{equation}
    \osc_{ Q_{r_k/2,\Tilde{\lambda} (r_k/2)^\kappa}} v_j =  \osc_{Q_{r_k/2,C\lambda (r_k/2)^\alpha}} v_j \leq  \osc_{Q_{r_k/2,\lambda (r_k/2)^\alpha}} v_j \leq c\lambda r_k^\alpha=\Tilde\omega\,,
\end{equation}
and hence applying \eqref{eq: kappa hölder cont of sols} with the smaller radius $r_{k+1}$, we find that
\begin{equation*}
    \osc_{Q_{r_{k+1},\Tilde\lambda r_{k+1}^\kappa}} v_j \leq \Tilde\lambda r_{k+1}^\kappa\,.
\end{equation*}
Recalling the definition of $r_k,\Tilde\omega$ and $\Tilde\lambda$, we thus have
\begin{equation}
    \label{eq: result from intrinsic osc reduction}
    \osc_{Q_{r_{k+1},c \tau^{\kappa-\alpha} \lambda r_{k+1}^\alpha }} v_j \leq c \tau^{\kappa-\alpha} \lambda r_{k+1}^\alpha,
\end{equation}
where $c=c(n,p,\nu,L)$. Choose $\tau = \tau(n,p,\nu,L,\Tilde C, \kappa, \alpha)$ such that 
\begin{equation*}
    c\tau^{\kappa-\alpha}\leq \frac{1}{4}\,.
\end{equation*}
Then, using that $(c\tau^{\kappa-\alpha})^{2-p} \geq 4^{p-2} \geq 1$ to decrease the cylinder on the left-hand side, \eqref{eq: result from intrinsic osc reduction} implies 
\begin{equation*}
    \osc_{Q_{k+1}} v_j \leq \frac{1}{4}\lambda r_{k+1}^\alpha\,, 
\end{equation*}
which in conjunction with~\eqref{eq: estimating mean oscillations with comparison function} and~\eqref{eq: comparison estimate for cutoff} yields
\begin{equation*}
    \left(\dashint_{Q_{k+1}} \lvert u_j-(u_j)_{Q_{k+1}}\rvert^{p-1}\dx\dt\right)^\frac{1}{p-1} \leq c\left(\frac{\lvert Q_k\rvert}{\lvert Q_{k+1}\rvert}\right)^\frac{1}{p-1} \left(\frac{1}{C}\right)^\frac{n+p}{(p-1)n+p} \lambda r_k^\alpha+ \frac{1}{2}\lambda r_{k+1}^\alpha\,.
\end{equation*}
Letting $j\to \infty$ and choosing the constant $C=C(n,p,\nu,L,\Tilde C, \kappa ,\alpha)$ to be large enough so that  
\begin{equation*}
    c\left(\frac{\lvert Q_k\rvert}{\lvert Q_{k+1}\rvert}\right)^\frac{1}{p-1} \left(\frac{1}{C}\right)^\frac{n+p}{(p-1)n+p} \leq \frac{\tau^\alpha}{2}\,,
\end{equation*}
we obtain 
\begin{equation}
    \left(\dashint_{Q_{k+1}} \lvert u-u_{Q_{k+1}}\rvert^{p-1}\dx\dt\right)^\frac{1}{p-1} \leq \lambda r_{k+1}^\alpha\,.
\end{equation}
We thus conclude that \eqref{eq: induction for mean osc} holds for all $k\in\mathbb N $ by induction. Then, \eqref{eq: Hölder cont of meas data sol} follows by a Campanato-type argument, which we briefly outline for the convenience of the reader. We claim that 
\begin{equation}
    \label{eq: induction for pointwise osc}
    \osc_{Q_k} u  \leq \lambda r_k^\alpha
\end{equation}
for all $k \geq 1$ after making the constant $C$ in \eqref{eq: intrinsic parameter in osc of sol meas data} larger. This will imply that \eqref{eq: Hölder cont of meas data sol} holds with $\delta=\tau$ . We may assume without loss of generality that $z_0$ is a Lebesgue point of $u$. By \eqref{eq: induction for mean osc}, we have
\begin{equation}
\label{eq: campanato type estimation}
\begin{split}
    \lvert u(z_0)-u_{Q_k}\rvert & \leq \sum_{i=k}^\infty \lvert u_{Q_{i+1}}-u_{Q_i}\rvert
     \leq \sum_{i=k}^\infty \frac{\lvert Q_{i}\rvert}{\lvert Q_{i+1}\rvert}\dashint_{Q_i}\lvert u-u_{Q_i}\rvert\dx\dt \\
    & \leq c \sum_{i=k}^\infty \left(\dashint_{Q_i}\lvert u-u_{Q_i}\rvert^{p-1}\dx\dt\right)^\frac{1}{p-1}
     \leq c \lambda  \sum_{i=k}^\infty r_i^\alpha
     \leq c\lambda r_k^\alpha\,.
\end{split}
\end{equation}
Let $y \in Q_k$ be a Lebesgue point. Let $i \geq 1$, and denote
\begin{equation*}
    \widehat Q_i \coloneqq Q_{r_i,\lambda r_i^\alpha}(y)\,, \quad \Tilde Q \coloneqq Q_{R/2,\lambda (R/2)^\alpha}(y)\,.
\end{equation*}
Since $\Tilde{Q} \subset Q_0$, applying the quasiminimality of the mean value shows that
$$
    \left( \dashint_{\Tilde Q} |u - u_{\Tilde Q}|^{p-1}\dx \dt \right)^{\frac{1}{p-1}}
    \leq
    c\left( \dashint_{Q_0} |u - u_{Q_0}|^{p-1}\dx \dt \right)^{\frac{1}{p-1}}
    \leq
    c \omega.
$$
Therefore, \eqref{eq: induction for mean osc} holds after increasing $C$ in \eqref{eq: intrinsic parameter in osc of sol meas data}.
Hence, we may apply~\eqref{eq: induction for mean osc} to cylinders centered at $y$. In particular, by the same reasoning as in~\eqref{eq: campanato type estimation}, we can estimate $\lvert u(y)-u_{\widehat Q_{k-1}}\rvert \leq c\lambda r_k^{\alpha}$. Moreover, noting that $Q_k\subset \widehat Q_{k-1}$, we estimate 
\begin{equation}\label{eq:nov13-2}
\begin{split}
    \lvert u(y)-u_{Q_k}\rvert & \leq \lvert u(y)-u_{\widehat Q_{k-1}}\rvert + \lvert u_{\widehat Q_{k-1}}-u_{Q_k}\rvert \\
    & \leq c\lambda r_k^\alpha + c \left(\dashint_{\widehat Q_{k-1}}\lvert u - u_{\widehat Q_{k-1}}\rvert^{p-1}\dx\dt\right)^\frac{1}{p-1} \leq c \lambda r_k^\alpha\,,
\end{split}
\end{equation}
where in the last inequality we applied~\eqref{eq: induction for mean osc} to the cylinder $\widehat Q_{k-1}(y)$. By combining~\eqref{eq: campanato type estimation} and~\eqref{eq:nov13-2}, we finally obtain
\begin{equation*}
    \lvert u(z_0)-u(y)\rvert \leq \lvert u(z_0)-u_{Q_k}\rvert +\lvert u(y)-u_{Q_k}\rvert \leq c\lambda r_k^\alpha\,,
\end{equation*}
whichs shows \eqref{eq: induction for pointwise osc}, after making the constant $C$ in \eqref{eq: intrinsic parameter in osc of sol meas data} larger again. 
\end{proof}

The proof of \Cref{thm: necessity of Hausdorff zero} follows in a straightforward way from the previous results.
\begin{proof}[Proof of \Cref{thm: necessity of Hausdorff zero}]
Let $\mu$ be a measure as in \Cref{cor:parabolic-frostman}, with $\sigma=n+\alpha$. By Lemma \ref{lemma:exis_measure}, there exists an $\mathcal A$-superparabolic function $u$ in $\Omega_T$ satisfying~\eqref{eq: measure RHS}, which is a weak solution in $\Omega_T\backslash E$, since $\supp \mu\subset E$. By \Cref{thm: measure condition implies hölder cont}, $u$ is locally $\alpha$-Hölder continuous with respect to $d_\alpha$. To see this, let $z_0\in \Omega_T$ be a Lebesgue point of $u^{p-1}$ and let $C$ be as in \eqref{eq: intrinsic parameter in osc of sol meas data}. Let $0<R_0<1$ be such that $Q_{R_0,C R_0^\alpha}(z_0)\Subset\Omega_T$. Let $0<R<R_0$. We have $C\max(1,R^-\alpha)=CR^{-\alpha}$. Moreover, choosing $R$ small enough, since $z_0$ is a Lebesgue point, we have
\begin{equation*}
    \left(\dashint_{Q_{R, C R^\alpha}(z_0)} \lvert u-u_{Q_{R,C R^\alpha}(z_0)}\rvert^{p-1}\dx\dt\right)^\frac{1}{p-1} < 1,
\end{equation*}
and hence \eqref{eq: Hölder cont of meas data sol} together with a simple covering argument implies the local Hölder continuity. Finally, $u$ does not solve \eqref{eq_main} in $\Omega_T$, since $\mu\not \equiv 0$.
\end{proof}

\bigskip
\textbf{Acknowledgements.}
{We thank Iwona Chlebicka and Juha Kinnunen for helpful discussions, and Juha Kinnunen for proposing the problem.
The second and third authors thank the Department of Mathematics, Informatics, and Mechanics of the University of Warsaw for the hospitality during their research visits. The first and fourth authors would like to express sincere gratitude towards Aalto University for the kind hospitality during their research visits. All the authors would also like to thank the Initiative of Excellence at the University of Warsaw.}

\bigskip
\textbf{Funding.}
This research was funded in whole or in part by the Austrian Science Fund (FWF) project \emph{Evolutionary problems in noncylindrical domains}, grant doi 10.55776/J4853. The first author is supported by Polish Ministry of Science and Education grant PN/02/0001/2023. The second author has been supported by the Vilho, Yrjö and Kalle Väisälä Foundation. The fourth author is supported by the National Science Centre (NCN) Grant Sonata Bis 2019/34/E/ST1/00120.

\printbibliography
\end{document}